\def\sizethreebyone{0.72}
\def\sizetwobyone{0.48}
\def\sizeonebyone{0.24}
\newtheoremstyle{break}{\topsep}{\topsep}{\itshape}{}{\bfseries}{}{\newline}{}
\theoremstyle{break}
\newtheorem{proposition}{Proposition}[section]
\newtheorem{lemma}{Lemma}[section]
\newtheorem{remark}{Remark}[section]
\title{A numerical Fourier cosine expansion method with higher order Taylor schemes for fully coupled FBSDEs}
\author[1,2]{Balint Negyesi\thanks{Corresponding author. Email: \href{mailto:b.negyesi@proton.me}{b.negyesi@proton.me}.}} 
\author[2]{Cornelis W. Oosterlee}
\affil[1]{\small Delft Institute of Applied Mathematics (DIAM), Delft University of Technology}
\affil[2]{\small Mathematical Institute, Utrecht University}
\begin{document}
	\maketitle

    \begin{abstract}
        A higher-order numerical method is presented for scalar valued, coupled forward-backward stochastic differential equations. Unlike most classical references, the forward component is not only discretized by an Euler-Maruyama approximation but also by higher-order Taylor schemes. This includes the famous Milstein scheme, providing an improved strong convergence rate of order 1; and the simplified order 2.0 weak Taylor scheme exhibiting weak convergence rate of order 2. In order to have a fully-implementable scheme in case of these higher-order Taylor approximations, which involve the derivatives of the decoupling fields, we use the COS method built on Fourier cosine expansions to approximate the conditional expectations arising from the numerical approximation of the backward component. Even though higher-order numerical approximations for the backward equation are deeply studied in the literature, to the best of our understanding, the present numerical scheme is the first which achieves strong convergence of order 1 for the whole coupled system, including the forward equation, which is often the main interest in applications such as stochastic control. Numerical experiments demonstrate the proclaimed higher-order convergence, both in case of strong and weak convergence rates, for various equations ranging from decoupled to the fully-coupled settings.
    \end{abstract}
	
	\tableofcontents
	
		\section{Introduction}
	In this paper we are concerned with solving the following fully-coupled system of forward-backward stochastic differential equations (FBSDE)\begin{subequations}\label{eq:fbsde}\noeqref{eq:fbsde:sde, eq:fbsde:bsde}
        \begin{align}
		X_t &= x_0 + \int_0^t \mu(s, X_s, Y_s, Z_s)\mathrm{d}s + \int_0^t \sigma(s, X_s, Y_s, Z_s)\mathrm{d}W_s,\label{eq:fbsde:sde}\\
		Y_t &= g(X_T) + \int_t^T f(s, X_s, Y_s, Z_s)\mathrm{d}s - \int_t^T Z_s\mathrm{d}W_s,\label{eq:fbsde:bsde}
	\end{align}
    \end{subequations}
	where all coefficients are deterministic, scalar valued functions, $T>0$ is a finite time horizon, and $\{W_t\}_{t\in[0, T]}$ is a standard Brownian motion over an appropriate probability space.
    Such equations have an innate connection with second-order, parabolic, quasi-linear PDEs with terminal boundaries of the following form
	\begin{align}\label{eq:pde}
        \begin{split}
            \partial_t u(t, x) + \frac{1}{2}\widetilde{\sigma}^2(t, x, u, \partial_x u)\partial_{xx}^2 u(t, x) + \mu(t, x, u, \partial_x u \sigma) \partial_x u(t, x) +f(t, x, u, \partial_x u\sigma)&=0,\\
		u(T, x) &= g(x),
        \end{split}
	\end{align}
    where $\widetilde{\sigma}$ is connected to $\sigma$ and the coefficients $\mu, f, g$ coincide with those in \eqref{eq:fbsde} -- see e.g. \cite{zhang_backward_2017, fromm_theory_2015}. In particular, due to non-linear generalizations of the Feynman-Kac formula, the solution of \eqref{eq:pde} is related to the solution triple of \eqref{eq:fbsde} by the following relations
    \begin{align}\label{eq:feynman-kac}
        Y_t=u(t, X_t),\quad Z_t=\partial_x u(t, X_t)\sigma(t, X_t, u(t, X_t))\eqqcolon v(s, X_s),
    \end{align}
    at least in the case when the diffusion coefficient does not depend on $Z$
    \cite{pardoux_backward_1992, zhang_backward_2017}.

    Fully-coupled FBSDEs naturally arise in applications of stochastic control, \cite{yong_stochastic_1999, pham_continuous-time_2009}. Solving such equations analytically is seldom possible in an analytical fashion and one most often has to resort to numerical approximations instead. The literature in the so called \emph{decoupled} framework, i.e. when the coefficients $\mu, \sigma$ do not depend on $Y$ and $Z$ are thoroughly studied, see e.g. \cite{bouchard_discrete-time_2004}. However, things get more complicated in the \emph{coupled} setting, i.e. when the coefficients $\mu, \sigma$ in \eqref{eq:fbsde:sde} take $Y, Z$ as arguments. Due to the nature of the coupling, the forward simulation of \eqref{eq:fbsde:sde} is not straightforward, as one needs to have approximated the backward equation's solution pair \eqref{eq:fbsde:bsde} in order to do so beforehand.    
    Several approaches have been proposed in the past decades, starting with the famous \emph{four step scheme} \cite{ma_solving_1994}, built on the connection with the associated quasi-linear PDEs \eqref{eq:feynman-kac}. Without the sake of completeness, we mention \cite{bender_time_2008, milstein_numerical_2006, cvitanic_steepest_2005, ma_forward-backward_2007, zhang_backward_2017} and the references therein.

    Classical techniques include Monte Carlo techniques, branching diffusions, quantization algorithms \cite{delarue_forward_2006}, Fourier cosine expansions. In particular, starting with the decoupled setting Ruijter and Oosterlee \cite{ruijter_fourier_2015} proposed an algorithm for the numerical resolution of decoupled FBSDEs, built on the COS method \cite{fang_novel_2009}, where the corresponding conditional expectations are approximated by means of Fourier cosine expansions enabled by the fact that, at least in the Markovian framework, the \emph{characteristic function} of the transition between two time steps is known in closed form, given a suitable discretization of \eqref{eq:fbsde:sde}.
    In their original work, the forward diffusion was assumed to be a standard Arithmetic Brownian motion, giving pathwise analytical solutions for \eqref{eq:fbsde:sde}, eliminating the need for discrete time approximations. Later on, they extended this framework in \cite{ruijter_numerical_2016} to more general, state and time dependent drift and diffusion coefficients, up to discretizations by second-order Taylor schemes, including the \emph{Euler-Maruyama} and \emph{Milstein} schemes. In case of the former, their algorithm exhibited a strong convergence rate of order $1/2$, whereas in case of the latter, this was improved to order $1$. Nevertheless, these extensions were given in the decoupled setting, i.e. $Y, Z$ did not enter the drift and diffusion coefficients in \eqref{eq:fbsde:sde}.
    
    In order to remedy this,  following their ideas, Huijskens et al. in \cite{huijskens_efficient_2016} extended their algorithm to the coupled FBSDE framework of \eqref{eq:fbsde}. Therein, the forward and backward equations are first discretized by order $1/2$ Euler schemes, and are subsequently decoupled by suitable \emph{decoupling fields} which approximate the deterministic mappings in \eqref{eq:feynman-kac}. Thereafter, they proposed three different strategies for finding the \textit{optimal} decoupling fields: the so called \emph{explicit method}, in which throughout the backward recursion, the decoupling relations at time $t_n$ are replaced by the already computed approximations at time step $t_{n+1}$; a \emph{local method} where the decoupling relations are obtained via Picard iterations at each time step; and a \emph{global method} where Picard iterations take place simultaneously, at all points in time similar to \cite{bender_time_2008}. They found that the explicit method, inspired by \cite{delarue_forward_2006}, is the most robust, giving strong convergence rate of order $1/2$, even in the case of fully-coupled systems, where the diffusion coefficient $\sigma$ depends also on $Z$, as in \eqref{eq:fbsde:sde}.

    The purpose of the present paper is to combine \cite{ruijter_numerical_2016} with \cite{huijskens_efficient_2016} for the coupled system \eqref{eq:fbsde}. Doing so, we extend the COS approximations of \cite{huijskens_efficient_2016} to higher-order discretizations for fully-coupled FBSDE systems, such that strong convergence rates of order $1$ and weak convergence rates of order $2$ can be achieved. In particular, we generalize the discretization of the forward diffusion in \eqref{eq:fbsde:sde} to second-order Taylor schemes , see e.g. \cite{kloeden_numerical_1992}, including the \emph{Milstein}- and \emph{simplified order 2.0 weak Taylor} schemes. Subsequently, in order to preserve the higher-order convergence rates in the forward diffusion, we solve the corresponding backward recursions in the approximations of the backward equation by appropriate second-order schemes \cite{zhao_generalized_2012, crisan_second_2014}, in order to obtain a suitable pair of decoupling fields \eqref{eq:feynman-kac}. In this sense, our work is a generalization of \cite{ruijter_numerical_2016}, extending the approaches proposed therein from the decoupled to the fully-coupled setting.
    The main challenge in doing so, is that higher-order corrections terms involve \emph{derivatives} of the decoupling fields in \eqref{eq:feynman-kac}. In order to remedy this, we capitalize on the fact that COS approximations are infinitely differentiable, and given an \emph{explicit} scheme, these derivatives can be computed directly at each step in the backward recursion. This enables higher-order discretization schemes, up to strong convergence rates of order $1$ with a \emph{Milstein}-, and weak convergence rates of order $2$ with a \emph{simplified order 2.0 weak Taylor} scheme, when the characteristic function of the corresponding Markov transition is still available in closed form.

    The rest of the paper is organized as follows. Section \ref{sec:discretization} is devoted to the discrete time approximations of \eqref{eq:fbsde:sde}. After introducing key notations and theoretical concepts, general, second-order schemes are formulated for the forward component \eqref{eq:fbsde:sde}, followed by the standard, recursive sequence of conditional expectations related to the solution pair of the backward equation \eqref{eq:fbsde:bsde}. In order to make these discrete time approximations fully-implementable, section \ref{sec:bcos} explains the BCOS method in the coupled setting. Our main results are given in section \ref{sec:numerical}, where numerical experiments are presented for a selection of coupled FBSDE systems, exhibiting the proclaimed rate of strong and weak convergence rates for a wide range of problems.

    \section{Discrete time approximations of forward-backward stochastic differential equations}\label{sec:discretization}
    Throughout the paper we are working on a filtered probability space $(\Omega, \mathcal{F}=\cap_{t\in [0, T]} \mathcal{F}_t, \mathbb{P})$, where $\mathcal{F}_t$ is the natural filtration generated by the Brownian motion $\{W_t\}_{t\in[0, T]}$ augmented with the usual $\mathbb{P}$-null sets. 
    The solution of \eqref{eq:fbsde} is a triple of predictable stochastic processes $\{(X_t, Y_t, Z_t)\}_{t\in[0, T]}$ such that \eqref{eq:fbsde} is satisfied $\mathbb{P}$ almost surely, and the processes satisfy the following integrability conditions
    \begin{align}
        \mathbb{E}\left[\sup_{t\in [0, T]}\abs{X_t}^2\right]+ \mathbb{E}\left[\sup_{t\in [0, T]}\abs{Y_t}^2\right]+\mathbb{E}\left[\int_0^T \abs{Z_s}^2\mathrm{d}s\right]<\infty.
    \end{align}
    The well-posedness of \eqref{eq:fbsde} has been deeply studied, and established under by now classical assumptions, see e.g. \cite{zhang_backward_2017, pardoux_backward_1992}. For the rest of paper we consider equations such that any of such conditions is satisfied, and assume that \eqref{eq:fbsde} has a unique solution triple $\{(X_t, Y_t, Z_t)\}_{t\in[0, T]}$.

    In what follows, the following standard notations are used. Conditional expectations at $t_n\in[0, T]$ are denoted by $\mathbb{E}_n[\cdot]\coloneqq \mathbb{E}[\cdot\vert \mathcal{F}_{t_{n}}]$. For a function $f: [0, T]\times \mathbb{R}\times \mathbb{R}\times\mathbb{R}\to \mathbb{R}$, we put $\partial_t f$ for the partial derivative in time, and $\partial_x f, \partial_y f, \partial_z f$ for the corresponding spatial derivatives. Second-order derivatives are denoted by $\partial_{yx}^2 f=\partial_y(\partial_x f)$, and analogously for all other partial derivatives. Given a function $f: (t, x, y, z)\mapsto f$, and two deterministic mappings $\varphi: x\mapsto \varphi$, $\zeta: x\mapsto \zeta$, we put $\bar{f}^{\varphi, \zeta}(t, x)\coloneqq f(t, x, \varphi(x), \zeta(x))$, consequently
    \begin{align}\label{eq:total_derivative_in_x}
        \partial_x \bar{f}^{\varphi, \zeta}(t, x)\equiv \partial_x f(t, x, \varphi(x), \zeta(x)) + \partial_y f(t, x, \varphi(x), \zeta(x))\partial_x \varphi(x)+\partial_z f(t, x, \varphi(x), \zeta(x))\partial_x \zeta(x).
    \end{align}
    For the rest of the paper, $\pi\coloneqq \{0=t_0<t_1<\dots<t_N=T\}$, denotes a partitioning of $[0, T]$, with $\Delta t_n\coloneqq t_{n+1}-t_{n}$, and $|\pi|\coloneqq \max_{n=0, \dots, N-1} \Delta t_n$. The Brownian increments between two adjacent time points are given by $\Delta W_n\coloneqq W_{t_{n+1}} - W_{t_{n}}$. For an equidistant time grid we use $h\coloneqq T/N$, $t_n=nh$, $n=0, \dots, N-1$. We will first discuss the discrete time approximations of the system \eqref{eq:fbsde} that are investigated in this paper.

    \subsection{Forward discretization}
    The literature on numerical approximations for forward stochastic differential equations is vast, for a classical reference we refer to \cite{kloeden_numerical_1992}. However, in case of coupled FBSDEs as in \eqref{eq:fbsde:sde} numerical approximations become more intricate due to the coefficient functions $\mu$ and $\sigma$ depending on not just the state process $X$, but also the solution pair of the backward equation $(Y, Z)$ in \eqref{eq:fbsde:bsde}. In order to enable the numerical approximation of forward equation, one first needs to \emph{decouple} \eqref{eq:fbsde:sde} from \eqref{eq:fbsde:bsde}. Motivated by the Feynman-Kac formulae in \eqref{eq:feynman-kac}, the standard technique is to construct a so called \emph{decoupling field}, which is a deterministic mapping of $X$ at each point in time. In fact, the Markov property of the solution pair of \eqref{eq:fbsde:bsde} together with \eqref{eq:feynman-kac} imply the existence of two \emph{deterministic} mappings of time and state, such that the solution to the following forward SDE
    \begin{align}\label{eq:sde:decoupled}
        X_t^{u, v} = x_0 + \int_{0}^t \mu(s, X_s^{u, v}, u(s, X_s^{u, v}), v(s, X_s^{u, v}))\mathrm{d}s + \int_0^t \sigma(s, X_s^{u, v}, u(s, X_s^{u, v}), v(s, X_s^{u, v}))\mathrm{d}W_s
    \end{align}
    coincides with that of \eqref{eq:fbsde:sde} pathwise, almost surely. Therefore, given the true decoupling fields $u, v$, one can construct a \emph{decoupled} FBSDE system consisting of \eqref{eq:sde:decoupled} and \eqref{eq:fbsde:bsde}, and proceed with standard approximations techniques developed for the decoupled framework in order to numerically solve the entire system in \eqref{eq:fbsde}.
    The main difficulty in the approach described above is that the decoupling relations $u, v$ are not known in advance, one has to provide numerical approximations of them in the discrete time framework, combined with the numerical approximation of the backward equation \eqref{eq:fbsde:bsde}.
    
    To this end, let us fix two adjacent points $t_n, t_{n+1}$ in the time partition and a decoupling pair $\varphi, \zeta: \mathbb{R}\to\mathbb{R}$.
    Following standard techniques of It\^{o}-Taylor expansions, see e.g. \cite{ruijter_numerical_2016} and the references therein, one can approximate the solution $\{X_t^{u, v}\}_{t\in (t_n, t_{n+1}]}$ of \eqref{eq:sde:decoupled} by the second-order Taylor expansions of the general form
    \begin{align}\label{eq:sde:general_second_order_scheme}
        X_t^{t_n, x, \varphi, \zeta} = x + \bar{m}^{\varphi, \zeta}(t, x)(t-t_n) + \bar{s}^{\varphi, \zeta}(t_n, x)(W_{t}-W_{t_{n}}) + \bar{\kappa}^{\varphi, \zeta}(t_n, x)(W_{t}-W_{t_{n}})^2,\quad X_{t_{n}}^{u, v}=x,
    \end{align}
    for $t\in(t_n, t_{n+1}]$, given $\varphi(x), \zeta(x)$ are accurate approximations of $u(t_n, x)$ and $v(t_n, x)$, respectively, for any $x$. We use the notation $X_{t_{m}}^{t_n, x, \varphi, \zeta}=X_m^{\pi, \varphi, \zeta}$ for $m=n, n+1$. The general second-order approximation in \eqref{eq:sde:general_second_order_scheme} includes famous discretization schemes such as the Euler-Maruyama scheme defined by
    \begin{align}\label{eq:coefficients:euler}
        \bar{m}^{\varphi, \zeta}(t, x)= \bar{\mu}^{\varphi, \zeta}(t, x),\quad \bar{s}^{\varphi, \zeta}(t, x)=\bar{\sigma}^{\varphi, \zeta}(t, x),\quad \bar{\kappa}^{\varphi, \zeta}(t, x)=0;
    \end{align}
    the Milstein scheme defined by
    \begin{align}\label{eq:coefficients:milstein}
        \bar{m}^{\varphi, \zeta}(t, x)= \bar{\mu}^{\varphi, \zeta}(t, x) - \bar{\kappa}^{\varphi, \zeta}(t, x),\quad \bar{s}^{\varphi, \zeta}(t, x)=\bar{\sigma}^{\varphi, \zeta}(t, x),\quad \bar{\kappa}^{\varphi, \zeta}(t, x)=\bar{\sigma}^{\varphi, \zeta}(t, x)\partial_x \bar{\sigma}^{\varphi, \zeta}(t, x)/2;
    \end{align}
    or the \emph{simplified order 2.0 weak Taylor} scheme given by
    \begin{align}\label{eq:coefficients:2.0-weak-taylor}
            \begin{split}
                \bar{m}^{\varphi, \zeta}(t, x)&=\begin{aligned}[t]
                    &\bar{\mu}^{\varphi, \zeta}(t, x)-\bar{\sigma}^{\varphi, \zeta}(t, x)\partial_x \bar{\sigma}^{\varphi, \zeta}(t, x)/2\\
                    &+ \big(\partial_t \bar{\mu}^{\varphi, \zeta}(t, x)+\bar{\mu}^{\varphi, \zeta}(t, x)\partial_x\bar{\mu}^{\varphi, \zeta}(t, x)+\partial_{xx}^2\bar{\mu}^{\varphi, \zeta}(t, x)(\bar{\sigma}^{\varphi, \zeta}(t, x))^2/2\big)\Delta t_n/2
                \end{aligned}\\
                \bar{s}^{\varphi, \zeta}(t, x)&=\begin{aligned}[t]
                    &\bar{\sigma}^{\varphi, \zeta}(t, x)\\
                    &\begin{aligned}[t]
                        + \big(\partial_x \bar{\mu}^{\varphi, \zeta}(t, x)\bar{\sigma}^{\varphi, \zeta}(t, x)+\partial_t \bar{\sigma}^{\varphi, \zeta}(t, x)+\bar{\mu}^{\varphi, \zeta}(t, x)\partial_x \bar{\sigma}^{\varphi, \zeta}(t, x)&\\
                        +\partial_{xx}^2\bar{\sigma}^{\varphi, \zeta}(t, x)(\bar{\sigma}^{\varphi, \zeta}(t, x))^2/2&\big)\Delta t_n/2
                    \end{aligned}
                \end{aligned}\\
                \bar{\kappa}^{\varphi, \zeta}(t, x)&=\bar{\sigma}^{\varphi, \zeta}(t, x)\partial_x \bar{\sigma}^{\varphi, \zeta}(t, x)/2.
            \end{split}
    \end{align}
    It is well-known, see e.g. \cite{kloeden_numerical_1992}, that under standard assumptions the classical discretization schemes above converge to the continuous solution of \eqref{eq:sde:decoupled} and thus also that of \eqref{eq:fbsde:sde}, with strong and weak convergence rates as in table \ref{tab:convergence-rates},
    \begin{table}[t]
        \centering
        \begin{tabular}{l|ccc}
             &  Euler-Maruyama & Milstein & simplified order 2.0 weak Taylor\\
             \hline
            weak & $1$ & $1$ & $2$\\
            strong & $1/2$ & $1$ & $1$
        \end{tabular}
        \caption{Weak and strong convergence rates of \eqref{eq:sde:general_second_order_scheme} given different discretization schemes}
        \label{tab:convergence-rates}
    \end{table}
    where the convergence rates are defined as follows
    \begin{align}
        \sup_{t\in [0, T]}(\mathbb{E}[|X_t^{u, v} - X_t^{\pi, \varphi, \zeta}|^2])^{1/2}\leq C\abs{\pi}^{\gamma_s}, \quad |\mathbb{E}[p(X_t^{u, v})] - \mathbb{E}[p(X_t^{\pi, \varphi, \zeta})]|\leq C\abs{\pi}^{\gamma_w}.
    \end{align}
    In above $p(x)$ is any $2(\gamma_w+1)$ continuously differentiable function of polynomial growth, and $C$ is a generic constant independent of $\abs{\pi}$. Strong convergence implies convergence in probability to the true solution of \eqref{eq:sde:decoupled}, whereas weak convergence provides relevant information about the solution at $t=0$ whenever the initial condition of the forward SDE is deterministic, which in many financial applications is of  special relevance.

    \begin{remark}
        Many higher-order Taylor expansions similar to \eqref{eq:sde:general_second_order_scheme} could be considered such as the \emph{order 1.5 strong Taylor} -- see e.g. \cite{kloeden_numerical_1992} --, including more terms of the corresponding It\^{o}-Taylor expansion, involving powers of the Brownian increment $W_{t}-W_{t_{n}}$ which are higher than $2$. The main reason why we restrict our further analysis to second-order schemes of the form \eqref{eq:sde:general_second_order_scheme} is due to the COS approximations that follow. In fact, when the highest power of the Brownian increment in \eqref{eq:sde:general_second_order_scheme} is at most $2$, the corresponding Markov transition's characteristic function can be computed in closed form -- see lemma \ref{lemma:chf} below -- which is a key component of the COS method applied hereafter. However, for schemes of order higher than $2$ this property no longer holds, and in order to be able to compute the COS formula, one would first have to numerically approximate the corresponding characteristic function.
    \end{remark}

    \subsection{Backward discretizations}
    Let us turn to the discrete time approximations of the backward equation \eqref{eq:fbsde:bsde}. In light of \eqref{eq:sde:decoupled}, we fix a decoupling field $(\varphi, \zeta)$ and consider the corresponding decoupled BSDE defined by
    \begin{align}\label{eq:bsde:decoupled}
        Y_t^{\varphi, \zeta} = g(X_T^{\varphi, \zeta}) + 
        \int_t^T f(s, X_s^{\varphi, \zeta}, Y_s^{\varphi, \zeta}, Z_s^{\varphi, \zeta})\mathrm{d}s - \int_t^T Z_s^{\varphi, \zeta}\mathrm{d}W_s,
    \end{align}
    which coincides with the solution pair of \eqref{eq:fbsde:bsde} when $\varphi=u, \zeta=v$, established by \eqref{eq:feynman-kac}.
    Given that the objective is to formulate a higher-order scheme in the fully-coupled setting, e.g. without knowing an exact decoupling pair $(u, v)$, it is fundamental that the corresponding discrete time approximations of \eqref{eq:bsde:decoupled} admit a higher-order convergence rate than that of the classical backward Euler scheme of \cite{bouchard_discrete-time_2004}. In what follows we consider two second-order schemes from the BSDE literature.
    Zhao et al. in \cite{zhao_generalized_2012} proposed a generalized theta-scheme which approximates the Markovian solution pair of \eqref{eq:bsde:decoupled} by the following backward recursion of conditional expectations
    \begin{align}\label{eq:zhao_scheme}
    \begin{split}
        y(t_N, x)&=g(x),\quad z(t_N, x)=\partial_x g(x)\sigma(t_N, y(t_N, x), z(t_N, x)),\\
        z(t_n, x) &= \frac{1}{\theta_3 \Delta t_n}\begin{aligned}[t]
            \big(&\theta_4 \Delta t_n \mathbb{E}_n^x [z(t_{n+1}, X_{n+1}^{\pi, \varphi, \zeta})] + (\theta_3-\theta_4)\mathbb{E}_n^x[y(t_{n+1}, X_{n+1}^{\pi, \varphi, \zeta})\Delta W_n]\\
            &+(1-\theta_2)\Delta t_n \mathbb{E}_n^x[f(t_{n+1}, y(t_{n+1}, X_{n+1}^{\pi, \varphi, \zeta}), z(t_{n+1}, X_{n+1}^{\pi, \varphi, \zeta}))\Delta W_n]\big),
        \end{aligned}\\
        y(t_n, x)&=\begin{aligned}[t]
            &\mathbb{E}_n^x[y(t_{n+1}, X_{n+1}^{\pi, \varphi, \zeta}) + (1-\theta_1)\Delta t_n f(t_{n+1}, y(t_{n+1}, X_{n+1}^{\pi, \varphi, \zeta}), z(t_{n+1}, X_{n+1}^{\pi, \varphi, \zeta}))]\\
            &+ \theta_1\Delta t_n f(t_{n}, y(t_{n}, x), z(t_{n}, x)),
        \end{aligned}
    \end{split}
    \end{align}
    with $\theta_1, \theta_2\in [0, 1]$, $\theta_3\in(0, 1]$, $\abs{\theta_4}\leq \theta_3$ -- see also their related works in  \cite{zhao_new_2006, zhao_error_2009}.
    We remark that the generalized theta-scheme in \eqref{eq:zhao_scheme} includes many classical discretization schemes such as the (implicit) backward Euler scheme of Bouchard and Touzi \cite{bouchard_discrete-time_2004} with $\theta_1=\theta_2=\theta_3=1, \theta_4=0$; or the theta-scheme considered in \cite{ruijter_fourier_2015, huijskens_efficient_2016} with $\theta_2=\theta_3, \theta_4=\theta_3-1$ and also the one of \cite{ruijter_numerical_2016} with the extra condition $\theta_1=\theta_2$. In \cite{zhao_generalized_2012} the authors show that the generalized theta-scheme in \eqref{eq:zhao_scheme} has a strong convergence rate of order $2$ in the decoupled framework, when $\theta_1=\theta_2=\theta_3=1/2$ and $\theta_4\leq \theta_3$, given that the underlying forward diffusion is a Brownian motion, i.e. $\mu(t, x, y, z)=0, \sigma(t, x, y, z)=1$ in \eqref{eq:sde:decoupled}, and the coefficients of the BSDE $g, f$ are sufficiently smooth with bounded derivatives. This makes the scheme \eqref{eq:zhao_scheme} a suitable choice for the second-order approximation of \eqref{eq:fbsde}, as using the estimates in \eqref{eq:zhao_scheme} for the decoupling of \eqref{eq:sde:general_second_order_scheme} induces errors that scale with order of at most 2, preserving the convergence rates of a second-order Taylor scheme.

    Alternatively, Crisan and Manolarakis in \cite{crisan_second_2014} proposed a second-order discretization for decoupled FBSDEs, defined by the following backward recursion
    \begin{align}\label{eq:crisan_scheme}
    \begin{aligned}[t]
        y(t_N, x)&=g(x),\quad z(t_N, x)=\partial_x g(x),\\
        z(t_n, x) &= \begin{aligned}[t]
            &\frac{4\Delta t_n + 6t_n}{(\Delta t_n)^2}\mathbb{E}_n^x[(y(t_{n+1}, x) + \Delta t_n f(X_{n+1}^{\pi, \varphi, \zeta}, y(t_{n+1}, X_{n+1}^{\pi, \varphi, \zeta}), z(t_{n+1}, X_{n+1}^{\pi, \varphi, \zeta}))\Delta W_n]\\
            &-\frac{6}{(\Delta t_n)^2}\mathbb{E}_n^x\big[\big(\int_{t_{n}}^{t_{n+1}} s\mathrm{d}W_s\big)(y(t_{n+1}, x) + \Delta t_n f(X_{n+1}^{\pi, \varphi, \zeta}, y(t_{n+1}, X_{n+1}^{\pi, \varphi, \zeta}), z(t_{n+1}, X_{n+1}^{\pi, \varphi, \zeta}))\big],
        \end{aligned}\\
        y(t_n, x) &= \begin{aligned}[t]
            &\mathbb{E}_n^x[y(t_{n+1}, X_{n+1}^{\pi, \varphi, \zeta}) + \Delta t_n/2 f(X_{n+1}^{\pi, \varphi, \zeta}, y(t_{n+1}, X_{n+1}^{\pi, \varphi, \zeta}), z(t_{n+1}, X_{n+1}^{\pi, \varphi, \zeta})] \\&+ \Delta t_n/2 f(x, y(t_{n}, x), z(t_{n}, x).
        \end{aligned}
    \end{aligned}
    \end{align}
    In \cite{crisan_second_2014}, the authors prove second order convergence in $|\pi|$ for \eqref{eq:crisan_scheme} under sufficiently smooth coefficients, generalizing second-order convergence beyond Brownian noise. Furthermore, they show that whenever those assumptions are not satisfied, their scheme still preserves the strong convergence rate of the backward Euler scheme of order $1/2$.

    We note that the discretization of $Y$ is always implicit in \eqref{eq:crisan_scheme} and also in \eqref{eq:zhao_scheme} whenever $\theta_1>0$.

    \paragraph{Single versus multi-step backward schemes.}
    It is important to mention that higher-order discretizations of BSDEs have been thoroughly studied, and many important results have been established in this regard. In fact, besides the two one-step schemes mentioned above, many multi-step approaches have been developed over the past decades, which may guarantee higher order convergence rates. Without the sake of completeness, we mention \cite{chassagneux_linear_2014, chassagneux_rungekutta_2014, zhao_stable_2010, zhao_numerical_2014} in the decoupled setting, and \cite{zhao_new_2014, teng_high-order_2021} for the coupled framework. In case of the latter two, one can constuct a multi-step discrete time approximation scheme for the BSDE \eqref{eq:fbsde:bsde} which reads as follows
    \begin{align}
        y(t_N, x)&=g(x),\quad z(t_N, x)=\partial_x g(x)\sigma(t_N, x),\\
        z(t_n, x)&=\sum_{j=1}^{k}\alpha_{j}^k\mathbb{E}_n^x[(W_{t_{n+j}} - W_{t_{n}})y(t_{n+j}, X_{n+j}^\pi)]\\
        y(t_n, x)&=\frac{1}{\alpha_0^k}\big(-\sum_{j=1}^k \alpha_j^k \mathbb{E}_{n}^x[y(t_{n+j}, X_{n+j}^\pi)] - f(t_n, x, y(t_n, x), z(t_n, x))\big),
    \end{align}
    where the $\{\alpha_j^k\}_{j=0, \dots, k}$ are known explicitly for every $k=1, \dots, 6$. The scheme is supposed to converge with a rate of $\mathcal{O}(\abs{\pi}^k)$, at least in the weak sense. The main difference between the thereby proposed multi-step approaches and that of the present paper is twofold.
    First, the $k$-step multi-step schemes are not immediately implementable without appropriate approximations for the first $Y, Z$ at the first $k$ time steps closest to $T$. In other words, one has to compute $y(t_{N-j}, x), z(t_{N-j}, x)$ for $j=1, \dots, k$ on either a finer time grid/using a higher-order FBSDE method, such as e.g. \cite{zhao_generalized_2012, crisan_second_2014}, in order to be able to keep the same rate of convergence. Second, all multi-step schemes mentioned above require the numerical approximation of $k$ conditional expectations with transitions between time step $t_n$ and $t_{n+j}, j=1, \dots, k$. As we shall see in the next section, in the context of the COS method this would imply the computation of the transition matrix $\Phi_n$ in \eqref{def:Phi}, $k$ times for each time step, which is the computationally most expensive part of the algorithm proposed therein. Therefore, a one-step scheme such as \eqref{eq:zhao_scheme}-\eqref{eq:crisan_scheme} is computationally preferable compared to multi-step alternatives when the spatial approximations are given by the COS method.
    Finally, and most importantly, the multi-step approximations for coupled FBSDEs in \cite{zhao_new_2014, zhao_numerical_2014, teng_high-order_2021} are only providing higher-order convergence of the backward equation's solution pair $Y, Z$ in \eqref{eq:fbsde:bsde}, and not that of \eqref{eq:fbsde:sde} which is still discretized by an Euler-Maruyama scheme. In other words, the somewhat surprising conclusion of \cite{zhao_new_2014, teng_high-order_2021} is that higher-order convergence of $Y, Z$ can be achieved even with a lower order scheme for the forward diffusion. In some applications, such as option pricing, this is sufficient as the main interest is the solution of the backward equation. However, in many other applications, e.g. stochastic optimal control, the forward diffusion is the main quantity of interest, in which case one wants to provide a higher-order scheme for the whole system in \eqref{eq:fbsde}, making higher-order approximations for the forward diffusion inevitable.

    For the reasons above, we discretize the backward equation by the one-step schemes \eqref{eq:zhao_scheme}-\eqref{eq:crisan_scheme}. Both discrete time approximation scheme are only implementable given a machinery which approximates the conditional expectations on the left hand side. In our case, this will be done by the COS method, explained in the upcoming section.

	\section{COS approximations}\label{sec:bcos}

    The COS method originally proposed in \cite{fang_novel_2009} is a Fourier based method to approximate conditional expectations, given that the underlying randomness is generated by a Markov transition whose characteristic function is known in (semi-)analytical closed form. 
    In what follows we fix $t_n$, and the corresponding decoupling fields $\varphi, \zeta$ in \eqref{eq:sde:decoupled}. Then, we are interested in the Markov transition $X_n^{\pi, \varphi, \zeta}=x\mapsto X_{n+1}^{\pi, \varphi, \zeta}$ defined by the following second-order Taylor scheme
    \begin{align}\label{eq:sde:discretization:decoupled}
		X_{n+1}^{\pi, \varphi, \zeta} = x + \bar{m}^{\varphi, \zeta}(t_n, x)\Delta t_n + \bar{s}^{\varphi, \zeta}(t_n, x)\Delta W_n + \bar{\kappa}^{\varphi, \zeta}(t_n, x)(\Delta W_n)^2.
	\end{align}
    The COS approximation of a conditional expectation of some function of $X_{n+1}^{\pi, \varphi, \zeta}$ then reads as follows
    \begin{align}\label{eq:cos_approximation:type1}
        \mathbb{E}_n^x[v(t_{n+1}, X_{n+1}^{\pi, \varphi, \zeta})]\approx \sideset{}{'}\sum_{k=0}^{K-1} \mathcal{V}_k(t_{n+1})\Re{\phi_{{X}_{n+1}^{\pi, \varphi, \zeta}}(k\pi/(b-a)|t_n, x)\exp(-ik\pi a/(b-a))},
    \end{align}
    where $\phi_{{X}_{n+1}^{\pi, \varphi, \zeta}}$ is the \emph{characteristic function} of $X_{n+1}^{\pi, \varphi, \zeta}$ and the \emph{Fourier cosine expansion coefficients} are defined by
    \begin{align}\label{eq:def:cosine_expansion_coefficients}
        \mathcal{V}_k(t_{n+1})&\coloneqq \frac{2}{b-a}\int_{a}^b v(t_{n+1}, x)\cos(\frac{k\pi}{b-a}(x-a))\mathrm{d}x,\\ v(t_{n+1}, x)&=\sideset{}{'}\sum_{k=0}^{\infty} \mathcal{V}_k(t_{n+1})\cos(\frac{k\pi}{b-a}(x-a)),
    \end{align}
    and the notation $\sum'$ means that the $0$'th term in the summation is halved.
 
    For any given triple $\bar{m}^{\varphi, \zeta}, \bar{s}^{\varphi, \zeta}, \bar{\kappa}^{\varphi, \zeta}$, the following lemma, see \cite[lemma 3.1]{ruijter_numerical_2016}, establishes an explicit, closed form expression for the characteristic function of the Markov transition $X_n^{\pi, \varphi, \zeta}=x\to X_{n+1}^{\pi, \varphi, \zeta}$ given a decoupling field $(\varphi, \zeta)$.
	\begin{lemma}[Characteristic function of Markov transitions, \cite{ruijter_numerical_2016}]\label{lemma:chf}
		Consider \eqref{eq:sde:discretization:decoupled}, for any decoupling pair $\varphi, \zeta$, the characteristic function of $X_{n+1}^{\pi, \varphi, \zeta}$ given $X_n^{\pi, \varphi, \zeta}=x$ reads as follows		\begin{align}\label{eq:sde:characteristic_function}
			\phi^{\pi, \varphi, \zeta}_{n}(u\vert X_n^{\pi, \varphi, \zeta}=x)\begin{aligned}[t]
				&\coloneqq\mathds{E}_n^x\left[\exp(iuX_{n+1}^{\pi, \varphi, \zeta})\vert X_{n}^{\pi, \varphi, \zeta}=x\right]\\
				&=\frac{\exp(iu(x+\bar{m}^{\varphi, \zeta}(t_n, x)\Delta t_n) - \frac{u^2(\bar{s}^{\varphi, \zeta}(t_n, x))^2\Delta t_n}{2(1-2iu\bar{\kappa}^{\varphi, \zeta}(t_n, x)\Delta t_n)})}{\sqrt{(1-2iu\bar{\kappa}^{\varphi, \zeta}(t_n, x)\Delta t_n)}}.
			\end{aligned} 
		\end{align}
	\end{lemma}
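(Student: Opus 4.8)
The plan is to condition on $\mathcal{F}_{t_n}$, so that $\bar{m}^{\varphi,\zeta}(t_n,x)$, $\bar{s}^{\varphi,\zeta}(t_n,x)$ and $\bar{\kappa}^{\varphi,\zeta}(t_n,x)$ are deterministic, and thereby reduce the claim to a single scalar Gaussian integral. Writing $m=\bar{m}^{\varphi,\zeta}(t_n,x)$, $s=\bar{s}^{\varphi,\zeta}(t_n,x)$, $\kappa=\bar{\kappa}^{\varphi,\zeta}(t_n,x)$ for brevity and using that $\Delta W_n\sim\mathcal{N}(0,\Delta t_n)$ is independent of $\mathcal{F}_{t_n}$, the deterministic factor $\exp(iu(x+m\Delta t_n))$ pulls out of the expectation in \eqref{eq:sde:discretization:decoupled}, and it remains to evaluate
\begin{align}
    \mathbb{E}\!\left[\exp\!\big(ius\,\Delta W_n + iu\kappa(\Delta W_n)^2\big)\right]
    = \frac{1}{\sqrt{2\pi\Delta t_n}}\int_{\mathbb{R}}\exp\!\Big(ius\,w + iu\kappa w^2 - \frac{w^2}{2\Delta t_n}\Big)\mathrm{d}w.
\end{align}

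Next I would collect the terms quadratic in $w$. Setting $A\coloneqq \tfrac{1}{\Delta t_n}(1-2iu\kappa\Delta t_n)$, the exponent equals $-\tfrac{A}{2}w^2+ius\,w$, and completing the square turns it into $-\tfrac{A}{2}\big(w-\tfrac{ius}{A}\big)^2-\tfrac{u^2s^2}{2A}$. Since $\Re{A}=1/\Delta t_n>0$, one has $\int_{\mathbb{R}}\exp(-\tfrac{A}{2}(w-c)^2)\,\mathrm{d}w=\sqrt{2\pi/A}$ for every $c\in\mathbb{C}$, with the principal branch of the square root; this is the step that needs care (see below). Assembling the pieces yields
\begin{align}
    \frac{1}{\sqrt{2\pi\Delta t_n}}\cdot\sqrt{\frac{2\pi}{A}}\cdot\exp\!\Big(-\frac{u^2s^2}{2A}\Big)
    = \frac{1}{\sqrt{\Delta t_n A}}\exp\!\Big(-\frac{u^2s^2}{2A}\Big),
\end{align}
and substituting $\Delta t_n A = 1-2iu\kappa\Delta t_n$ together with $\tfrac{u^2 s^2}{2A}=\tfrac{u^2 s^2\Delta t_n}{2(1-2iu\kappa\Delta t_n)}$, then multiplying back the factor $\exp(iu(x+m\Delta t_n))$, reproduces exactly \eqref{eq:sde:characteristic_function}. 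The degenerate case $\kappa=0$ is just the ordinary Gaussian characteristic function and is recovered by continuity (it is also the $\kappa\to 0$ limit of the formula).

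The main obstacle is the rigorous justification of the complex Gaussian integral: one must verify that $\int_{\mathbb{R}}\exp(-\tfrac{A}{2}(w-c)^2)\,\mathrm{d}w=\sqrt{2\pi/A}$ remains valid for complex $A$ with $\Re{A}>0$ and complex shift $c=ius/A$, and pin down which branch of $\sqrt{\cdot}$ to take. I would handle this by first treating $u$ in a neighbourhood of $0$, where $1-2iu\kappa\Delta t_n$ stays close to $1$ so the principal square root is unambiguous, applying Cauchy's theorem on a rectangular contour (the Gaussian integrand decays as $\Re w\to\pm\infty$, so the contributions of the vertical sides vanish) to shift the contour back to the real axis, and then extending to all $u\in\mathbb{R}$ by analytic continuation, the right-hand side of \eqref{eq:sde:characteristic_function} being analytic in $u$ on $\mathbb{R}$ as long as $1-2iu\kappa\Delta t_n\neq 0$, which holds for all real $u$ whenever $\kappa\neq 0$. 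Everything else is routine algebra.
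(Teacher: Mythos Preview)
Your argument is correct, but it takes a different route from the paper's. The paper splits into the cases $\bar{\kappa}^{\varphi,\zeta}(t_n,x)=0$ (where the transition is Gaussian and the formula is immediate) and $\bar{\kappa}^{\varphi,\zeta}(t_n,x)\neq 0$; in the latter it completes the square \emph{in the random variable} rather than in the integration variable, writing
\[
X_{n+1}^{\pi,\varphi,\zeta}\;\overset{\mathrm d}{=}\;x+\bar m\Delta t_n-\frac{\bar s^{\,2}}{4\bar\kappa}+\bar\kappa\,\Delta t_n\,\mathcal X,
\]
where $\mathcal X$ follows a noncentral $\chi^2$ distribution with one degree of freedom and noncentrality parameter $\lambda=\bar s^{\,2}/(2\bar\kappa\sqrt{\Delta t_n})^2$, and then invokes the known characteristic function $\phi_{\chi'^2_1(\lambda)}(u)=\exp(i\lambda u/(1-2iu))/(1-2iu)^{1/2}$. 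Your approach instead evaluates the Gaussian integral directly by completing the square in the exponent and justifying the resulting complex Gaussian integral via a contour shift and analytic continuation. Both are valid; the paper's route is shorter if one takes the noncentral $\chi^2$ characteristic function as known (and sidesteps the branch-of-square-root discussion entirely, since that issue is already baked into the quoted $\chi^2$ formula), whereas your route is self-contained and avoids introducing an auxiliary distribution. One minor remark: your analytic-continuation step would be cleaner if phrased as analytic continuation in $A$ from the real half-line $A>0$ to the half-plane $\Re A>0$ (for fixed $B=ius$), rather than in $u$, since analyticity of the characteristic function in $u$ is not automatic and would otherwise need separate justification.
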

    \begin{proof}
        The proof is analogous to that of \cite[lemma 3.1]{ruijter_numerical_2016}.
        Due to $\Delta W_n\sim \mathcal{N}(0, \Delta t_n)$, for an $x$ such that $\bar{\kappa}^{\varphi, \zeta}(t_n, x)=0$, the transition in \eqref{eq:sde:discretization:decoupled} is normal with mean $x + \bar{\mu}^{\varphi, \zeta}(t_n, x)\Delta t_n$ and variance $(\bar{s}^{\varphi, \zeta}(t_n, x))^2\Delta t_n$. Equation \eqref{eq:sde:characteristic_function} is then found by substituting into the characteristic function of a normal distribution.\\
        For an $x$ such that $\bar{\kappa}^{\varphi, \zeta}(t_n, x)\neq 0$, one can write
        \begin{align}\label{eq:chf:proof:step}
            X_{n+1}^{\pi, \varphi, \zeta} \begin{aligned}[t]
                &=\begin{aligned}[t]
                x + \bar{\mu}^{\varphi, \zeta}(t_n, x)\Delta t_n&-(\bar{s}^{\varphi, \zeta}(t_n, x))^2/(4\bar{\kappa}^{\varphi, \zeta}(t_n, x)) \\
                &+ \bar{\kappa}^{\varphi, \zeta}(t_n, x)\big(\Delta W_n + \bar{s}^{\varphi, \zeta}(t_n, x)/(2\bar{\kappa}^{\varphi, \zeta}(t_n, x))\big)^2
            \end{aligned}\\
                &\overset{\textnormal{d}}{=} x + \bar{\mu}^{\varphi, \zeta}(t_n, x)\Delta t_n-(\bar{s}^{\varphi, \zeta}(t_n, x))^2/(4\bar{\kappa}^{\varphi, \zeta}(t_n, x)) + \bar{\kappa}^{\varphi, \zeta}(t_n, x)\Delta t_n \mathcal{X},
            \end{aligned}
        \end{align}
        where $\mathcal{X}\sim\chi'^{2}_\nu(\lambda(x))$ follows a noncentral $\chi^2$ distribution with degrees of freedom $\nu=1$ and non-centrality parameter 
        $\lambda=(\bar{s}^{\varphi, \zeta}(t_n, x))^2/(2\bar{\kappa}^{\varphi, \zeta}(t_n, x)\sqrt{\Delta t_n})^2$, whose characteristic function is given by
        \begin{align}\label{eq:chf:noncentral_chi_squared}
            \phi_{\chi'^{2}_{\nu(\lambda)}}(u|x)=\exp(i\lambda u/(1-2iu))/(1-2iu)^{-\nu/2}.
        \end{align}
        Combining \eqref{eq:chf:noncentral_chi_squared} with \eqref{eq:chf:proof:step}, we finally obtain \eqref{eq:sde:characteristic_function}.
    \end{proof}
    The BCOS method, originating from \cite{ruijter_fourier_2013} and later extended in \cite{ruijter_numerical_2016, huijskens_efficient_2016} proposes to approximate the conditional expectations in the discrete time approximations schemes of BSDEs collected in \eqref{eq:zhao_scheme} and \eqref{eq:crisan_scheme} by the COS formula \eqref{eq:cos_approximation:type1}. Indeed, by virtue of lemma \ref{lemma:chf}, the conditional expectations defining the approximations of $Y$ at time step $n$ are fully-implementable, given the availability of Fourier cosine expansion coefficients at time step $t_{n+1}$. However, in order to be able to apply the COS method on the aforementioned discretizations, one also needs to solve conditional expectations of the form
    \begin{align}
        \mathbb{E}_n^x\big[v(t_{n+1}, X_{n+1}^{\pi, \varphi, \zeta})(\Delta W_n)^k\big],\qquad k\in\mathbb{N},
    \end{align}
    which appear in the approximations for $Z$ in \eqref{eq:zhao_scheme} and \eqref{eq:crisan_scheme}.
    Given Fourier cosine expansion coefficients for the deterministic function $y\mapsto v(t_{n+1}, y)$ defined by \eqref{eq:def:cosine_expansion_coefficients}, one has
	\begin{align}
		\mathds{E}_n^x[v(t_{n+1}, X_{n+1}^{\pi, \varphi, \zeta})(\Delta W_n)^k] \begin{aligned}[t]
			&=\sideset{}{'}\sum_{l=0}^{\infty} \mathcal{V}_l(t_{n+1})\mathds{E}_n^x\left[\cos(l\pi\frac{X_{n+1}^{\pi, \varphi, \zeta} - a}{b-a})(\Delta W_n)^k\right]\\
			&=\sideset{}{'}\sum_{l=0}^{\infty} \mathcal{V}_l(t_{n+1})\Re{\mathds{E}_n^x\left[\exp(il\pi\frac{X_{n+1}^{\pi, \varphi, \zeta}}{b-a})(\Delta W_n)^k\right]\exp(-il\pi a/(b-a))}\\
			&=\sideset{}{'}\sum_{l=0}^{\infty} \mathcal{V}_l(t_{n+1}) \Re{J_k(x\vert l\pi/(b-a))\exp(-il\pi a/(b-a))},
		\end{aligned}
	\end{align}
    where we put
	\begin{align}\label{eq:def:j_k}
		J_k(x\vert u)\coloneqq \mathds{E}_n^x\left[\exp(iuX_{n+1}^{\pi, \varphi, \zeta})(\Delta W_n)^k\right],\qquad k\in\mathbb{N}.
	\end{align}
    Given the Markov transition \eqref{eq:sde:discretization:decoupled}, the following lemma generalizes the truncated series expansion argument in \cite[eq. 3.31]{ruijter_numerical_2016}, and is established by an integration by parts argument.
	\begin{lemma}[Integration by parts formulae with discretization \eqref{eq:sde:discretization:decoupled}]\label{lemma:j_k}
		Given the discretizations \eqref{eq:sde:discretization:decoupled}, the conditional expectations of the form \eqref{eq:def:j_k} admit for each $k\geq 1$
		\begin{align}
			J_k(x\vert u) = \frac{iu\bar{s}^{\varphi, \zeta}(t_n, x)\Delta t_n}{1-2iu\bar{\kappa}^{\varphi, \zeta}(t_n, x)\Delta t_n}J_{k-1}(x\vert u) + \mathds{1}_{k\geq 1}(k) \frac{(k-1)\Delta t_n}{1-2iu\bar{\kappa}^{\varphi, \zeta}(t_n, x)\Delta t_n}J_{k-2}(x\vert u).
		\end{align}
	\end{lemma}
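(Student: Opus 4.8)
The plan is to establish the recursion by a direct integration-by-parts argument in the Gaussian variable $\Delta W_n$, treating $x$ (and hence $\bar m^{\varphi,\zeta}(t_n,x)$, $\bar s^{\varphi,\zeta}(t_n,x)$, $\bar\kappa^{\varphi,\zeta}(t_n,x)$, $\Delta t_n$) as fixed parameters. Write $m=\bar m^{\varphi,\zeta}(t_n,x)$, $s=\bar s^{\varphi,\zeta}(t_n,x)$, $\kappa=\bar\kappa^{\varphi,\zeta}(t_n,x)$ for brevity, and let $\xi=\Delta W_n\sim\mathcal N(0,\Delta t_n)$ with density $p(\xi)=(2\pi\Delta t_n)^{-1/2}\exp(-\xi^2/(2\Delta t_n))$. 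By \eqref{eq:sde:discretization:decoupled}, $X_{n+1}^{\pi,\varphi,\zeta}=x+m\Delta t_n+s\xi+\kappa\xi^2$, so
\begin{align}
J_k(x\vert u)=\int_{\mathbb R}\exp\!\big(iu(x+m\Delta t_n+s\xi+\kappa\xi^2)\big)\,\xi^k\,p(\xi)\,\mathrm d\xi.
\end{align}
The key observation is that $p'(\xi)=-\xi p(\xi)/\Delta t_n$, so $\xi^k p(\xi)=-\Delta t_n\,\xi^{k-1}p'(\xi)$. Substituting this for one factor of $\xi$ in the $k$-th integrand and integrating by parts (the boundary terms vanish by the Gaussian decay of $p$) moves the derivative onto $\xi^{k-1}\exp(iu(\cdots))$.

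Carrying out that differentiation gives two contributions: one from differentiating $\xi^{k-1}$, producing $(k-1)\xi^{k-2}$ times the exponential, and one from differentiating $\exp(iu(s\xi+\kappa\xi^2))$, producing $iu(s+2\kappa\xi)$ times $\xi^{k-1}$ times the exponential. Hence
\begin{align}
J_k(x\vert u)=\Delta t_n\Big((k-1)J_{k-2}(x\vert u)+iu\,s\,J_{k-1}(x\vert u)+2iu\,\kappa\,J_k(x\vert u)\Big).
\end{align}
Solving this linear equation for $J_k(x\vert u)$ — i.e. collecting the $J_k$ terms on the left and dividing by $1-2iu\kappa\Delta t_n$ — yields exactly the claimed recursion
\begin{align}
J_k(x\vert u)=\frac{iu\,s\,\Delta t_n}{1-2iu\kappa\Delta t_n}J_{k-1}(x\vert u)+\frac{(k-1)\Delta t_n}{1-2iu\kappa\Delta t_n}J_{k-2}(x\vert u),
\end{align}
after substituting back $s=\bar s^{\varphi,\zeta}(t_n,x)$, $\kappa=\bar\kappa^{\varphi,\zeta}(t_n,x)$; the indicator $\mathds 1_{k\geq 1}(k)$ in the statement is simply the convention that the $J_{k-2}$ term is present for all $k\geq 1$ (for $k=1$ it multiplies $(k-1)=0$, so the term drops, consistent with $J_{-1}$ being irrelevant).

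The few points requiring care, rather than a genuine obstacle: first, justifying the vanishing of the boundary terms and the interchange of differentiation and integration, which follows from dominated convergence since $|\xi|^k e^{-\xi^2/(2\Delta t_n)}$ and its derivatives are integrable and the exponential factor has modulus $1$ (for real $u$; for complex $u$ one needs $\mathrm{Im}(u\kappa)$ small enough, or one argues by analytic continuation from a neighbourhood of the real axis, exactly as is implicitly needed for \eqref{eq:sde:characteristic_function} itself). Second, one should note that $1-2iu\kappa\Delta t_n\neq 0$ for real $u\neq 0$ when $\kappa\neq 0$, and equals $1$ when $\kappa=0$, so the division is always legitimate; the case $\kappa=0$ also recovers the simpler recursion $J_k=iu s\Delta t_n J_{k-1}+(k-1)\Delta t_n J_{k-2}$ used implicitly in \cite{ruijter_numerical_2016}. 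Third, the base cases $J_0(x\vert u)=\phi_n^{\pi,\varphi,\zeta}(u\vert X_n^{\pi,\varphi,\zeta}=x)$ from Lemma \ref{lemma:chf} and $J_{-1}\equiv 0$ (never invoked, as noted) make the recursion fully determined. The main work is thus the single integration by parts; everything else is bookkeeping.
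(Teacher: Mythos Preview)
Your proof is correct and follows essentially the same route as the paper: both write $J_k$ as a Gaussian integral in $\xi=\Delta W_n$, use the identity $\xi\,p(\xi)=-\Delta t_n\,p'(\xi)$ (equivalently the paper's rewriting of $\xi^k e^{-\xi^2/(2\Delta t_n)}$ as a total derivative plus a lower-order term), integrate by parts once, and then solve the resulting linear relation for $J_k$. Your additional remarks on the boundary terms, the non-vanishing of $1-2iu\kappa\Delta t_n$, and the $k=1$ indicator convention are all sound and slightly more explicit than the paper's version.
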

	\begin{proof}
		Combining \eqref{eq:def:j_k} with \eqref{eq:sde:discretization:decoupled} we get
		\begin{align}
			J_k(x\vert u) = \int_{\mathds{R}} \exp(iu(x+\bar{m}^{\varphi, \zeta}(t_n, x)\Delta t_n + \bar{s}^{\varphi, \zeta}(t_n, x)\xi + \bar{\kappa}^{\varphi, \zeta}(t_n, x)\xi^2))\xi^k \frac{\exp(-\xi^2/(2\Delta t_n))}{\sqrt{2\pi\Delta t_n}}\mathrm{d}\xi,
		\end{align}
        as $\Delta W_n \sim \mathcal{N}(0, \Delta t_n)$.
		It is straightforward to check that
		\begin{align}
			\xi^k\exp(-\xi^2/(2\Delta t_n))=\mathds{1}_{k\geq 1}(k)(k-1)\Delta t_n\xi^{k-2}\exp(-\xi^2/(2\Delta t_n)) - \Delta t_n \frac{\mathrm{d}(\xi^{k-1}\exp(-\xi^2/(2\Delta t_n))}{\mathrm{d}\xi}.
		\end{align}
		Plugging this in above gives
		\begin{align}\label{eq:proof:type2:step1}
			J_k(x\vert u) = \begin{aligned}[t]
				&\mathds{1}_{k\geq 1}(k)(k-1)\Delta t_n J_{k-2}(x\vert u)\\
                &\begin{aligned}[t]
                    -\frac{\Delta t_n}{\sqrt{2\pi\Delta t_n}}\int_{\mathds{R}} &\exp(iu(x+\bar{m}^{\varphi, \zeta}(t_n, x)\Delta t_n + \bar{s}^{\varphi, \zeta}(t_n, x)\xi+ \bar{\kappa}^{\varphi, \zeta}(t_n, x)\xi^2))\\
				&\times\frac{\mathrm{d}(\xi^{k-1}\exp(-\xi^2/(2\Delta t_n))}{\mathrm{d}\xi}\mathrm{d}\xi\eqqcolon\mathds{1}_{k\geq 1}(k) (k-1)\Delta t_n J_{k-2}(x\vert u) + I_k(x\vert u).
                \end{aligned}
			\end{aligned}
		\end{align}
		We apply integration by parts on the second term, which gives
		\begin{align}\label{eq:proof:type2:step2}
			I_k(x\vert u) &= \begin{aligned}[t]
				&-\Delta t_n\left[\exp(iu(x+\bar{m}^{\varphi, \zeta}(t_n, x)\Delta t_n + \bar{s}^{\varphi, \zeta}(t_n, x)\xi+ \bar{\kappa}^{\varphi, \zeta}(t_n, x)\xi^2))\xi^{k-1}\frac{\exp(-\xi^2/(2\Delta t_n))}{\sqrt{2\pi\Delta t_n}}\right]_{-\infty}^{+\infty}\\
				&+\Delta t_n\int_{\mathds{R}} iu\bar{s}^{\varphi, \zeta}(t_n, x)\exp(iuX_{n+1}^{\pi, \varphi, \zeta}(\xi))\xi^{k-1}\frac{\exp(-\xi^2/(2\Delta t_n))}{\sqrt{2\pi\Delta t_n}}\mathrm{d}\xi\\
				&+ \Delta t_n\int_{\mathds{R}}2iu\bar{\kappa}^{\varphi, \zeta}(t_n, x)\exp(iuX_{n+1}^{\pi, \varphi, \zeta}(\xi))\xi^{k}\frac{\exp(-\xi^2/(2\Delta t_n))}{\sqrt{2\pi\Delta t_n}}\mathrm{d}\xi
			\end{aligned}\\
			&\equiv 0 + iu\bar{s}^{\varphi, \zeta}(t_n, x)\Delta t_n J_{k-1}(x\vert u) + 2iu\bar{\kappa}^{\varphi, \zeta}(t_n, x)\Delta t_n J_k(x\vert u),
		\end{align}
        for any $v$ with sufficient radial decay in space.
		Combining \eqref{eq:proof:type2:step1} and \eqref{eq:proof:type2:step2} finally gives
		\begin{align}
			J_k(x\vert u) = \frac{iu\bar{s}^{\varphi, \zeta}(t_n, x)\Delta t_n}{1-2iu\bar{\kappa}^{\varphi, \zeta}(t_n, x)\Delta t_n}J_{k-1}(x\vert u)+\mathds{1}_{k\geq 1(k)}\frac{(k-1)\Delta t_n}{1-2iu\bar{\kappa}^{\varphi, \zeta}(t_n, x)\Delta t_n}J_{k-2}(x\vert u).
		\end{align}
	\end{proof}
	Combining this with the result of lemma \ref{lemma:j_k}, introducing the notation
    \begin{align}\label{def:Phi}
        \Phi^{\pi, \varphi, \zeta}_n(u\vert x)\coloneqq \phi^{\pi, \varphi, \zeta}_n(u\vert x)\exp(-iua),
    \end{align}
    we subsequently gather via recursion
	\begin{align}
		\mathds{E}_n^x[h(t_{n+1}, X_{n+1}^{\pi, \varphi, \zeta})] &= \sideset{}{'}\sum_{l=0}^{\infty} \mathcal{H}_l(t_{n+1}) \Re{\Phi^{\pi, \varphi, \zeta}_n(l\pi/(b-a)\vert x)},\\
		\mathds{E}_n^x[h(t_{n+1}, X_{n+1}^{\pi, \varphi, \zeta})\Delta W_n] &= \sideset{}{'}\sum_{l=0}^{\infty} \mathcal{H}_l(t_{n+1}) \Re{\frac{il\pi\bar{s}^{\varphi, \zeta}(t_n, x)\Delta t_n/(b-a)}{1-2il\pi\bar{\kappa}^{\varphi, \zeta}(t_n, x)\Delta t_n/(b-a)}\Phi^{\pi, \varphi, \zeta}_n(l\pi/(b-a)\vert x)},\\
		\mathds{E}_n^x[h(t_{n+1}, X_{n+1}^{\pi, \varphi, \zeta})(\Delta W_n)^2] &= \begin{aligned}[t]
			\sideset{}{'}\sum_{l=0}^{\infty} \mathcal{H}_l(t_{n+1}) \Bigg[&\Re{\left(\frac{il\pi\bar{s}^{\varphi, \zeta}(t_n, x)\Delta t_n/(b-a)}{1-2il\pi\bar{\kappa}^{\varphi, \zeta}(t_n, x)\Delta t_n/(b-a)}\right)^2\Phi^{\pi, \varphi, \zeta}_n(l\pi/(b-a)\vert x)}\\
			&\Re{\frac{(k-1)\Delta t_n}{1-2il\pi\bar{\kappa}^{\varphi, \zeta}(t_n, x)\Delta t_n/(b-a)}\Phi^{\pi, \varphi, \zeta}_n(l\pi/(b-a)\vert x)}\Bigg].
		\end{aligned}
	\end{align}
	Given the analytical expression for $\Phi_n^{\pi, \varphi, \zeta}(u\vert x)$ established by lemma \ref{lemma:chf}, a sufficiently truncated finite cosine expansion gives the necessary COS estimates. Consequently, using lemma \ref{lemma:j_k} and the expressions above, one can compute all conditional expectations arising in the scheme \eqref{eq:zhao_scheme}, provided that the Fourier cosine expansion coefficients of the deterministic functions $z(t_{n+1}, x), y(t_{n+1}, x)$ and $\bar{f}^{z(t_{n+1}, \cdot), y(t_{n+1}, \cdot)}(t_{n+1}, x)$ are available, or at least can be approximated.

    Nonetheless, in order to make the scheme \eqref{eq:crisan_scheme} fully-implementable in the BCOS framework, one needs to establish an additional integration by parts formula that allows the computations of conditional expectations of the form
    \begin{align}\label{eq:crisan:conditional_expectation}
        \mathbb{E}_n^x\Big[\big(\int_{t_{n}}^{t_{n+1}} (s-t_n)\mathrm{d}W_s\big)v(t_{n+1}, X_{n+1}^{\pi, \varphi, \zeta})\Big],
    \end{align}
    appearing in \eqref{eq:crisan_scheme}. In the following proposition, we show that for given choices of $\theta_1, \theta_2, \theta_3, \theta_4$ the second-order scheme of \cite{crisan_second_2014} in \eqref{eq:crisan_scheme} is included in the generalized theta-scheme of \cite{zhao_generalized_2012} in \eqref{eq:zhao_scheme}.
    \begin{proposition}\label{prop:crisan_in_zhao}
        For conditional expectations of the form \eqref{eq:crisan:conditional_expectation}, the following identity holds
        \begin{align}\label{eq:crisan:integration-by-parts:result}
        \mathbb{E}_n^x\Big[\big(\int_{t_{n}}^{t_{n+1}} s\mathrm{d}W_s\big)v(t_{n+1}, X_{n+1}^{\pi, \varphi, \zeta})\Big]=\frac{\Delta t_n}{2}\mathbb{E}_n^x\big[\Delta W_n v(t_{n+1}, X_{n+1}^{\pi, \varphi, \zeta})\big],
    \end{align}
    with $\Delta B_n\coloneqq \int_{t_{n}}^{t_{n+1}} (s-t_n)\mathrm{d}W_s$. In particular, \eqref{eq:crisan_scheme} is included in \eqref{eq:zhao_scheme} for $\theta_1=1/2, \theta_3=1-\theta_2, \theta_4=0$.
    \end{proposition}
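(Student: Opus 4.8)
The plan is to prove the statement in two stages: first establish the integration-by-parts identity \eqref{eq:crisan:integration-by-parts:result}, written with $\Delta B_n\coloneqq\int_{t_n}^{t_{n+1}}(s-t_n)\,\mathrm{d}W_s$ as $\mathbb{E}_n^x[\Delta B_n\,v(t_{n+1},X_{n+1}^{\pi,\varphi,\zeta})]=\tfrac{\Delta t_n}{2}\,\mathbb{E}_n^x[\Delta W_n\,v(t_{n+1},X_{n+1}^{\pi,\varphi,\zeta})]$, and then plug it into the $z$-line of \eqref{eq:crisan_scheme} and match term by term with \eqref{eq:zhao_scheme}. For the identity, the key observation is that once $X_n^{\pi,\varphi,\zeta}=x$ is frozen, formula \eqref{eq:sde:discretization:decoupled} makes $v(t_{n+1},X_{n+1}^{\pi,\varphi,\zeta})$ a measurable function of the \emph{single} increment $\Delta W_n$, while both $\Delta B_n$ and $\Delta W_n$ are integrals of Brownian increments over $(t_n,t_{n+1}]$ and hence independent of $\mathcal{F}_{t_n}$. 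I would therefore condition on $\sigma(\Delta W_n)\vee\mathcal{F}_{t_n}$ and use the tower property, so that it suffices to compute $\mathbb{E}[\Delta B_n\mid\Delta W_n]$. Since $(\Delta B_n,\Delta W_n)$ is a centred bivariate Gaussian, this conditional expectation is linear in $\Delta W_n$ with slope $\operatorname{Cov}(\Delta B_n,\Delta W_n)/\operatorname{Var}(\Delta W_n)$; by the It\^o isometry $\operatorname{Var}(\Delta W_n)=\Delta t_n$ and $\operatorname{Cov}(\Delta B_n,\Delta W_n)=\int_{t_n}^{t_{n+1}}(s-t_n)\,\mathrm{d}s=(\Delta t_n)^2/2$, so $\mathbb{E}[\Delta B_n\mid\Delta W_n]=\tfrac{\Delta t_n}{2}\Delta W_n$ and the identity drops out. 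Equivalently, one decomposes $\Delta B_n=\tfrac{\Delta t_n}{2}\Delta W_n+R$ with $R$ a centred Gaussian independent of $\Delta W_n$ and notes $\mathbb{E}[R\,v(t_{n+1},X_{n+1}^{\pi,\varphi,\zeta})]=\mathbb{E}[R]\,\mathbb{E}[v(t_{n+1},X_{n+1}^{\pi,\varphi,\zeta})]=0$, a factorization that needs only $v(t_{n+1},X_{n+1}^{\pi,\varphi,\zeta})\in L^1$, which is automatic in the COS setting where $v(t_{n+1},\cdot)$ is effectively a bounded cosine series on $[a,b]$.

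For the inclusion I would substitute $\int_{t_n}^{t_{n+1}}s\,\mathrm{d}W_s=\Delta B_n+t_n\Delta W_n$ into the $z$-line of \eqref{eq:crisan_scheme}. Abbreviating by $A$ the common integrand $y(t_{n+1},X_{n+1}^{\pi,\varphi,\zeta})+\Delta t_n f(t_{n+1},X_{n+1}^{\pi,\varphi,\zeta},y(t_{n+1},X_{n+1}^{\pi,\varphi,\zeta}),z(t_{n+1},X_{n+1}^{\pi,\varphi,\zeta}))$, the right-hand side becomes $\tfrac{4\Delta t_n+6t_n}{(\Delta t_n)^2}\mathbb{E}_n^x[A\Delta W_n]-\tfrac{6}{(\Delta t_n)^2}\mathbb{E}_n^x[\Delta B_n A]-\tfrac{6t_n}{(\Delta t_n)^2}\mathbb{E}_n^x[\Delta W_n A]$; the two $t_n$-proportional terms cancel, and \eqref{eq:crisan:integration-by-parts:result} converts $-\tfrac{6}{(\Delta t_n)^2}\mathbb{E}_n^x[\Delta B_n A]$ into $-\tfrac{3}{\Delta t_n}\mathbb{E}_n^x[\Delta W_n A]$, so the whole expression collapses to $\tfrac{1}{\Delta t_n}\mathbb{E}_n^x[A\Delta W_n]=\tfrac{1}{\Delta t_n}\mathbb{E}_n^x[y(t_{n+1},X_{n+1}^{\pi,\varphi,\zeta})\Delta W_n]+\mathbb{E}_n^x[f(t_{n+1},\cdot)\Delta W_n]$. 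On the other side, the $z$-line of \eqref{eq:zhao_scheme} with $\theta_4=0$ equals $\tfrac{1}{\Delta t_n}\mathbb{E}_n^x[y(t_{n+1},X_{n+1}^{\pi,\varphi,\zeta})\Delta W_n]+\tfrac{1-\theta_2}{\theta_3}\mathbb{E}_n^x[f(t_{n+1},\cdot)\Delta W_n]$, which coincides with the above precisely when $\theta_3=1-\theta_2$; the $y$-lines coincide outright once $\theta_1=1/2$, and the terminal data match. This identifies \eqref{eq:crisan_scheme} as the instance $\theta_1=1/2$, $\theta_3=1-\theta_2$, $\theta_4=0$ of \eqref{eq:zhao_scheme}.

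The individual computations are elementary; the only point that really demands care is the bookkeeping in the second stage — in particular, noticing that the $t_n$-dependence of the coefficients in \eqref{eq:crisan_scheme} is an artefact of having split $\int s\,\mathrm{d}W_s$ rather than the recentred $\int(s-t_n)\,\mathrm{d}W_s=\Delta B_n$, and that these terms cancel identically, leaving a $t_n$-free recursion that lines up with \eqref{eq:zhao_scheme}. On the probabilistic side, the only subtlety worth stating explicitly is the integrability assumption on $v(t_{n+1},\cdot)$ under which the Gaussian factorization used to derive \eqref{eq:crisan:integration-by-parts:result} is legitimate.
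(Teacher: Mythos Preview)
Your proof is correct, and it takes a genuinely different (and cleaner) route to the identity than the paper does. The paper writes out the conditional expectation $\mathbb{E}_n^x[\Delta B_n\,v(t_{n+1},X_{n+1}^{\pi,\varphi,\zeta})]$ as a double integral against the explicit bivariate Gaussian density of $(\Delta B_n,\Delta W_n)$, finds an antiderivative in the $\Delta B_n$-variable, and carries out an honest integration by parts in that variable; the boundary term vanishes and what remains is $\tfrac{\Delta t_n}{2}$ times the other expectation. You bypass this calculus entirely by observing that, once $X_n^{\pi,\varphi,\zeta}=x$ is fixed, the discrete transition \eqref{eq:sde:discretization:decoupled} makes $v(t_{n+1},X_{n+1}^{\pi,\varphi,\zeta})$ $\sigma(\Delta W_n)$-measurable, so the tower property reduces the problem to computing the Gaussian regression $\mathbb{E}[\Delta B_n\mid\Delta W_n]=\tfrac{\operatorname{Cov}(\Delta B_n,\Delta W_n)}{\operatorname{Var}(\Delta W_n)}\Delta W_n=\tfrac{\Delta t_n}{2}\Delta W_n$. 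Both arguments use the same covariance data, but yours replaces the explicit double-integral manipulation with a one-line conditioning step and makes the integrability requirement ($v(t_{n+1},X_{n+1}^{\pi,\varphi,\zeta})\in L^1$) transparent, whereas the paper needs ``sufficient spatial radial decay'' to kill the boundary term.

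For the inclusion in \eqref{eq:zhao_scheme}, the paper merely asserts that combining the identity with the $z$-line of \eqref{eq:crisan_scheme} yields the claim; your explicit bookkeeping---splitting $\int_{t_n}^{t_{n+1}} s\,\mathrm{d}W_s=\Delta B_n+t_n\Delta W_n$, noting the $t_n$-terms cancel, and then matching the $f$-coefficient to force $\theta_3=1-\theta_2$---is more detailed than what the paper provides and is correct.
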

    \begin{proof}
        Let us consider the conditional expectation given by \eqref{eq:crisan:conditional_expectation}. Similarly to lemma \ref{lemma:j_k}, we use an integration by parts argument to show that
    \begin{align}\label{eq:crisan:integration-by-parts}
        \mathbb{E}_n^x[\Delta B_n v(t_{n+1}, X_{n+1}^{\pi, \varphi, \zeta})]=\frac{\mathbb{C}\textnormal{ov}[\Delta B_n, \Delta W_n]}{\mathbb{V}\textnormal{ar}[\Delta W_n]}\mathbb{E}_n^x[\Delta W_n v(t_{n+1}, X_{n+1}^{\pi, \varphi, \zeta})].
    \end{align} 
    As the integrand in $\Delta B_n\coloneqq \int_{t_{n}}^{t_{n+1}} (s-t_n)\mathrm{d}W_s$ is deterministic, we have that $\Delta B_n\sim\mathcal{N}(0, (\Delta t_n)^3/3=\int_{t_{n}}^{t_{n+1}} (s-t_n)^2\mathrm{d}s)$, and due to It\^{o}'s isometry
    \begin{align}\label{eq:crisan:moments}
        \mathbb{E}[\Delta B_n]=0,\quad \mathbb{E}[(\Delta B_n)^2]=(\Delta t_n)^3/3,\quad \mathbb{C}\textnormal{ov}[\Delta B_n, \Delta W_n]=(\Delta t_n)^2/2.
    \end{align}
    Then the joint distribution of $(\Delta B_n, \Delta W_n)$ is a multivariate normal distribution with covariance matrix $\Sigma=\begin{pmatrix}
            (\Delta t_n)^3/3 & (\Delta t_n)^2/2\\
            (\Delta t_n)^2/2 & \Delta t_n
        \end{pmatrix}$ and $\det(\Sigma)=(\Delta t_n)^4/12$.
        Consequently, the conditional expectation in \eqref{eq:crisan:conditional_expectation} takes the following form
        \begin{align}\label{eq:appendix:integration-by-parts:step1}
            \int_{-\infty}^\infty\int_{-\infty}^\infty v(t_{n+1}, X_{n+1}^{\pi, \varphi, \zeta}(\xi))\eta\frac{\exp(-\frac{\Delta t_n\eta^2 -(\Delta t_n)^2 \eta\xi + (\Delta t_n)^3\xi^2/3}{2})}{2\pi (\Delta t_n)^4/12}\mathrm{d}\eta\mathrm{d}\xi.
        \end{align}
        By formal differentiation, it is straightforward to show that
        \begin{align}
            \frac{\partial}{\partial\eta}\Bigg(\frac{-\exp(-\frac{\Delta t_n\xi^2 -(\Delta t_n)^2 \eta\xi + (\Delta t_n)^3\xi^2/3}{2})}{\Delta t_n}\Bigg)=\begin{aligned}[t]
                &\eta\exp(-\frac{\Delta t_n\xi^2 -(\Delta t_n)^2 \eta\xi + (\Delta t_n)^3\xi^2/3}{2})\\
                &-\frac{\Delta t_n}{2}\xi\exp(-\frac{\Delta t_n\xi^2 -(\Delta t_n)^2 \eta\xi + (\Delta t_n)^3\xi^2/3}{2}),
            \end{aligned}
        \end{align}
        and thus we can write \eqref{eq:appendix:integration-by-parts:step1} in the following way, by virtue of integration by parts and the Fubini theorem
        \begin{align}
            \begin{aligned}[t]
                \int_{-\infty}^\infty &\frac{v(t_{n+1}, X_{n+1}^{\pi, \varphi, \zeta}(\xi))}{2\pi(\Delta t_n)^4/12}\Bigg[\frac{-\exp(-\frac{\Delta t_n\xi^2 -(\Delta t_n)^2 \eta\xi + (\Delta t_n)^3\xi^2/3}{2})}{\Delta t_n}\Bigg]_{-\infty}^{\infty}\mathrm{d}\xi\\
                &+\frac{\Delta t_n}{2}\int_{-\infty}^\infty v(t_{n+1}, X_{n+1}^{\pi, \varphi, \zeta}(\xi))\xi \frac{\exp(-\frac{\Delta t_n\eta^2 -(\Delta t_n)^2 \eta\xi + (\Delta t_n)^3\xi^2/3}{2})}{2\pi (\Delta t_n)^4/12}\mathrm{d}\eta\mathrm{d}\xi.
            \end{aligned}
        \end{align}
        The integrand of the first term vanishes for any $v$ with sufficient spatial radial decay, whereas the second term is identically equal to the right-hand side of \eqref{eq:crisan:integration-by-parts} combined with \eqref{eq:crisan:moments}.
        This completes the proof of identity \eqref{eq:crisan:integration-by-parts:result}.
        
        Combining this with the second line of \eqref{eq:crisan_scheme}, we finally find that 
    \eqref{eq:crisan_scheme} is also included in the generalized $\theta$-scheme of \cite{zhao_generalized_2012} given by \eqref{eq:zhao_scheme}, with the particular choice of $\theta_1=1/2, \theta_3=1-\theta_2, \theta_4=0$.
    \end{proof}

     In light of proposition \ref{prop:crisan_in_zhao}, for the upcoming numerical experiments, we implement the theta-scheme \eqref{eq:zhao_scheme} for general $\theta$s, which includes the second-order scheme in \eqref{eq:crisan_scheme} for the choices above.

    At this point the COS approximations of the conditional expectations in both \eqref{eq:zhao_scheme} and \eqref{eq:crisan_scheme} can be obtained as follows
    \begin{align}
        z(t_n, x)&= \begin{aligned}[t]
            \frac{1}{\theta_3 \Delta t_n}\Bigg(&\theta_4\Delta t_n \sideset{}{'}\sum_{k=0}^{K-1} \mathcal{Z}_k(t_{n+1})\Re{\Phi_n^{\pi, \varphi, \zeta}\big(k/(b-a)|x\big)}\\
            &+\sideset{}{'}\sum_{k=0}^{K-1}\begin{aligned}[t]
                \big((\theta_3-\theta_4)&\mathcal{Y}_k(t_{n+1})+(1-\theta_2)\Delta t_n \bar{\mathcal{F}}_k(t_{n+1})\big)\\
                \times&\Re{\frac{ik\pi\bar{s}^{\varphi, \zeta}(t_n, x)\Delta t_n/(b-a)}{1-2ik\pi\bar{\kappa}^{\varphi, \zeta}(t_n, x)\Delta t_n/(b-a)}\Phi^{\pi, \varphi, \zeta}_n(k\pi/(b-a)|x)}\Bigg),
            \end{aligned}
        \end{aligned}\label{eq:abstract_bcos:z}\\
        y(t_n, x) &= \begin{aligned}[t]
            &\theta_1\Delta t_n f(t_n, x, y(t_n, x), z(t_n, x)) \\&+ \sideset{}{'}\sum_{k=0}^{K-1}(\mathcal{Y}_k(t_{n+1}) + (1-\theta_1)\Delta t_n \bar{\mathcal{F}}_k(t_{n+1}))\Re{\Phi_n^{\pi, \varphi, \zeta}\big(k/(b-a)|x\big)}.\label{eq:abstract_bcos:y}
        \end{aligned}
    \end{align}
    However, these expressions are only fully implementable in the decoupled framework, i.e. when the drift and diffusion coefficients in \eqref{eq:fbsde:sde} do not depend on the solution pair of the backward equation. Indeed, in that case the characteristic function does not depend on the decoupling pair $\varphi, \zeta$ either -- see \eqref{eq:sde:characteristic_function}. Nonetheless, in the coupled framework, one needs to find reasonable approximations of the true decoupling relations given in \eqref{eq:sde:decoupled}, and use those for $(\varphi, \zeta)$ in between two time steps $t_n$ and $t_{n+1}$. In the paper of Huijskens et al. \cite{huijskens_efficient_2016}, using an Euler scheme for the discretization of the forward component -- see \eqref{eq:coefficients:euler}, in particular $\bar{\kappa}^{\varphi, \zeta}(t_n, x)\equiv 0$ -- there are three choices made in this regard. In the so called explicit method, argued by sufficient continuity in time of the solution pair of \eqref{eq:fbsde:bsde}, the decoupling relations are chosen to be the discrete time approximations at the next point in time $\varphi=y(t_{n+1}, \cdot), \zeta=z(t_{n+1}, \cdot)$. Note that \eqref{eq:zhao_scheme} is a backward recursion, therefore these approximations are indeed available at the processing of time step $t_n$. Alternatively, \cite{huijskens_efficient_2016} proposed a so-called local method, where -- starting off from some initial conditions -- the decoupling relations $(\varphi, \zeta)$ are gathered through Picard iterations at each time step $t_n$, assuming that the mapping $(\varphi, \zeta)\mapsto (y(t_{n+1}, \cdot), z(t_{n+1}, \cdot))$ defined by \eqref{eq:abstract_bcos:z} is a contraction for small enough time steps. Finally, \cite{huijskens_efficient_2016} also proposed a so called global approach, in which the solution pair is gathered through taking Picard iterations over the whole backward recursion in \eqref{eq:zhao_scheme} -- similar to \cite{bender_time_2008}. As found in \cite{huijskens_efficient_2016}, the most efficient of these three options is the explicit method, as it only requires computing $\Phi_n^{\pi, \varphi, \zeta}(k\pi/(b-a)| x)$ once at every time step, and it does not require the mapping in \eqref{eq:abstract_bcos:z} to be a contraction either. In our implementation, we confirmed the findings of \cite{huijskens_efficient_2016} even in case of Milstein- and 2.0 weak Taylor discretizations, and found that a local method is both less stable and less accurate while creating an unnecessary computational overhead compared to the explicit decoupling.
    Therefore, in what follows we consider the explicit method only.

    \subsection{Derivative approximations and the BCOS method with higher-order Taylor schemes}

    Nonetheless, in contrast to \cite{huijskens_efficient_2016}, the BCOS approximations \eqref{eq:abstract_bcos:y}-\eqref{eq:abstract_bcos:z} are not implementable, in case the forward diffusion is discretized with a second-order Taylor approximation, including the Milstein- and 2.0 weak Taylor approximations. Indeed, then the corresponding second-order term in \eqref{eq:sde:discretization:decoupled} has $\bar{\kappa}^{\varphi, \zeta}\neq 0$. In particular, in case of the Milstein scheme, through the total derivative of the diffusion coefficient, the derivatives of the decoupling relations also appear in the computation of $\bar{\kappa}^{\varphi, \zeta}$ which implies that one needs to choose decoupling relations carefully such that their derivatives are either available in closed form or can be accurately approximated. Our main result is built on the insight that whenever the decoupling relations in the BCOS method are chosen according to the explicit scheme -- that is $\varphi=y(t_{n+1}, \cdot), \zeta=z(t_{n+1}, \cdot)$ -- the derivatives $\partial_x \varphi$ and $\partial_x \zeta$ are analytically available provided by the smoothness of the Fourier cosine expansion \eqref{eq:def:cosine_expansion_coefficients}.
    In fact, when the Fourier cosine expansion coefficients $\mathcal{Y}_k(t_{n+1}), \mathcal{Z}_k(t_{n+1})$ of $y(t_{n+1}, \cdot), z(t_{n+1}, \cdot)$ in \eqref{eq:abstract_bcos:y} and \eqref{eq:abstract_bcos:z} are known, then
    \begin{align}\label{eq:derivative_approximations:0}
    \begin{split}
        y(t_{n+1}, x)=\sideset{}{'}\sum_{k=0}^{K-1}\mathcal{Y}_k(t_{n+1})\cos(\frac{k\pi}{b-a}(x-a)),\quad z(t_{n+1}, x) = \sideset{}{'}\sum_{k=0}^{K-1} \mathcal{Z}_k(t_{n+1})\cos(\frac{k\pi}{b-a}(x-a)),
    \end{split}
    \end{align}
    and their corresponding derivatives can analytically be computed as follows
    \begin{align}\label{eq:derivative_approximations:1}
    \begin{split}
        \partial_x y(t_{n+1}, x)&=\sideset{}{'}\sum_{k=0}^{K-1} -\frac{k\pi}{b-a}\mathcal{Y}_k(t_{n+1})\sin(\frac{k\pi}{b-a}(x-a)),\\
        \partial_x z(t_{n+1}, x)&=\sideset{}{'}\sum_{k=0}^{K-1} -\frac{k\pi}{b-a}\mathcal{Z}_k(t_{n+1})\sin(\frac{k\pi}{b-a}(x-a)).
    \end{split}
    \end{align}
    Similarly, the second derivatives appearing in $\bar{m}^{\varphi, \zeta}$ and $\bar{s}^{\varphi, \zeta}$ in \eqref{eq:coefficients:2.0-weak-taylor} are given by 
    \begin{align}\label{eq:derivative_approximations:2}
    \begin{split}
        \partial_{xx}^2 y(t_{n+1}, x)&=\sideset{}{'}\sum_{k=0}^{K-1} -\left(\frac{k\pi}{b-a}\right)^2\mathcal{Y}_k(t_{n+1})\cos(\frac{k\pi}{b-a}(x-a)),\\
        \partial_{xx}^2 z(t_{n+1}, x)&=\sideset{}{'}\sum_{k=0}^{K-1} -\left(\frac{k\pi}{b-a}\right)^2\mathcal{Z}_k(t_{n+1})\cos(\frac{k\pi}{b-a}(x-a)).
    \end{split}
    \end{align}
    For the BCOS approximation at time step $t_n$, we choose to decouple the Markov transition \eqref{eq:sde:discretization:decoupled} of the forward diffusion by setting $\varphi=y(t_{n+1}, \cdot)$ and $\zeta=z(t_{n+1}, \cdot)$, in a similar fashion to the explicit method in \cite{huijskens_efficient_2016}. In case the Markov transition is modeled by the Euler scheme \eqref{eq:coefficients:euler}, the Fourier cosine expansions in \eqref{eq:derivative_approximations:0} are sufficient to compute \eqref{def:Phi} and thus make the abstract scheme in \eqref{eq:abstract_bcos:y}-\eqref{eq:abstract_bcos:z} fully implementable. However, when the transition is approximated by a Milstein scheme then the coefficients $\bar{m}^{\varphi, \zeta}$ and $\bar{s}^{\varphi, \zeta}$ depend on the derivatives $\partial_x y(t_{n+1}, \cdot)$ and $\partial_x z(t_{n+1}, \cdot)$ via the total derivative of $\partial_x \bar{\sigma}^{y(t_{n+1}, \cdot), z(t_{n+1}, \cdot)}(t_n, x)$ according to \eqref{eq:total_derivative_in_x}. Nonetheless, due to \eqref{eq:derivative_approximations:1}, these expressions can be analytically computed for the choice $\varphi=y(t_{n+1}, \cdot)$ and $\zeta=z(t_{n+1}, \cdot)$, which makes the computations of the coefficients in \eqref{eq:coefficients:milstein} possible, and subsequently enables to compute $\Phi_n^{\pi, \varphi, \zeta}(u|x)$ even in the case of a Milstein transition.

    Moreover, when the transition is given by a simplified order 2.0 weak Taylor approximation \eqref{eq:coefficients:2.0-weak-taylor}, then the coefficients $\bar{m}^{\varphi, \zeta}$ and $\bar{s}^{\varphi, \zeta}$ also depend on the second order derivatives of the decoupling fields $\varphi$ and $\zeta$, through $\partial_{xx}^2 \bar{\sigma}^{\varphi, \zeta}(t_n, x)$ according to \eqref{eq:total_derivative_in_x}.  Similarly to the previous reasoning, when the decoupling is chosen by $\varphi=y(t_{n+1}, \cdot)$ and $\zeta=z(t_{n+1}, \cdot)$, the formulas for the second-order derivatives in \eqref{eq:derivative_approximations:2} enable the computation of the coefficients in \eqref{eq:coefficients:2.0-weak-taylor}. Thereafter, \eqref{eq:derivative_approximations:1} and \eqref{eq:derivative_approximations:2} together enable us to compute the characteristic function in \eqref{def:Phi} even in case the Markov transition is modeled by a 2.0 weak Taylor approximation.

    \paragraph{Coefficient recovery.} The discussion above implicitly relied on the availability of the Fourier cosine expansion coefficients $\mathcal{Y}_k(t_{n+1})$ and $\mathcal{Z}_k(t_{n+1})$, defined by \eqref{eq:def:cosine_expansion_coefficients}. However, during the backward recursion of the discrete approximation of the BSDE as in \eqref{eq:abstract_bcos:y}-\eqref{eq:abstract_bcos:z}, these coefficients cannot be analytically computed. In order to make the transitions implementable, one needs to \emph{numerically approximate} the continuous integral in \eqref{eq:def:cosine_expansion_coefficients}, which can be achieved by a discrete Fourier cosine transform. In order to do this, we construct a discrete spatial grid partitioning the truncated integration range $[a, b]$ defined by the following points
    \begin{align}\label{eq:dct:discretization}
        x_l\coloneqq a+(l+1/2)\frac{b-a}{K},\quad l=0, \dots, K-1,
    \end{align}
    given that the Fourier cosine expansion is truncated to $K$-many terms. We put $\Pi\coloneqq \{x_l: l=0, \dots, K-1\}$. Subsequently, the continuous integral in \eqref{eq:def:cosine_expansion_coefficients} can be approximated by
    \begin{align}\label{eq:dct_approximation}
        \mathcal{V}_k(t_{n+1})\approx \frac{2}{K}\sum_{l=0}^{K-1} v(t_{n+1}, x_l)\cos(k\pi\frac{2l+1}{2K}),
    \end{align}
    which is a Discrete Cosine Transform (DCT) approximation of type 2.
    Given the terminal condition of the BSDE in \eqref{eq:fbsde:bsde}, the coefficient recovery defined by \eqref{eq:dct_approximation} makes the backward recursion in \eqref{eq:abstract_bcos:y}-\eqref{eq:abstract_bcos:z} implementable for all forward transition schemes \eqref{eq:coefficients:euler}-\eqref{eq:coefficients:milstein}-\eqref{eq:coefficients:2.0-weak-taylor}, together with \eqref{eq:derivative_approximations:1}-\eqref{eq:derivative_approximations:2} and the discussion above.

    \paragraph{Picard iterations.} Finally, we address the implicitness of the discrete time approximations in \eqref{eq:abstract_bcos:y} in case $\theta_1>0$. When the time step $\Delta t_n$ is sufficiently small, the implicit equation is uniquely defined for each $x$. In order to find the unique fixed point, we carry out Picard iterations indexed by $p$. We initialize the Picard iterations by the explicit approximation of \eqref{eq:abstract_bcos:y} with $\theta_1=0$
    \begin{align}\label{eq:abstract_bcos:explicit}
        h(t_{n}, x)= \sideset{}{'}\sum_{k=0}^{K-1}(\mathcal{Y}_k(t_{n+1}) + \Delta t_n \bar{\mathcal{F}}_k(t_{n+1}))\Re{\Phi_n^{\pi, \varphi, \zeta}\big(k/(b-a)|x\big)},
    \end{align}
    and set $y^{(0)}(t_{n+1}, x)=h(t_{n}, x)$. From thereon, the following Picard update is carried out
    \begin{align}\label{eq:abstract_bcos:picard_update}
        y^{(p+1)}(t_{n}, x) = y^{(p)}(t_{n}, x) + h(t_{n}, x)
    \end{align}
    until a required error tolerance level $\varepsilon$ is reached, measured by
    \begin{align}\label{eq:abstract_bcos:tolerance_level}
        \max_{x\in \Pi} \abs{y^{(p+1)}(t_n, x) - y^{(p)}(t_n, x)}&\leq \varepsilon.
    \end{align}

    In order to simplify the notation, we define $y_n^\pi(x) \coloneqq y(t_{n+1}, x)$ and $z_n^\pi(x)\coloneqq z(t_{n+1}, x)$ for the series expansions \eqref{eq:derivative_approximations:0}, with expansion coefficients recovered by DCT.
    With the above steps, the coupled BCOS method using higher-order Taylor discretizations as in \eqref{eq:sde:general_second_order_scheme} is now fully-implementable. The complete algorithm is given in algorithm \ref{algorithm}.
    
    \begin{algorithm}[t]
\caption{Coupled BCOS algorithm with higher-order Taylor schemes}\label{algorithm}
\begin{algorithmic}[1]
\Require $[a, b]$: integration range; $K$: number of Fourier terms; $\pi$: discrete time partition of $[0, T]$ 
\Require forward discretization scheme (\eqref{eq:coefficients:euler} or \eqref{eq:coefficients:milstein} or \eqref{eq:coefficients:2.0-weak-taylor}); $\theta_1, \theta_2, \theta_3, \theta_4$: generalized $\theta$-scheme in \eqref{eq:zhao_scheme}
\State $x_i=a+(i+1/2)(b-a)/K, i=0, \dots, K-1$: spatial grid to compute DCT
\If{$\sigma$ depends on $Z$}
    \State Solve $z_N^\pi(x_i)=\sigma(T, x_i, g(x_i), z_N^\pi(x_i))$ for all $i=0, \dots, N-1$
\EndIf
\State $\mathcal{Y}_k(T), \mathcal{Z}_k(T), \bar{\mathcal{F}}_k(T)$\Comment{Fourier coefficients at $T$ -- analytically or via DCT}
\For{$n=N-1, \dots, 0$}
    \State $\varphi \gets y_{n+1}^\pi, \zeta\gets z_{n+1}^\pi$ \Comment{decouple by approximations at next time step}
    \State $\Phi_n^{\pi, y_{n+1}^\pi, z_{n+1}^\pi}$ \Comment{compute characteristic function by \eqref{def:Phi} and \eqref{eq:sde:characteristic_function} with \eqref{eq:coefficients:euler} or \eqref{eq:coefficients:milstein} or \eqref{eq:coefficients:2.0-weak-taylor}}
    \State $z_n^\pi(x_i)$ \Comment{compute $Z$ approximations over spatial grid by \eqref{eq:abstract_bcos:z}}

    \State $y_n^{(0)}\gets h(t_n, x)$ \Comment{initialize Picard iterations by explicit approximation in \eqref{eq:abstract_bcos:explicit}}
    
    \While{($p\leq\text{max. Picard iter.}$) \textbf{and} (tolerance in \eqref{eq:abstract_bcos:tolerance_level} is not reached)}
        \State $y_n^{(p+1)}(x_i)$ \Comment{Picard update according to \eqref{eq:abstract_bcos:picard_update}}
    \EndWhile
    \State $y_n^\pi(x_i)\gets y_n^{p+1}(x_i)$

    \State $\mathcal{Y}_k(t_n), \mathcal{Z}_k(t_n), \bar{\mathcal{F}}_k(t_n)$ \Comment{coefficient recovery by DCT \eqref{eq:dct_approximation} on $z_n(x_i), y_n^\pi(x_i), \bar{f}^{y_n^\pi, z_n^\pi}(t_n, x_i)$}
\EndFor
\end{algorithmic}
\end{algorithm}

    \subsection{Errors and computational complexity}\label{sec:computational_complexity}
    In what follows, we discuss the errors and computational complexity induced by the coupled BCOS method in algorithm \ref{algorithm}.
    \paragraph{Error analysis.} The main sources of numerical errors and their contributions to the final approximation accuracy can be summarized as follows
    \begin{itemize}
        \item $K$ -- truncation of the Fourier cosine series: for smooth densities, the Fourier cosine expansion terms converge exponentially -- see e.g. \cite{fang_novel_2009}. However, as algorithm \ref{algorithm} relies on Discrete Cosine Transforms to recover the coefficients $\mathcal{Y}_k(t_n), \mathcal{Z}_k(t_n), \mathcal{F}(t_n)$, the total error term only converges quadratically in the number of Fourier terms $\mathcal{O}(K^{-2})$, which is the accuracy of the numerical integration in \eqref{eq:dct_approximation};
        \item $N$ -- time discretization:
        \begin{itemize}
            \item forward SDE \eqref{eq:fbsde:sde}: depending on whether the forward transition is approximated by an Euler \eqref{eq:coefficients:euler}, Milstein \eqref{eq:coefficients:milstein} or 2.0 weak Taylor scheme \eqref{eq:coefficients:2.0-weak-taylor}, weak and strong errors converge according to the rates collected in table \ref{tab:convergence-rates};
            \item backward SDE \eqref{eq:fbsde:bsde}: considering the generalized $\theta$-scheme of \cite{zhao_generalized_2012}, the corresponding discrete time approximations in \eqref{eq:zhao_scheme} converge with $\mathcal{O}(h^2)$ in the strong sense when $\theta_1=\theta_2=\theta_3=1/2$, $\theta_4\leq \abs{\theta_3}$. This in particular includes the scheme of \cite{crisan_second_2014}, for $\theta_4=0$ as established by \eqref{eq:crisan:integration-by-parts:result};
        \end{itemize}
        \item $P$ -- Picard iterations to approximate the implicit part of the conditional expectation in \eqref{eq:abstract_bcos:y} when $\theta_1>0$: when the driver $f$ is Lipschitz in its spatial arguments, the implicit mapping is contractive for small enough time steps $\Delta t_n$; then the Picard iterations converge exponentially, with the constant depending on the Lipschitz constants of the driver $f$ in \eqref{eq:fbsde:bsde};  
        \item $a, b$ -- truncated integration range: for each problem the range should be chosen carefully wide enough such that its overall contribution is negligible. The precise impact of this error is difficult to quantify; a recent result on the optimal choice of $a, b$ is given in \cite{junike_precise_2022}, however, it is not straightforward to extend this proof to coupled FBSDEs as one can not have a-priori guarantees about the distribution of $X$ at a given time due to the coupling. In the numerical experiments below, we choose a wide enough integration range such that the corresponding error term is negligible.
    \end{itemize}
    We emphasize that the derivative approximations \eqref{eq:derivative_approximations:1} and \eqref{eq:derivative_approximations:2}, which enable second-order Taylor discretizations \eqref{eq:sde:general_second_order_scheme} of the forward SDE do not add numerical errors, as they can be computed analytically given the cosine expansion coefficients $\mathcal{Y}_k(t_n), \mathcal{Z}_k(t_n)$ at each time step. This is the key observation that extends the coupled BCOS method to Milstein and 2.0 weak Taylor approximations for the Markov transition, together with the closed-form characteristic function provided by lemma \ref{lemma:chf}.
    
    \paragraph{Computational complexity.} The overall computational complexity of the second-order scheme proposed in algorithm \ref{algorithm} consists of the following components while processing time step $t_n$:
    \begin{itemize}
        \item $\mathcal{Y}_k(T), \mathcal{Z}_k(T), \bar{\mathcal{F}}_k(t_n)$ -- terminal expansion coefficients: analytically $\mathcal{O}(K)$ or by DCT $\mathcal{O}(K\log(K))$;
        \item $\varphi\gets y_{n+1}^\pi, \zeta\gets z_{n+1}^\pi$ -- computing decoupling fields: one needs to compute \eqref{eq:derivative_approximations:0}, \eqref{eq:derivative_approximations:1} and \eqref{eq:derivative_approximations:2} for each pair $(x_i, k)$ leading to\\
        \begin{tabular}{ccc}
            Euler & Milstein & 2.0-weak-Taylor \\
            $\mathcal{O}(K)$ & $\mathcal{O}(K^2)$ & $\mathcal{O}(K^2)$
        \end{tabular}
        \item $\Phi_n^{\pi, \varphi, \zeta}(k\pi/(b-a)\vert x_i)$ -- computation of the characteristic function over the spatial grid $x_i$ for each $k=0, \dots, K-1$: $\mathcal{O}(K^2)$;
        \item $y_n^{(p+1)}$ -- Picard iterations for the implicit part in \eqref{eq:abstract_bcos:y}: $\mathcal{O}(PK)$
        \item $y_n^\pi(x_i), z_n^\pi(x_i)$ -- BCOS approximations \eqref{eq:abstract_bcos:y} and \eqref{eq:abstract_bcos:z}: $\mathcal{O}(K\log(K))$;
        \item $\mathcal{Y}_k(t_n), \mathcal{Z}_k(t_n), \bar{\mathcal{F}}_k(t_n)$ -- coefficient recovery by DCT: $\mathcal{O}(K\log(K))$.
    \end{itemize}
    Apart from the first item, every point is repeated for all time steps $n=N-1, \dots, 0$, and the coupled BCOS method scales linearly with respect to the number of discretization points in time. The overall computational complexity is thus given by $\mathcal{O}(N(N+N^2+PN+N\log(N)))$. Comparing this to the computational complexity of the \textit{explicit method} in \cite{huijskens_efficient_2016}, which uses the Euler discretization \eqref{eq:coefficients:euler} in the approximation of the Markov transition \eqref{eq:sde:general_second_order_scheme}, we find that our generalized higher-order BCOS method admits to the same computational complexity with a higher constant. In fact, our extensions to the method using higher-order Taylor schemes such as the Milstein \eqref{eq:coefficients:milstein} or 2.0 weak Taylor \eqref{eq:coefficients:2.0-weak-taylor} schemes only create a computational overhead in the computations of the derivatives in \eqref{eq:derivative_approximations:1} and \eqref{eq:derivative_approximations:2}, which  scale as $\mathcal{O}(K^2)$ in both cases. As the most expensive part of the BCOS algorithm is to compute the characteristic function \eqref{eq:sde:characteristic_function} at each time step, which requires $\mathcal{O}(K^2)$ operations, the additional steps for the Milstein and 2.0 weak Taylor discretizations only double and triple this, respectively. As we shall see in the numerical experiments presented below, this marginal computational overhead is justified by the significantly improved convergence rates -- see table \ref{tab:convergence-rates} -- in the number of discretization points in time. In other words, even though the Milstein and 2.0 weak Taylor approximations require the extra computation of the derivatives of the decoupling fields, they reach a desired error tolerance level faster in $N$ due to their higher strong and weak convergence rates, respectively.

    \begin{remark}
    This paper is concerned with scalar valued FBSDE systems, i.e. the solution to \eqref{eq:fbsde} is triple of scalar valued processes. Let us briefly highlight the main challenges one faces when generalizing algorithm \ref{algorithm} to higher dimensional equations. First, in case \eqref{eq:fbsde:sde} admits a vector valued solution, one needs additional assumptions to preserve the convergence rates in table \ref{tab:convergence-rates} for higher-order Taylor schemes. For instance, the Milstein scheme in \eqref{eq:coefficients:milstein} only has a strong convergence rate of $\mathcal{O}(h)$ for commutative noise -- see \cite{kloeden_numerical_1992}. Moreover, for a vector valued $X$ the corresponding decoupling fields in \eqref{eq:feynman-kac} are multivariate functions, admitting multi-dimensional Fourier cosine expansions in place of \eqref{eq:def:cosine_expansion_coefficients}. In particular, this implies a Fourier expansion along \emph{each dimension}, on top of the discretization \eqref{eq:dct:discretization} and numerical integration \eqref{eq:dct_approximation} of the multi-dimensional domain. Subsequently $\Phi_n^{\pi, \varphi, \zeta}$ in \eqref{def:Phi} becomes a $K^d\times K^d$ matrix. As discussed above, the computation of these transitional weights determined by the characteristic function is the most expensive part of algorithm \ref{algorithm}, which would scale exponentially in the number of spatial dimensions. Regardless of this curse of dimensionality, the method could be extended up to dimension to $3$ with increasing memory constraints on $K$. We refer to \cite{ruijter_two-dimensional_2012} where a two-dimensional COS method is presented, outside of the FBSDE framework.
    \end{remark}

    \section{Numerical experiments}\label{sec:numerical}
    The BCOS method has been implemented in a Python library, which is openly accessible through the following \href{https://github.com/balintnegyesi/coupled-BCOS}{github repository}\footnote{https://github.com/balintnegyesi/coupled-BCOS}. All computations were carried out on a Dell Alienware Aurora R10 machine equipped with an AMD Ryzen 9 3950X CPU (16 cores, 64Mb cache, 4.7 Ghz) using double precision.
    The solution triple of each equation is computed by plugging the analytical solution of the corresponding quasi-linear PDE in \eqref{eq:feynman-kac} into a simplified order 2.0 weak Taylor discretization of the forward diffusion in \eqref{eq:fbsde:sde} over a fine, equidistant time grid, consisting of $N'=10^6$ equally sized intervals. This way the time discretization error of the reference solution is negligible.

    Each FBSDE below is discretized by an equidistant time partition, consisting of $N+1$ points, leading to a uniform time step size $h=T/N$. The forward SDEs are discretized by \eqref{eq:sde:general_second_order_scheme}, including Euler, Milstein and 2.0 weak Taylor approximations. Each BSDE below is discretized according to the generalized $\theta$-scheme in \eqref{eq:zhao_scheme} with $\theta_1=\theta_2=\theta_3=1/2$. In order to distinguish between the two second-order schemes of the backward equation given by \eqref{eq:zhao_scheme} and \eqref{eq:crisan_scheme}, we consider the values $\theta_4=-1/2$ and $\theta_4=0$ corresponding to \eqref{eq:crisan_scheme} as shown by \eqref{eq:crisan:integration-by-parts:result}. For the COS method we specify an integration range $[a, b]$ wide enough for each problem specifically, such that the truncation error is negligible. For the implicit part of \eqref{eq:abstract_bcos:y}, we set the maximum number of Picard iterations in \eqref{eq:abstract_bcos:picard_update} to $100$, and the tolerance level in \eqref{eq:abstract_bcos:tolerance_level} to $\varepsilon=10^{-15}$.\footnote{thanks to the exponential convergence of the Picard iterations, tolerance level is usually reached in $5$ Picard iterations}

    Strong $L^2$ errors are computed over an independently simulated Monte Carlo sample consisting of $M=2^{10}$ paths of the Brownian motion, using the following discrete approximations
    \begin{align}\label{eq:strong-error-computation}
        \begin{split}
            \text{strong error } X &=\max_{n=0, \dots, N} \left(\frac{1}{M}\sum_{m=1}^{M}|X_{n}^{\pi}(m) -X_{t_{n}}(m)|^2\right)^{1/2},\\
        \text{strong error } Y&=\max_{n=0, \dots, N} \left(\frac{1}{M}\sum_{m=1}^{M}|Y_{n}^{\pi}(m) - Y_{t_{n}}(m)|^2\right)^{1/2},\\
        \text{strong error } Z&=\left(\frac{T}{NM}\sum_{n=0}^{N-1}\sum_{m=1}^{M}|Z_{n}^{\pi}(m) - Z_{t_{n}}(m)|^2\right)^{1/2}.
        \end{split}
    \end{align}
    The total strong error is given by the sum of the above three terms.
    Given the deterministic, fixed initial condition in \eqref{eq:fbsde:sde} for each of the equations below, the approximation errors at $t=0$ coincide with the weak errors in mean
    \begin{align}
        \text{error } Y_0 = |y_0^\pi(x_0) - u(0, x_0)|,\quad \text{error } Z_0 = |z_0^\pi(x_0)-v(0, x_0)|.
    \end{align}
    The total weak approximation error at $t_0$ is given by the sum of the above two terms.

    \subsection{Example 1: decoupled FBSDE}
    \begin{figure}[t]
        \centering
        \begin{subfigure}[t]{\sizethreebyone\textwidth}
        \includegraphics[width=\linewidth]{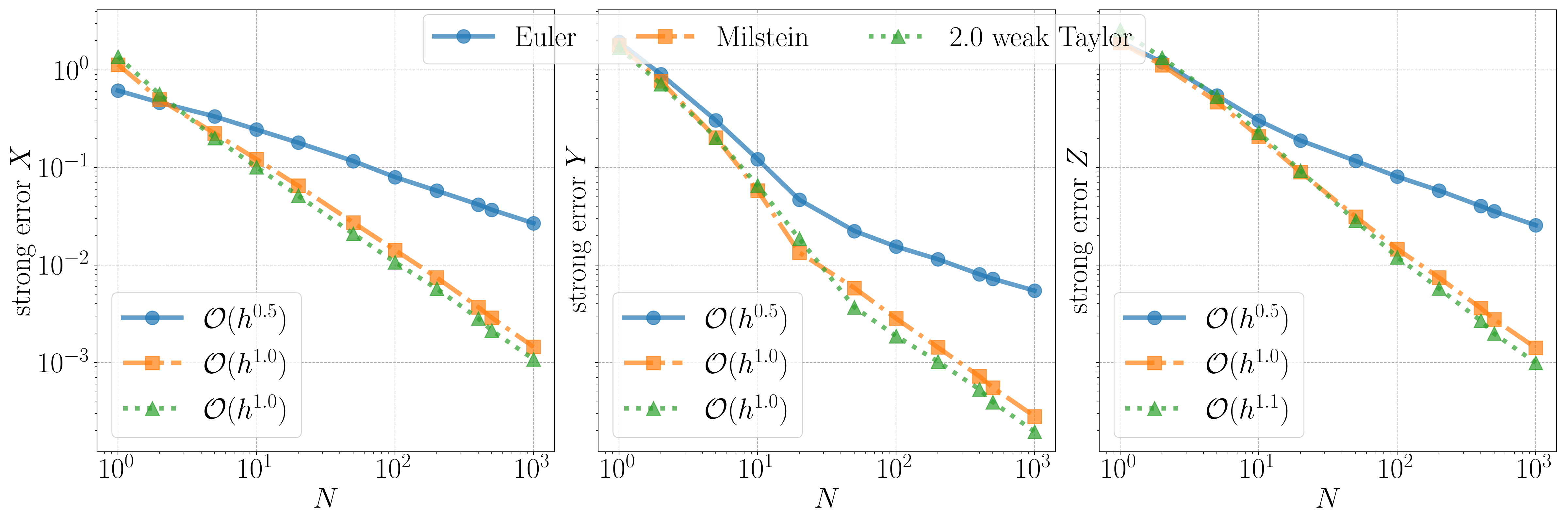}
        \caption{convergence of strong errors}
        \label{fig:example1:strong}
        \end{subfigure}
        
        \begin{subfigure}[t]{0.49\textwidth}
        \includegraphics[width=\linewidth]{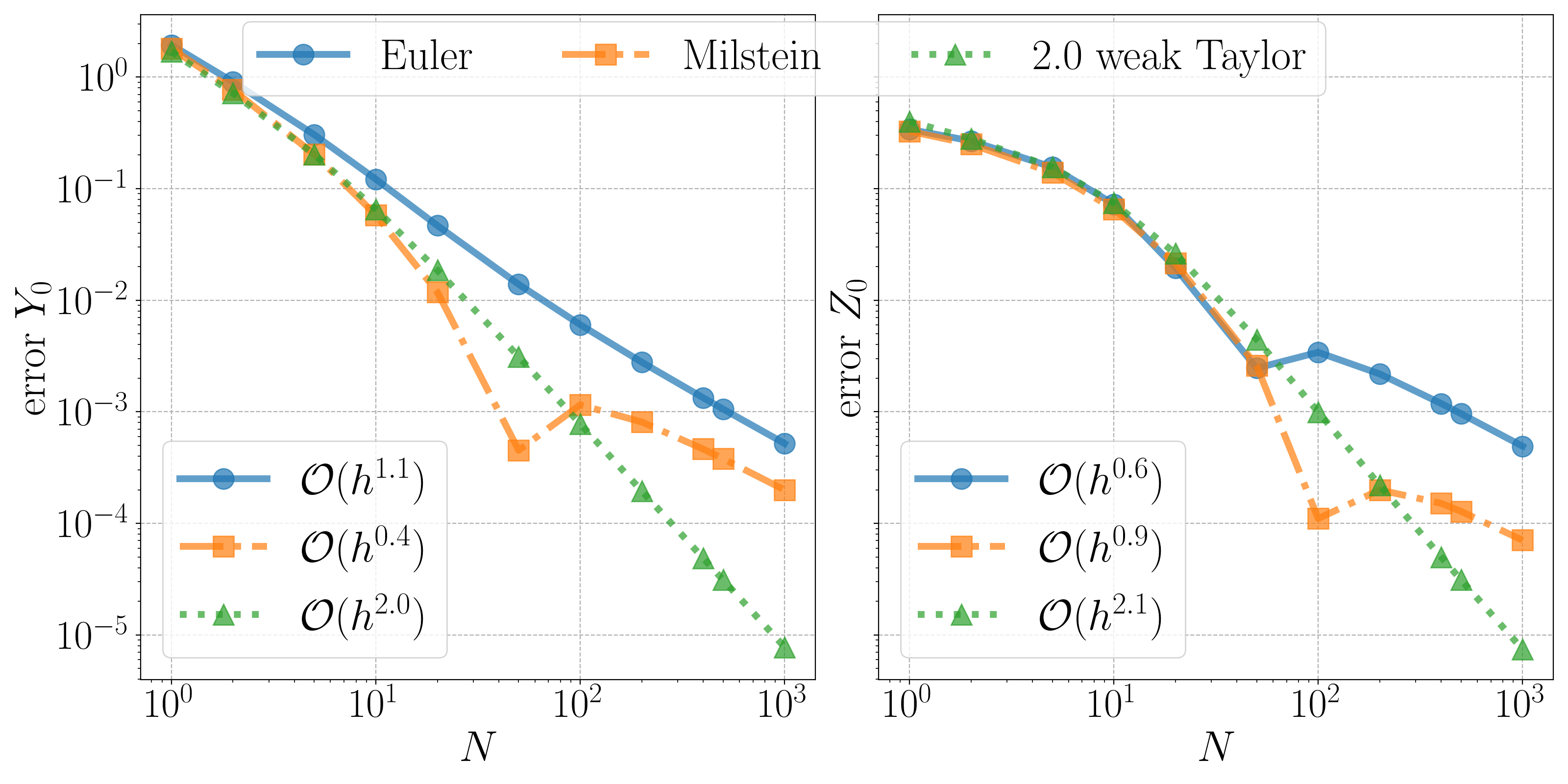}
        \caption{convergence at $t_0$}
        \label{fig:example1:weak}
        \end{subfigure}
        \begin{subfigure}[t]{\sizeonebyone\textwidth}
        \includegraphics[width=\linewidth]{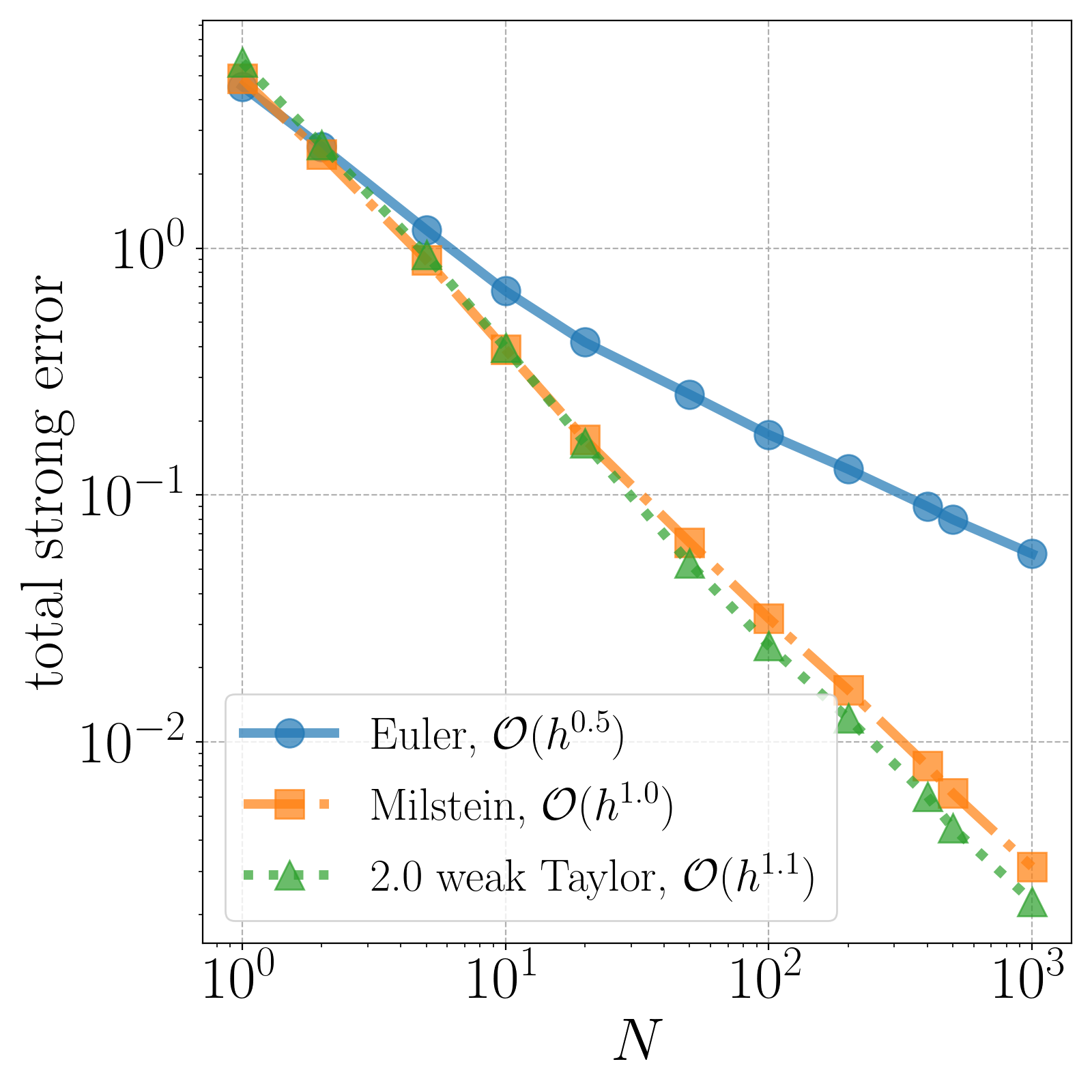}
        \caption{convergence of total strong errors}
        \label{fig:example1:strong:total}
        \end{subfigure}\hspace{0.0\textwidth}
        \begin{subfigure}[t]{\sizeonebyone\textwidth}
        \includegraphics[width=\linewidth]{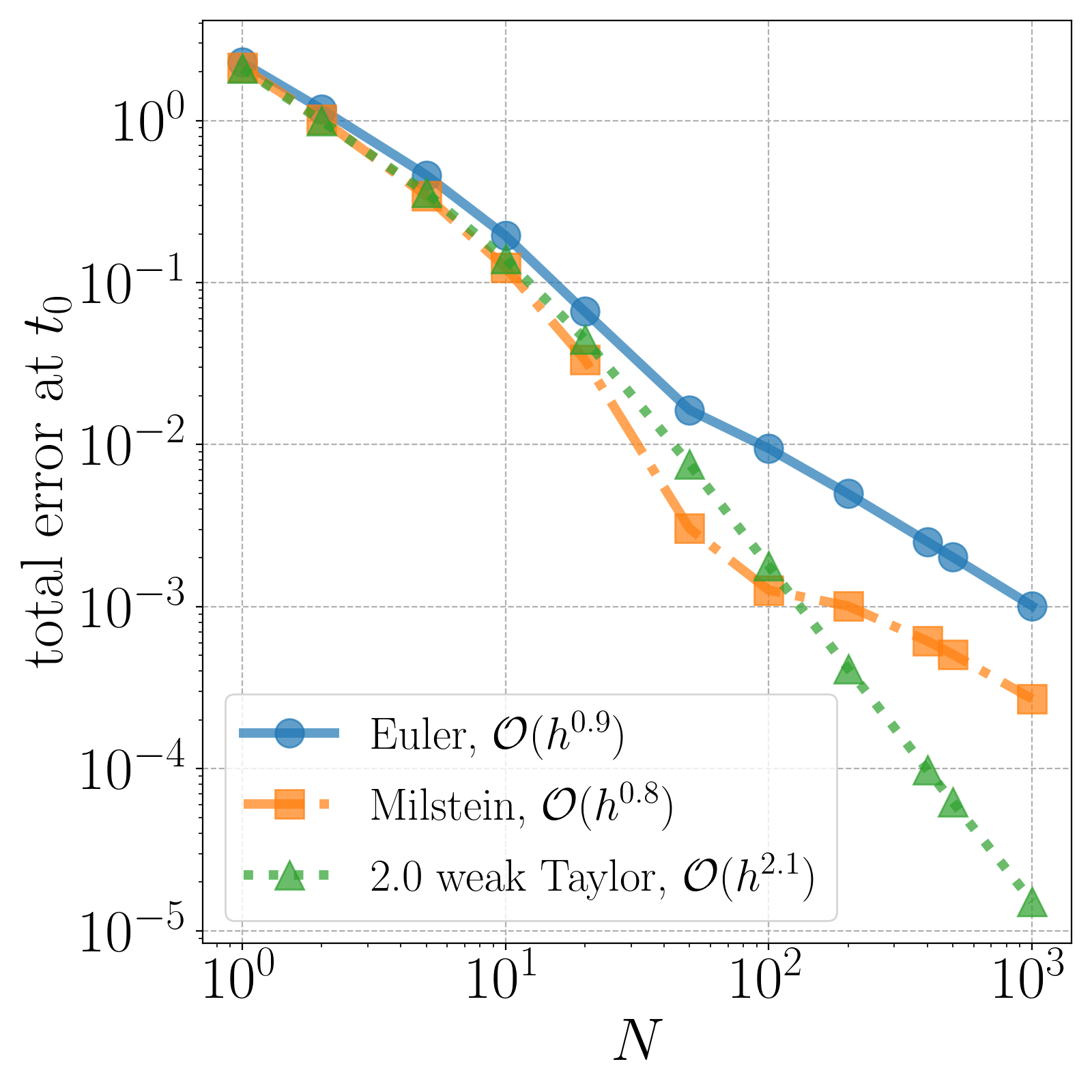}
        \caption{convergence of total errors at $t_0$}
        \label{fig:example1:weak:total}
        \end{subfigure}

        \caption{Example 1 in \eqref{eq:example1}. ($\theta_4=0, \theta_1=\theta_2=\theta_3=1/2$, $K=2^9$.)}
        \label{fig:example1}
    \end{figure}
    In order to demonstrate the slight differences between the discretizations \eqref{eq:zhao_scheme} and the one applied in \cite{ruijter_numerical_2016}, on top of the generalization provided by lemma \ref{lemma:j_k}, our first example corresponds to the special case of decoupled FBSDEs. This example appears in \cite[example 1]{ruijter_numerical_2016} and is originally from \cite{milstein_numerical_2006}. The coefficients in \eqref{eq:fbsde} read as follows
    \begin{align}\label{eq:example1}
    \begin{split}
        \mu(t, x, y, z)&=x(1+x^2)/(2+x^2),\quad \sigma(t, x, y, z)=(1+x^2)/(2+x^2),\quad g(x)=\exp(-x^2/(T+1)),\\
        f(t, x, y, z) &= \begin{aligned}[t]
            &\frac{1}{t+1}\exp(-x^2/(t+1))\left[4x^2\frac{1+x^2}{(2+x^2)^3} + \left(\frac{1+x^2}{2+x^2}\right)^2\left(1-\frac{2x^2}{t+1}-\frac{x^2}{t+1}\right)\right]\\
            &+\frac{zx}{(2+x^2)^2}\sqrt{\frac{1+y^2+\exp(-2x^2/(t+1))}{1+2y^2}}.
        \end{aligned}
    \end{split}
    \end{align}
    The analytical solution pair to \eqref{eq:fbsde:bsde} is given by the following deterministic mappings in \eqref{eq:feynman-kac}
    \begin{align}
        u(t, x)&= \exp(-x^2/(t+1)),\quad v(t, x)=-\frac{2x(1+x^2)}{(t+1)(2+x^2)}\exp(-x^2/(t+1)).
    \end{align}
    In line with \cite{ruijter_numerical_2016}, we choose $x_0=1$, $T=10$ and
    for the COS method, we fix $K=512$ Fourier coefficients, and set the domain $[a, b]$ as in\footnote{resulting in $a\approx-19.341110327048455$ and $b\approx 22.822591808529936$} \cite{ruijter_numerical_2016}. 
    
    The numerical results are collected in figure \ref{fig:example1} for all forward discretizations in \eqref{eq:sde:general_second_order_scheme}, using the generalized $\theta$-scheme \eqref{eq:zhao_scheme} with $\theta_1=\theta_2=\theta_3=1/2, \theta_4=0$. In figure \ref{fig:example1:strong}, the strong convergence rates of all three processes are collected for each method, computed according to \eqref{eq:strong-error-computation}. As we can see, when the forward transition in \eqref{eq:sde:general_second_order_scheme} is approximated by the Euler discretization \eqref{eq:coefficients:euler}, each process converges with a rate of $\mathcal{O}(h^{1/2})$ in the step size. However, this rate can be improved by the Milstein \eqref{eq:coefficients:milstein} and 2.0 weak Taylor schemes \eqref{eq:coefficients:2.0-weak-taylor}, which both show an asymptotic convergence rate of $\mathcal{O}(h)$. The convergence of the total strong errors is given in figure \ref{fig:example1:strong:total}, which demonstrates the improved first-order strong convergence with the Milstein and 2.0 weak Taylor discretizations. Figure \ref{fig:example1:weak} can be directly compared to fig. 5.1 in \cite{ruijter_numerical_2016}. Regardless of the slight difference in the backward theta-scheme considered therein, one can draw similar conclusions about the convergence of the approximation errors at $t_0=0$. Given the deterministic condition in \eqref{eq:fbsde:sde}, errors at $t_0$ coincide with weak errors, and the corresponding convergence rates admit weak convergence rates. This results in the Euler and Milstein discretizations exhibiting a weak convergence at $t_0$ with rate $\mathcal{O}(h)$ ; whereas the 2.0 weak Taylor scheme improves this to $\mathcal{O}(h^2)$. We remark that the latter convergence, in light of \eqref{eq:coefficients:2.0-weak-taylor}, also implies the convergence of the second-order derivatives (gammas) in \eqref{eq:derivative_approximations:2}. Experiments with other choices of $\theta_1, \theta_2, \theta_3$ confirmed the findings of \cite{ruijter_numerical_2016}, in terms of the weak convergence at $t_0$. In particular, by $N=10^3$, one gains approximately two orders of magnitude accuracy at $t_0$ by employing the 2.0 weak Taylor scheme. In many applications -- e.g. hedging, or portfolio allocation -- this is of high importance.
    Additionally, and in line with \cite{ruijter_numerical_2016}, we found that in order to have a strong convergence of rate $\mathcal{O}(h)$ for the Milstein and 2.0 weak Taylor schemes, it is necessary for the backward component to be discretized by a second-order scheme.

    \subsection{Example 2: partial coupling}
    The following coupled FBSDE system from \cite{negyesi_generalized_2024} is an adaptation of \cite{bender_time_2008}, including $Z$ coupling in the forward diffusion's drift coefficient. The coefficient functions in \eqref{eq:fbsde} read as follows
    \begin{align}\label{eq:example2}
        \begin{split}
            \mu(t, x, y, z)&=\kappa_y \bar{\sigma}y+\kappa_z z,\quad \sigma(t, x, y)=\bar{\sigma}y,\quad g(x)=\sin(x),\\
        f(t, x, y, z)&= -ry+1/2e^{-3r(T-t)}\bar{\sigma}^2\sin^3(x) - \kappa_y z - \kappa_z \bar{\sigma}e^{-3r(T-t)}\sin(x)\cos^2(x).
        \end{split}
    \end{align}
    The Markovian solution pair of the BSDE is given by the following deterministic mappings in \eqref{eq:feynman-kac} solving the corresponding quasi-linear PDE
    \begin{align}
        u(t, x) = e^{-r(T-t)}\sin(x),\quad v(t, x)=e^{-2r(T-t)}\bar{\sigma}\sin(x)\cos(x).
    \end{align}
    We take $T=1, X_0=\pi/4$, fix $r=0, \bar{\sigma}=0.4$ and $\kappa_y=10^{-1}$, which results in a numerically challenging equation with no monotonicity and strong coupling -- see \cite{bender_time_2008}. We fix a wide integration range by choosing $a=-3, b=5$.
    \begin{figure}[t]
        \centering
        \begin{subfigure}[t]{\textwidth}
        \includegraphics[width=\sizeonebyone\linewidth]{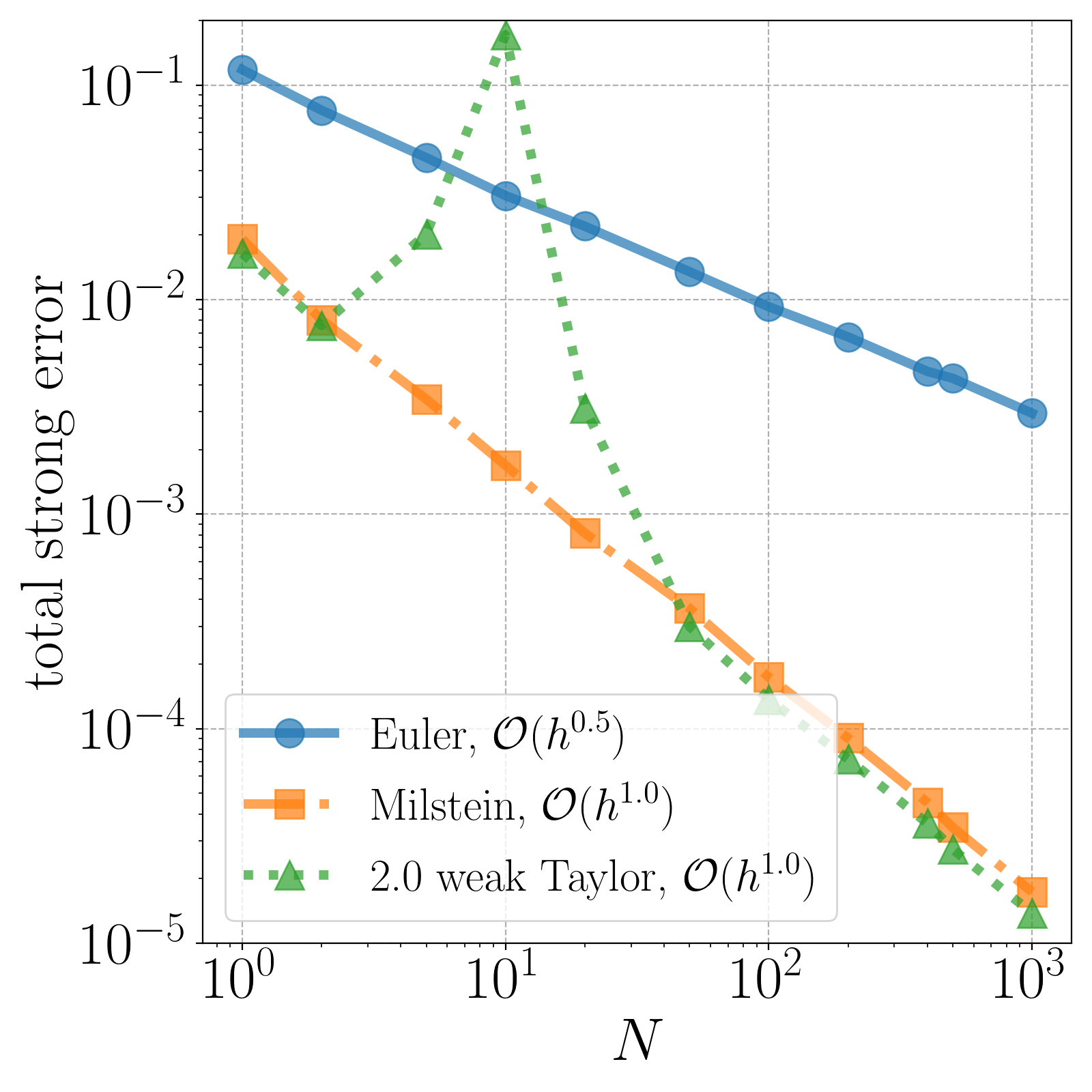}\includegraphics[width=\sizethreebyone\linewidth]{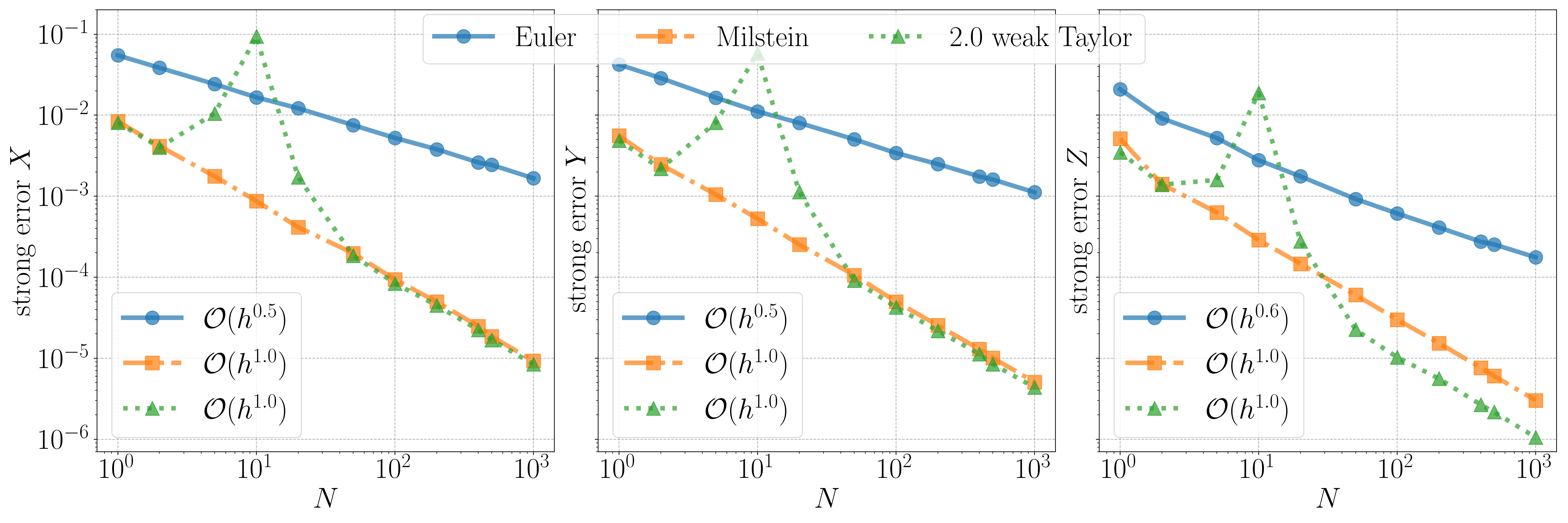}
        \caption{$\theta_4=-1/2$}
        \label{fig:example2:1:strong:zhao}
        \end{subfigure}

        \begin{subfigure}[t]{\textwidth}
        \includegraphics[width=\sizeonebyone\linewidth]{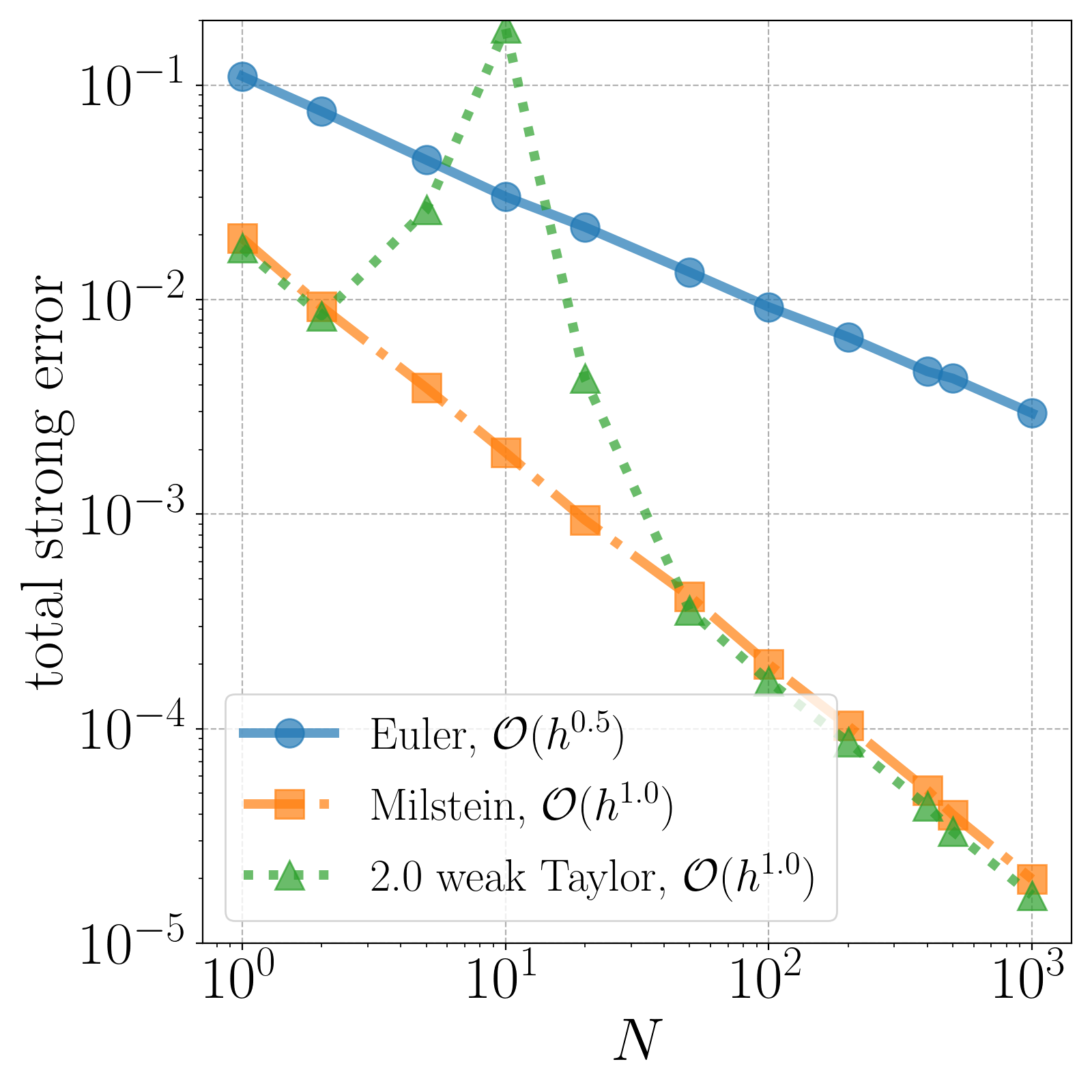}\includegraphics[width=\sizethreebyone\linewidth]{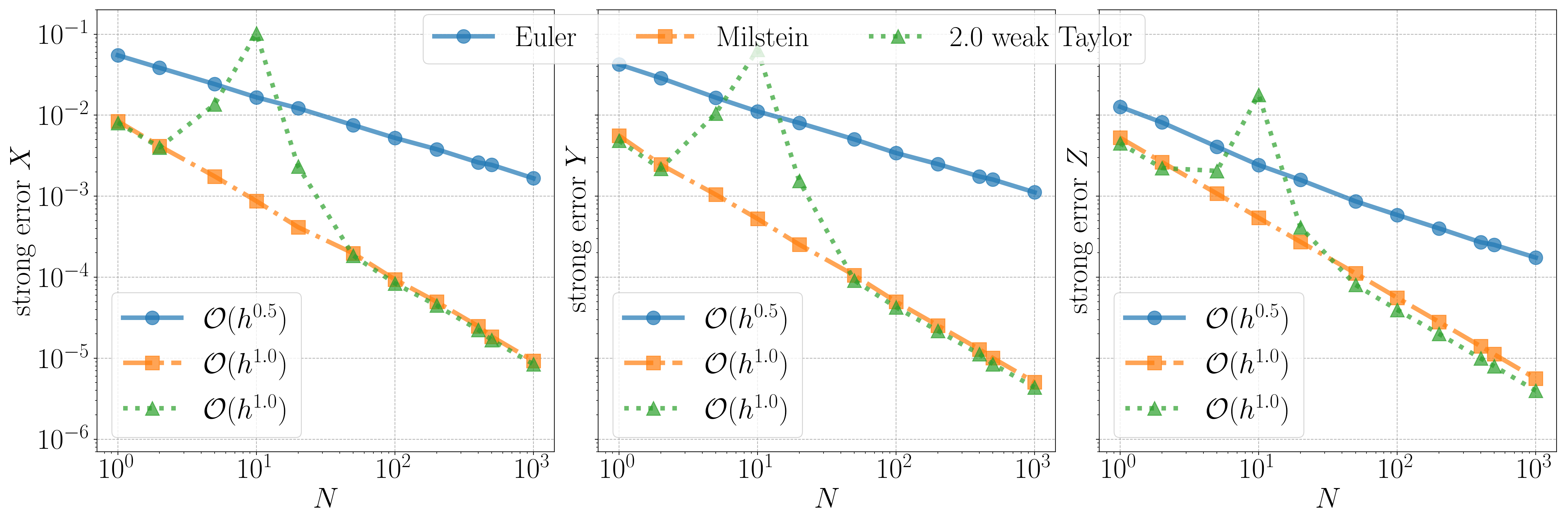}
        \caption{$\theta_4=0$}
        \label{fig:example2:1:strong:crisan}
        \end{subfigure}
        
        \caption{Example 2 in \eqref{eq:example2} with $\kappa_z=0$. Strong convergence. ($\theta_1=\theta_2=\theta_3=1/2$, $K=2^{10}$.)}
        \label{fig:example2:1:strong}
    \end{figure}
    \begin{figure}[t]
        \centering
        \begin{subfigure}[t]{\textwidth}
        \centering\includegraphics[width=\sizeonebyone\linewidth]{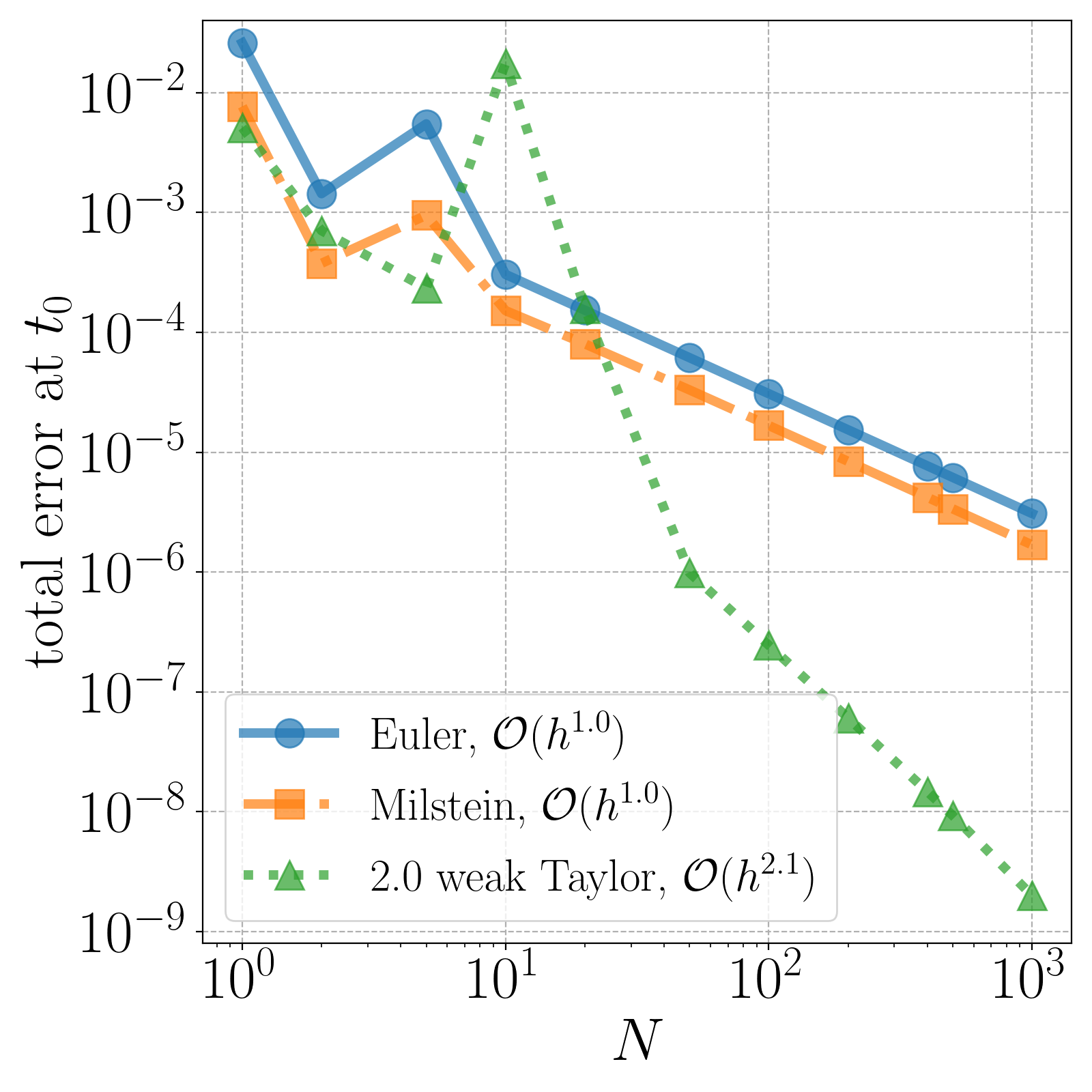}\includegraphics[width=\sizetwobyone\linewidth]{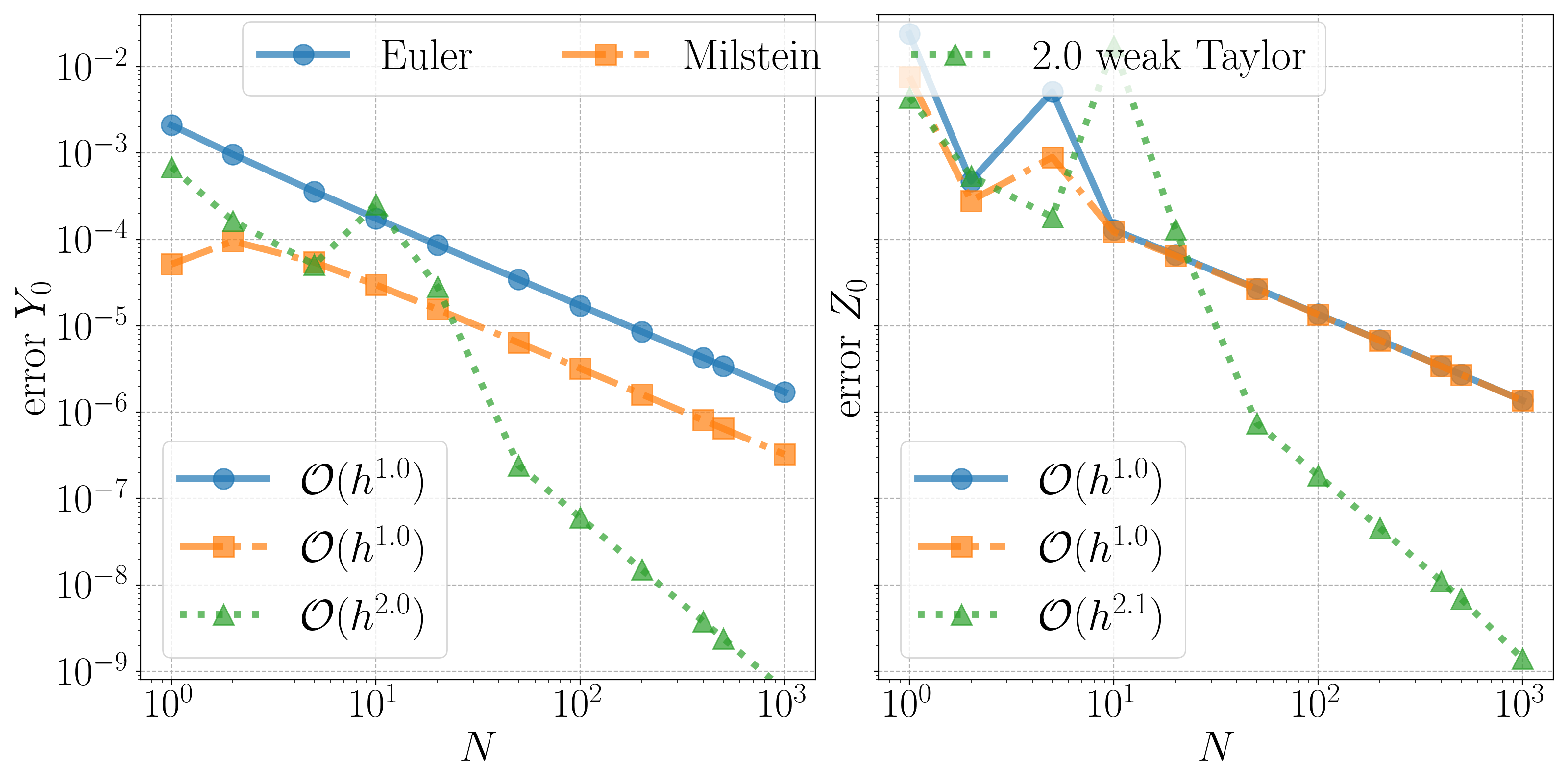}
        \caption{$\theta_4=-1/2$}
        \label{fig:example2:1:weak:zhao}
        \end{subfigure}
        
        \begin{subfigure}[t]{\textwidth}
        \centering\includegraphics[width=\sizeonebyone\linewidth]{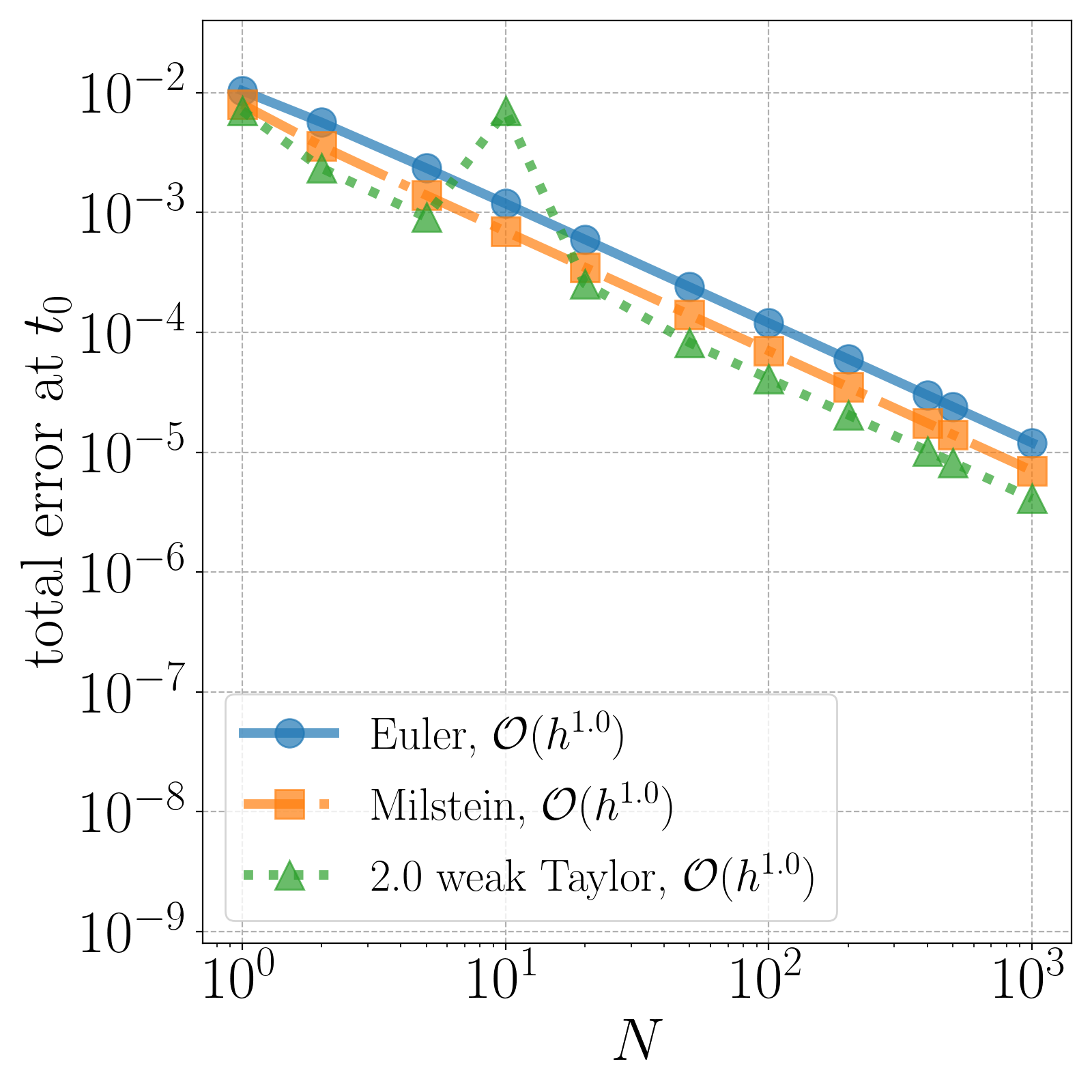}\includegraphics[width=\sizetwobyone\linewidth]{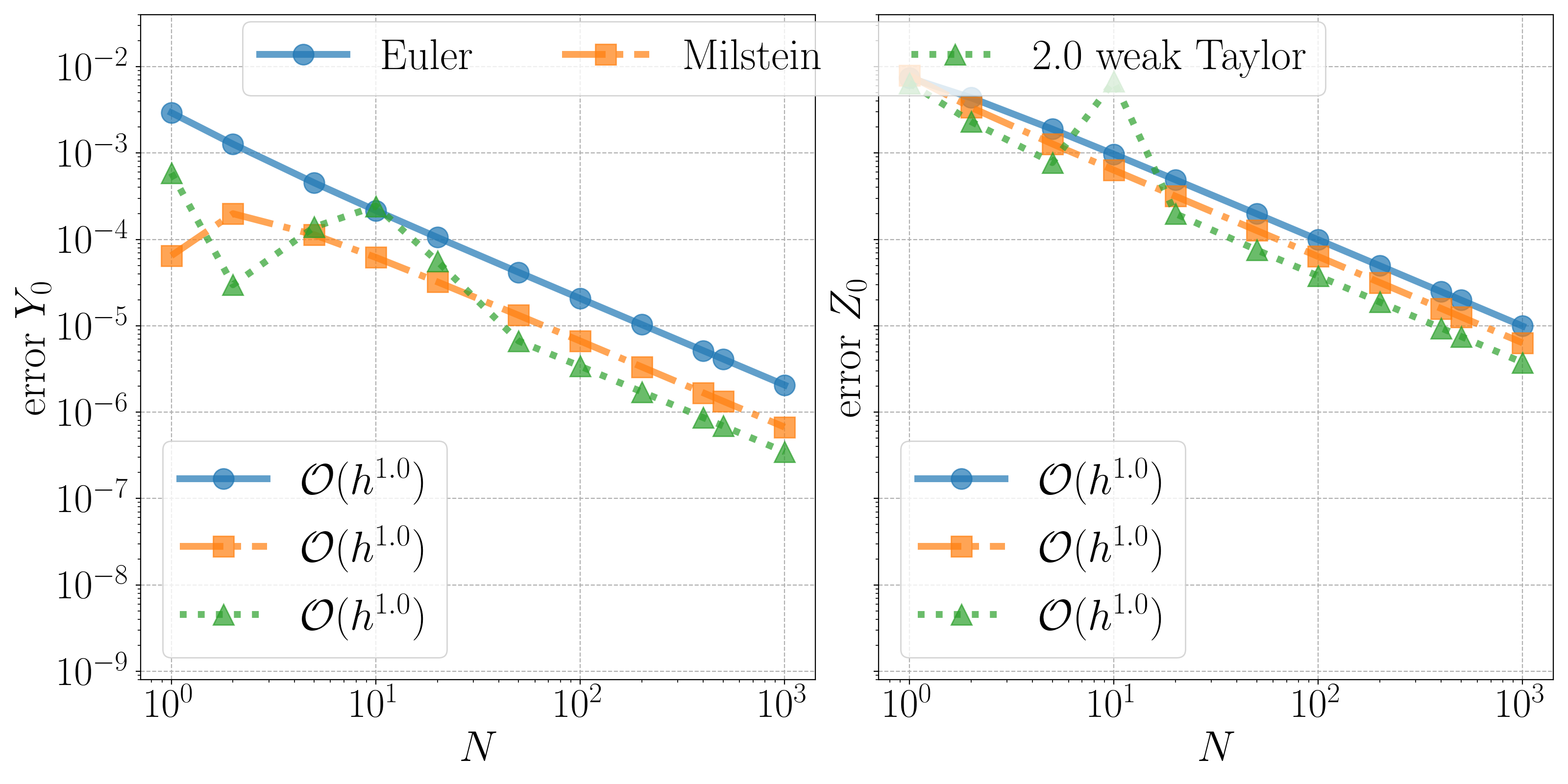}
        \caption{$\theta_4=0$}
        \label{fig:example2:1:weak:crisan}
        \end{subfigure}
        
        \caption{Example 2 in \eqref{eq:example2} with $\kappa_z=0$. Weak convergence at $t_0$. ($\theta_1=\theta_2=\theta_3=1/2$, $K=2^{10}$.)}
        \label{fig:example2:1:weak}
    \end{figure}

    \begin{figure}[t]
        \centering
        \begin{subfigure}[t]{\textwidth}
        \centering\includegraphics[width=\sizeonebyone\linewidth]{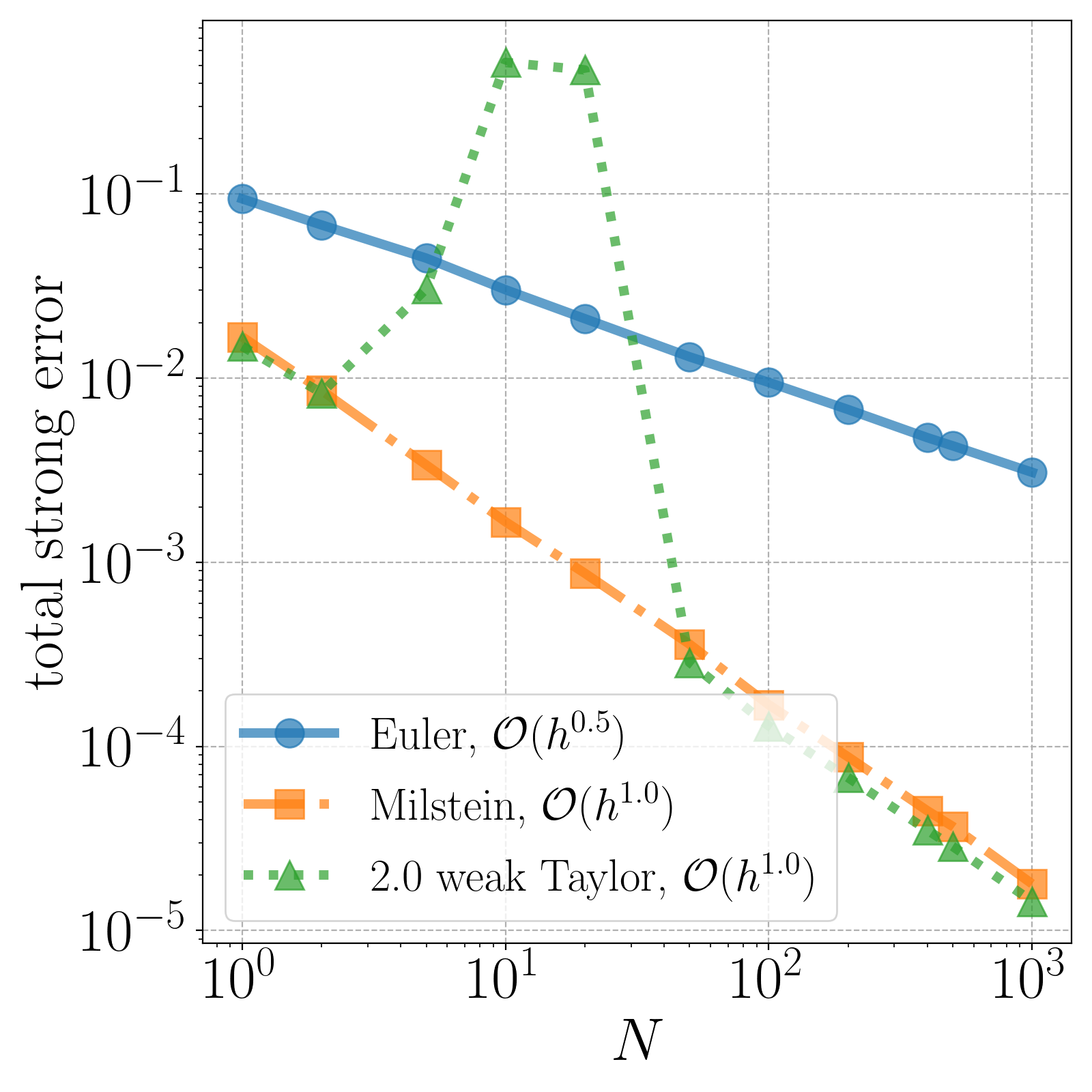}\includegraphics[width=\sizethreebyone\linewidth]{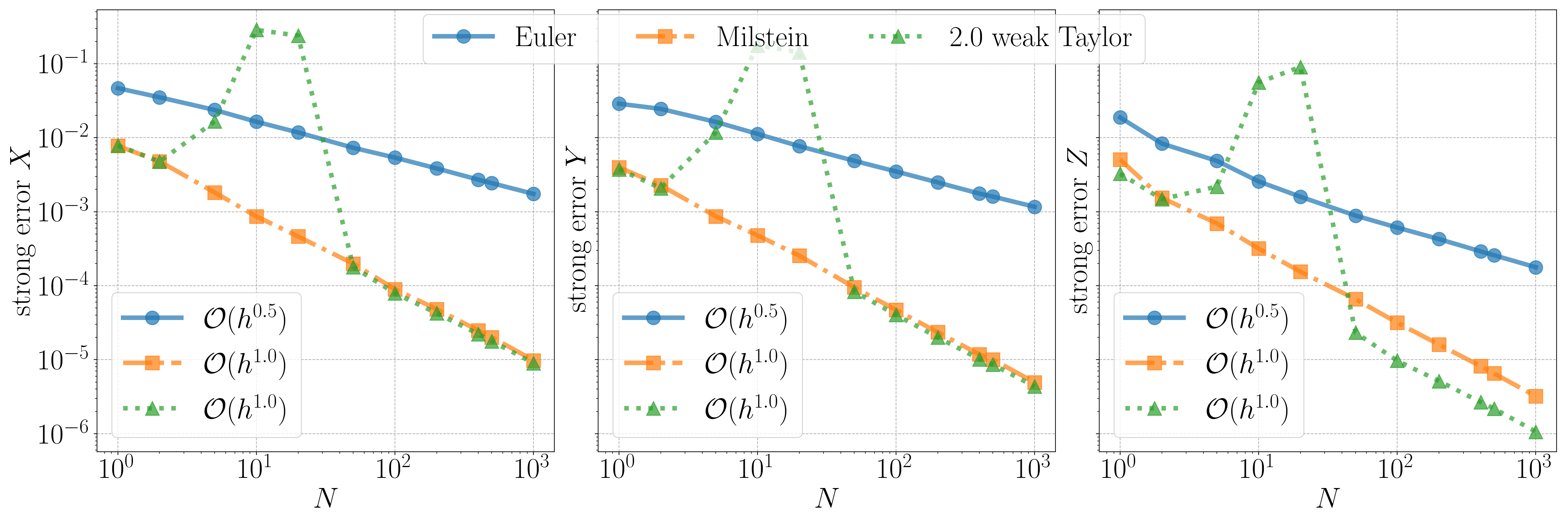}
        \caption{strong convergence}
        \label{fig:example2:2:strong}
        \end{subfigure}
        
        \begin{subfigure}[t]{\textwidth}
        \centering
        \includegraphics[width=\sizeonebyone\linewidth]{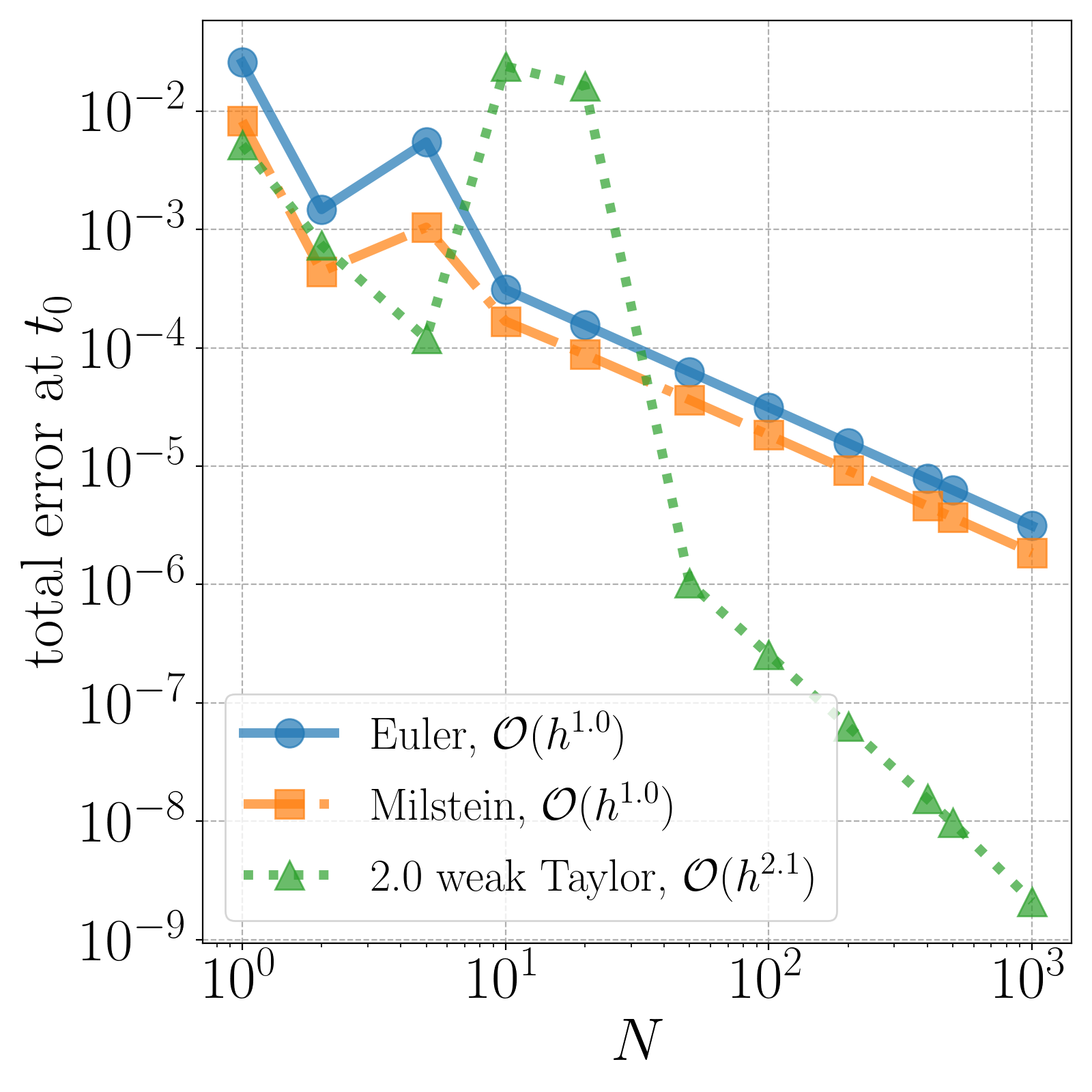}\includegraphics[width=\sizetwobyone\linewidth]{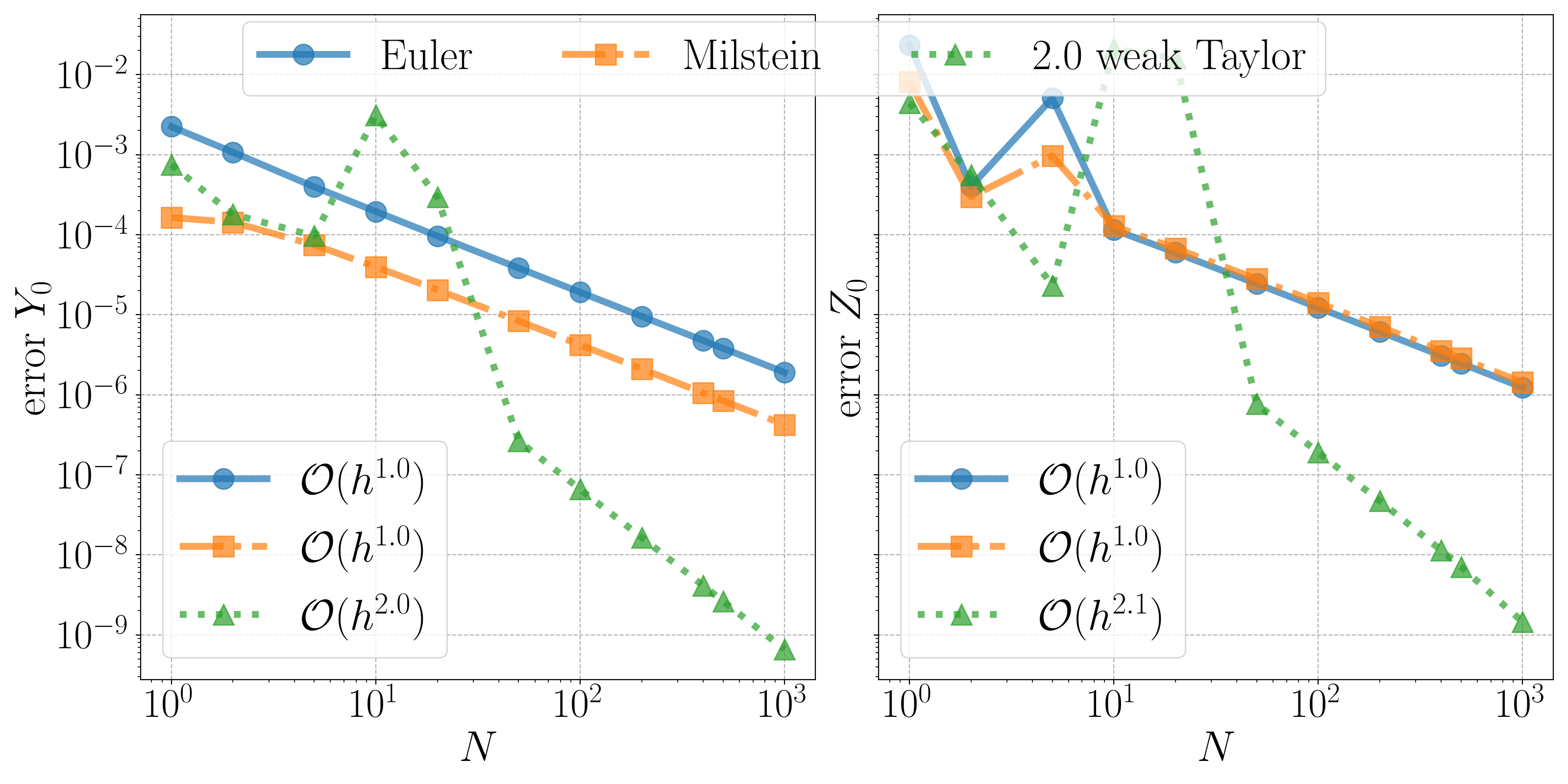}
        \caption{weak convergence at $t_0$}
        \label{fig:example2:2:weak}
        \end{subfigure}
        
        \caption{Example 2 in \eqref{eq:example2}, with $\kappa_z=10^{-2}$. ($\theta_1=\theta_2=\theta_3=1/2$, $\theta_4=-1/2$, $K=2^{10}$.)}
        \label{fig:example2:2}
    \end{figure}
    \begin{table}[t]
        \centering
        
        \begin{tabular}{c|ccccccccccc}
            $K$ & $1$ & $2$ & $2^2$ & $2^3$ & $2^4$ & $2^5$ & $2^6$ & $2^7$ & $2^8$ & $2^9$ & $2^{10}$ \\
            \hline
            Euler & $0.17$ & $0.18$ &	$0.19$ &	$0.20$ &	$0.22$ &	$0.29$ &	$0.56$ & 	$1.63$ &	$5.80$ &	$23.75$	&$107.56$\\
            Milstein & $0.19$	&$0.20$	&$0.21$	&$0.22$	&$0.24$&	$0.33$&	$0.65$&	$1.93$&	$6.89$&	$28.00$&	$126.36$\\
            2.0 weak Taylor &  $0.26$	&$0.27$	&$0.28$	&$0.29$ &	$0.31$ &	$0.40$ &	$0.72$ & 	$2.02$	&$7.02$ &	$28.40$&	$121.63$
        \end{tabular}
        \caption{Example 2 in \eqref{eq:example2}, with $\kappa_z=10^{-2}$. CPU runtime in seconds. ($\theta_1=\theta_2=\theta_3=1/2$, $\theta_4=-1/2$, $N=10^{3}$.)}
        \label{tab:example2:cpu}
    \end{table}

    For the coupling of $Z$ in the drift, we consider two different values for $\kappa_z$:
    \begin{enumerate}
        \item $\kappa_z=0$: the backward equation only couples into the forward equation through the $Y$ process;\label{example2:y_only}
        \item $\kappa_z=10^{-2}$: $Z$ is also coupled into the forward SDE but only through the drift and not the diffusion coefficient.
    \end{enumerate}

    \subsubsection[Coupling only in Y]{Coupling only in $Y$}
    We start by presenting results on the first case, when $\kappa_z=0$, i.e. the $Z$ process does not enter the forward diffusion \eqref{eq:fbsde:sde}. In order to be able to neglect the Fourier truncation error term in the second-order weak convergence of the 2.0 weak Taylor discretization for very fine time grids, we use a larger number of Fourier terms $K=2^{10}$ and remark that the rest of the results are close to identical for significantly smaller number of expansion coefficients.  The strong convergence rates for each process of the solution triple are plotted against the number of discretization points in time in figure \ref{fig:example2:1:strong} for $\theta_4=0$ (fig.\ref{fig:example2:1:strong:zhao}) and $\theta_4=-1/2$ (fig.\ref{fig:example2:1:strong:crisan}). The strong errors across the different backward discretizations are comparable, with an order of magnitude  gain when using $\theta_4=-1/2$ in the approximation of $Z$ while using Milstein or 2.0 weak Taylor forward schemes and fine time partitions. Each discretization exhibits the theoretically expected strong convergence rate predicted in table \ref{tab:convergence-rates} for both $\theta_4$ values. In fact, we recover a strong convergence rate of $\mathcal{O}(h^{1/2})$ in case of the Euler transitions, as in \cite{huijskens_efficient_2016}. However, thanks to the generalization by algorithm \ref{algorithm} to higher order schemes, we manage to improve the strong convergence rate to $\mathcal{O}(h)$ by using the Milstein and 2.0 weak Taylor transitions in \eqref{eq:sde:general_second_order_scheme}, even in the case of coupling in $Y$. In particular, due to the higher order discretizations enabled by algorithm \ref{algorithm}, the Milstein and 2.0 weak Taylor approximations achieve an almost three orders of magnitude higher accuracy in the total strong approximation error than the Euler method in \cite{huijskens_efficient_2016}, when $N=10^3$.

    The main difference between the choices in $\theta_4$ is illustrated by figure \ref{fig:example2:1:weak}, where the weak convergence rates of the approximation errors at $t_0$ are depicted. As can be seen, when the forward diffusion is approximated by either an Euler or Milstein scheme, the weak errors at $t_0$ converge with the expected $\mathcal{O}(h)$ regardless of the value of $\theta_4$. However, the same cannot be said about the 2.0 weak Taylor discretization. In fact, we find that the errors at $t_0$ only show second-order convergence when $\theta_4=-1/2$. With $\theta_4=0$, the accumulating approximation errors from the backward equation result in a slower convergence of the same $\mathcal{O}(h)$ or as with the Euler and Milstein schemes. These findings suggest that in applications where the approximation accuracy at $t_0$ is of special relevance, $\theta_4=-1/2$ may be a preferred choice for the discretization of the backward equation, when the forward transition is modeled with a 2.0 weak Taylor scheme.\footnote{The choice $\theta_4=0$ may be necessary in some applications where the solutions are highly oscillating. In such case with $\theta_4=-1/2$ one could face stability issues, whereas the second-order scheme in \eqref{eq:crisan_scheme} would preserve the convergence rate of the standard backward Euler scheme -- see \cite{crisan_second_2014}. Our results suggest that even in such case, the strong convergence rates can be improved by the second-order Taylor schemes in \eqref{eq:sde:general_second_order_scheme}.}

    \subsubsection[Z coupling in the drift]{$Z$ coupling in the drift}
    Let us consider the second case of \eqref{eq:example2}, corresponding to $\kappa_z=10^{-2}$, i.e. when the $Z$ process enters the dynamics of the forward diffusion \eqref{eq:fbsde:sde} but only through the drift.
    Similar to the previous case, we fix $K=2^{10}$, and as the conclusion on $\theta_4$ is verbatim, we only present results in the case $\theta_4=-1/2$. 

    Numerical results are collected in figure \ref{fig:example2:2}.
    The strong approximation errors are depicted in fig. \ref{fig:example2:2:strong}. As we can see, the Milstein and 2.0 weak Taylor approximation preserve their theoretically expected $\mathcal{O}(h)$ convergence rate -- improving on that of the Euler scheme -- even when $Z$ enters the forward dynamics. For fine time grids such as $N=10^3$ this results in a more than 2 orders of magnitude improvement compared to the method employed in \cite{huijskens_efficient_2016}. Moreover, similarly to the previous case, the theoretically expected weak convergence rates are recovered for each forward discretization for the approximation errors computed at $t_0$, as illustrated by fig. \ref{fig:example2:2:weak}. In fact, we find that whereas the Milstein scheme brings a marginal improvement in the accuracy at $t_0$ compared to the Euler scheme, they both exhibit first order weak convergence. On the other hand, by employing a 2.0 weak Taylor discretization in the forward diffusion's Markov transition as in \eqref{eq:coefficients:2.0-weak-taylor}, one achieves second-order convergence. This results in the approximation accuracy at $t_0$ reaching practically machine accuracy by $N=10^3$, yielding 3 orders of magnitude gain compared to Euler and Milstein.
    As discussed in section \ref{sec:computational_complexity}, this comes with additional computational complexity driven by $2\times 2$ matrix vector multiplications, in order to compute the corresponding first- and second-order derivatives of the decoupling relations in \eqref{eq:derivative_approximations:1} and \eqref{eq:derivative_approximations:2}. Table \ref{tab:example2:cpu} translates this additional computational complexity into CPU time by collecting the total runtime of the BCOS method for each forward discretization and different choices of the truncation of the Fourier series, with a fixed $N=10^3$. Unsurprisingly, the Euler approximations are the fastest as they do not require the computations of the derivatives in \eqref{eq:derivative_approximations:1} and \eqref{eq:derivative_approximations:2}. Nonetheless, the Milstein approximations are executed in merely $20\%$ additional CPU time, while gaining an extra order in strong convergence. Moreover, the 2.0 weak Taylor approximations are the most computationally expensive, as on top of the first order derivatives of the decoupling fields, they also require the computation of the second-order derivatives in \eqref{eq:derivative_approximations:2}. Interestingly, the difference between the CPU times of the Milstein and 2.0 weak Taylor approximations vanishes as $K$ increases, which is explained by the reduced number of Picard iterations in the computation of the implicit conditional expectation of $y$ in order to reach our desired tolerance level -- see discussion in section \ref{sec:computational_complexity}. We remark that the difference in computation times also shows that employing a higher-order Taylor scheme in the approximation of the forward SDE of a coupled FBSDE system is advantegeous compared to Richardson extrapolating at $t_0$ with the Euler scheme as in \cite[sec. 4.3]{huijskens_efficient_2016}. Namely, since Richardson extrapolation would require re-running the method with a finer time grid (e.g. $h/2$) once, doing so the computation time doubles, while for smoothly converging errors at $t_0$ the weak convergence rate is improved to $\mathcal{O}(h^2)$. Nonetheless, Richardson extrapolation does not improve the $\mathcal{O}(h^{1/2})$ strong convergence rate of the Euler scheme , while being both more restrictive and expensive than a 2.0 weak Taylor approximation, as indicated by table \ref{tab:example2:cpu}.

    \subsection{Example 3: fully coupled FBSDE, stochastic optimal control}
    Our final numerical example is a fully-coupled FBSDE system which is related to a linear-quadratic stochastic optimal control problem and is derived from the stochastic maximum principle -- see e.g. \cite{yong_stochastic_1999, pham_continuous-time_2009}. For the derivation of the corresponding FBSDE, we refer to \cite{huang_convergence_2024} and the references therein.
    The coefficients in \eqref{eq:fbsde} read as follows
    \begin{align}\label{eq:example3}
        \begin{split}
            \mu(t, x, y, z)&= \left(A-B\frac{R_{xu}}{{R_u}}\right)x+\frac{B^2}{R_u}y+\frac{BD}{R_u}z+\beta,\\
        \sigma(t, x, y, z)&=\left(C-D\frac{R_{xu}}{{R_u}}\right)x+\frac{DB}{R_u}y + \frac{D^2}{R_u}z+\Sigma,\quad g(x)=-Gx,\\
        f(t, x, y, z)&=\left(A-\frac{BR_{xu}}{R_u}\right)y + \left(C- \frac{DR_{xu}}{R_u}\right)z- \left(R_x - \frac{R_{xu}}{R_u}\right)x.
        \end{split}
    \end{align}
    Notice that, unlike in the previous example, the diffusion coefficient of the forward SDE also takes $Z$ as an argument.
    The semi-analytical solution can be obtained by numerical integration of a system of Ricatti ODEs, with practically arbitrary accuracy -- see \cite[eq. (45)]{huang_convergence_2024}. We take $N'=10^6$ steps in order to compute the reference solution over a refined time partition. As a one-dimensional version of example 1 in \cite{huang_convergence_2024}, we consider the parameter values $A=-1, B=0.1, \beta=0, C=1, D=0.01, \Sigma=0.05, R_x=2, R_{xu}=0, R_u=2, G=2.$ The truncation range in the BCOS approximations is set to $a=-5, b=5$.

    The convergence results are depicted in figure \ref{fig:example3}. In particular, as can be seen in fig. \ref{fig:example3:strong}, each hereby considered forward discretization exhibits its theoretically expected strong convergence rate. In case of the Euler scheme, all processes convergence with a rate of $\mathcal{O}(h^{1/2})$, whereas for the Milstein and 2.0 weak Taylor approximations, this rate is improved to $\mathcal{O}(h)$. Even though the strong convergence rate with the latter two schemes is first-order in both cases, the 2.0 weak Taylor approximations admit an advantageous constant, resulting in an order of magnitude higher overall strong approximation accuracy. 
    The weak convergence errors are collected in fig. \ref{fig:example3:weak}, from which we can draw similar conclusions as for our earlier examples. All forward discretizations preserve their theoretical weak convergence rates to the fully-coupled FBSDE setting of \eqref{eq:example3}. In fact, the Euler and Milstein approximations agree to errors converging with a rate of $\mathcal{O}(h)$ at $t_0$, to which the 2.0 weak Taylor approximation brings a significant improvement by speeding up convergence at $t_0$ to second-order in exchange for a marginally higher total CPU time -- runtimes are comparable to table \ref{tab:example2:cpu} and are thus omitted.

    As equation \eqref{eq:example3} is derived from a stochastic optimal control problem, see \cite{huang_convergence_2024} and the references therein, the approximation accuracy for the solution of the forward SDE is of special importance. As we can see, the methods proposed in the present paper using Milstein and 2.0 weak Taylor approximations for the numerical resolution of the forward diffusion do not only result in a higher order strong convergence rate, but also significantly improve the approximation accuracy for a given $N$ in the controlled forward diffusion. According to fig. \ref{fig:example3:strong}, this results in $2$ orders of magnitude gain in strong approximation accuracy; and almost $4$ orders of magnitude improvement in the approximation quality at $t_0$, when $N=10^3$ compared to the Euler method deployed in \cite{huijskens_efficient_2016}. 

    Finally, in order to assess the influence of the truncation in the Fourier cosine expansions, we collect strong and weak approximations errors in tables \ref{tab:example3:strong} and \ref{tab:example3:weak}, respectively, for different values of $K$ and $N$. Comparing tables \ref{tab:example3:strong:euler} with \ref{tab:example3:strong:milstein}, we find that both the Euler and Milstein discretizations are robust with respect to the number of Fourier coefficients $K$. In particular, from $K=2^7$ the main source of strong approximation errors is the time discretization term. In line with fig. \ref{fig:example3:strong}, the Milstein approximation proposed in this paper yields $2$ orders of magnitude improvement to the total strong approximation error compared to the Euler scheme of \cite{huijskens_efficient_2016}, independently of the choice of $K$.

    Similar conclusions can be drawn from the comparison of tables \ref{tab:example3:weak:milstein} and \ref{tab:example3:weak:2.0-weak-taylor}, which collect the weak approximation errors of the Milstein and 2.0 weak Taylor schemes for the fully-coupled equation \eqref{eq:example3}. In case of the Milstein scheme, the BCOS method is not sensitive to the choice of $K$ and the errors at $t_0$ are dominated by the time discretization. On the other hand, as can be seen from tab. \ref{tab:example3:weak:2.0-weak-taylor}, in case of the 2.0 weak Taylor discretization, due to the higher order convergence, the BCOS method more quickly reaches an error level, where the Fourier truncation becomes prominent. In particular, for the errors in $Z_0$, second-order convergence is not fully reached with $K=2^7$ Fourier terms only, and one can gain an additional order of accuracy by choosing a $K=2^{10}$ large expansion instead.

    Summarizing the implications of tables \ref{tab:example3:strong} and \ref{tab:example3:weak}, the BCOS method in algorithm \ref{algorithm} is robust with respect to the number of Fourier terms $K$ in the cosine expansions, even in the case of fully-coupled FBSDEs. Strong approximations quickly converge to the time discretization errors in $K$. In terms of weak approximation errors at $t_0$ using the 2.0 weak Taylor scheme in the forward SDE, one may need to enlarge the truncated Fourier series in order to preserve second order convergence, as the corresponding time discretization error decays with a faster rate of $\mathcal{O}(h^2)$.

    \begin{table}[t]
        \centering
        \begin{subtable}{\textwidth}
            \centering
            \begin{tabular}{l|ccc|ccc|ccc}
           &  \multicolumn{3}{c}{strong error $X$} & \multicolumn{3}{c}{strong error $Y$} & \multicolumn{3}{c}{strong error $Z$}\\
           $N\setminus K$  & $128$ & $512$ & $1024$ & $128$ & $512$ & $1024$ & $128$ & $512$ & $1024$\\ 
           \hline
           $10$ & $\num{7.7E-03}$ & $\num{7.7E-03}$ & $\num{7.7E-03}$ & $\num{1.5E-02}$ & $\num{1.5E-02}$ & $\num{1.5E-02}$ & $\num{3.3E-03}$ & $\num{3.3E-03}$ & $\num{3.3E-03}$\\
           $100$ & $\num{2.4E-03}$ & $\num{2.4E-03}$ & $\num{2.4E-03}$ & $\num{4.7E-03}$ & $\num{4.7E-03}$ & $\num{4.7E-03}$ & $\num{8.3E-04}$ & $\num{8.3E-04}$ & $\num{8.3E-04}$\\
           $400$ & $\num{1.2E-03}$ & $\num{1.2E-03}$ & $\num{1.2E-03}$ & $\num{2.4E-03}$ & $\num{2.4E-03}$ & $\num{2.4E-03}$ & $\num{4.1E-04}$ & $\num{4.1E-04}$ & $\num{4.1E-04}$\\
           $1000$ & $\num{7.7E-04}$ & $\num{7.7E-04}$ & $\num{7.7E-04}$ & $\num{1.5E-03}$ & $\num{1.5E-03}$ & $\num{1.5E-03}$ & $\num{2.7E-04}$ & $\num{2.7E-04}$ & $\num{2.7E-04}$
        \end{tabular}
        \caption{Euler with \eqref{eq:coefficients:euler}}
        \label{tab:example3:strong:euler}
        \end{subtable}

        \begin{subtable}{\textwidth}
            \centering
            \begin{tabular}{l|ccc|ccc|ccc}
           &  \multicolumn{3}{c}{strong error $X$} & \multicolumn{3}{c}{strong error $Y$} & \multicolumn{3}{c}{strong error $Z$}\\\
           $N\setminus K$  & $128$ & $512$ & $1024$ & $128$ & $512$ & $1024$ & $128$ & $512$ & $1024$\\ 
           \hline
           $10$ & $\num{1.7E-03}$ & $\num{1.7E-03}$ & $\num{1.7E-03}$ & $\num{3.5e-3}$ & $\num{3.5e-3}$ & $\num{3.5e-3}$ & $\num{1.9E-03}$ & $\num{1.9E-03}$ & $\num{1.9E-03}$\\
           $100$ & $\num{1.7E-4}$ & $\num{1.7E-4}$ & $\num{1.7E-4}$ & $\num{3.3E-4}$ & $\num{3.3E-4}$ & $\num{3.3E-4}$ & $\num{1.9E-04}$ & $\num{1.9E-04}$ & $\num{1.9E-04}$\\
           $400$ & $\num{4.3E-05}$ & $\num{4.3E-05}$ & $\num{4.3E-05}$ & $\num{8.7E-05}$ & $\num{8.7E-05}$ & $\num{8.7E-05}$ & $\num{4.9E-05}$ & $\num{4.9E-05}$ & $\num{4.9E-05}$\\
           $1000$ & $\num{1.7E-05}$ & $\num{1.7E-05}$ & $\num{1.7E-05}$ & $\num{3.4E-05}$ & $\num{3.4E-05}$ & $\num{3.4E-05}$ & $\num{2.0E-05}$ & $\num{2.0E-05}$ & $\num{2.0E-05}$
        \end{tabular}
        \caption{Milstein with \eqref{eq:coefficients:milstein}}
        \label{tab:example3:strong:milstein}
        \end{subtable}

        \caption{Example 3 in \eqref{eq:example3}. Strong approximation errors with various forward discretizations in \eqref{eq:sde:general_second_order_scheme}, and for different values of $K$ and $N$. ($\theta_1=\theta_2=\theta_3=1/2$, $\theta_4=-1/2$, $K=2^{10}$.)}
        \label{tab:example3:strong}
    \end{table}

    \begin{table}[t]
        \centering
        
        \begin{subtable}{\textwidth}
            \centering
            \begin{tabular}{l|ccc|ccc}
           &  \multicolumn{3}{c}{error $Y_0$} & \multicolumn{3}{c}{error $Z_0$}\\
           $N\setminus K$  & $128$ & $512$ & $1024$ & $128$ & $512$ & $1024$\\ 
           \hline
           $10$ & $\num{8.8E-4}$ & $\num{8.8E-4}$ & $\num{8.8E-4}$ & $\num{3.4E-3}$ & $\num{3.4E-3}$ & $\num{3.4E-3}$\\
           $100$ & $\num{8.6E-5}$ & $\num{8.6E-5}$ & $\num{8.6E-5}$ & $\num{3.2E-4}$ & $\num{3.2E-4}$ & $\num{3.2E-4}$\\
           $400$ & $\num{2.2E-5}$ & $\num{2.2E-5}$ & $\num{2.2E-5}$ & $\num{7.4E-5}$ & $\num{7.4E-5}$ & $\num{7.4E-5}$\\
           $1000$ & $\num{8.6E-6}$ & $\num{8.6E-6}$ & $\num{8.6E-6}$ & $\num{2.4E-5}$ & $\num{2.4E-5}$ & $\num{2.4E-5}$
        \end{tabular}
        \caption{Milstein with \eqref{eq:coefficients:milstein}}
        \label{tab:example3:weak:milstein}
        \end{subtable}

        \begin{subtable}{\textwidth}
            \centering
            \begin{tabular}{l|ccc|ccc}
           &  \multicolumn{3}{c}{error $Y_0$} & \multicolumn{3}{c}{error $Z_0$}\\
           $N\setminus K$  & $128$ & $512$ & $1024$ & $128$ & $512$ & $1024$\\ 
           \hline
           $10$ & $\num{1.5E-5}$ & $\num{1.5E-5}$ & $\num{2.1E-5}$ & $\num{7.1E-5}$ & $\num{6.9E-5}$ & $\num{2.1E-4}$\\
           $100$ & $\num{8.4E-8}$ & $\num{8.4E-8}$ & $\num{8.4E-8}$ & $\num{1.0E-6}$ & $\num{7.2E-7}$ & $\num{7.1E-7}$\\
           $400$ & $\num{5.2E-8}$ & $\num{5.2E-8}$ & $\num{5.2E-8}$ & $\num{3.2E-7}$ & $\num{6.0E-8}$ & $\num{5.1E-8}$\\
           $1000$ & $\num{2.2E-8}$ & $\num{2.2E-8}$ & $\num{2.2E-8}$ & $\num{2.8E-7}$ & $\num{2.0E-8}$ & $\num{1.1E-8}$
        \end{tabular}
        \caption{2.0 weak Taylor with \eqref{eq:coefficients:2.0-weak-taylor}}
        \label{tab:example3:weak:2.0-weak-taylor}
        \end{subtable}

        \caption{Example 3 in \eqref{eq:example3}. Weak approximation errors at $t_0$, with various forward discretizations in \eqref{eq:sde:general_second_order_scheme}, and for different values of $K$ and $N$. ($\theta_1=\theta_2=\theta_3=1/2$, $\theta_4=-1/2$, $K=2^{10}$.)}
        \label{tab:example3:weak}
    \end{table}

    \begin{figure}[t]
        \centering
        \begin{subfigure}[t]{\textwidth}
        \centering\includegraphics[width=\sizeonebyone\linewidth]{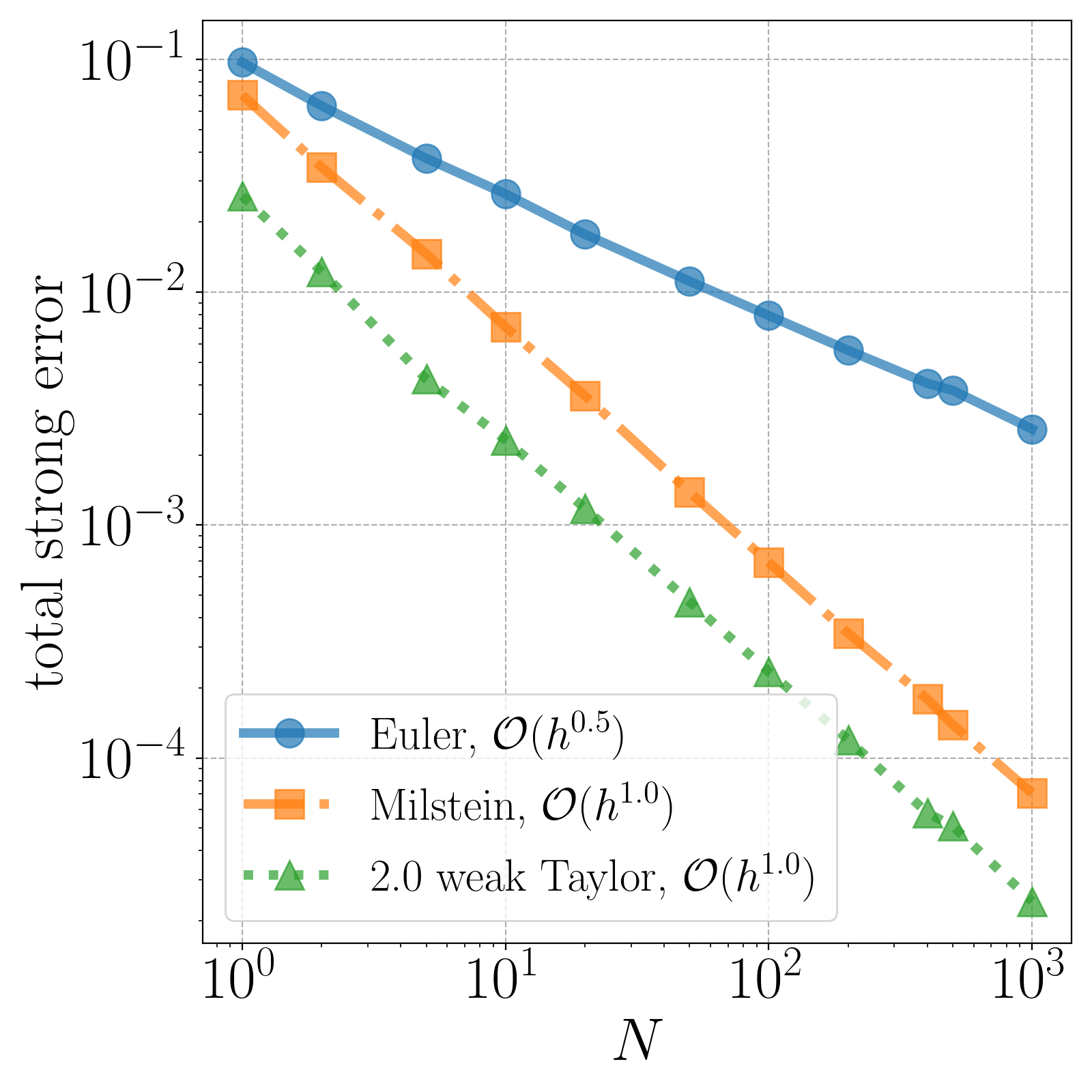}\includegraphics[width=\sizethreebyone\linewidth]{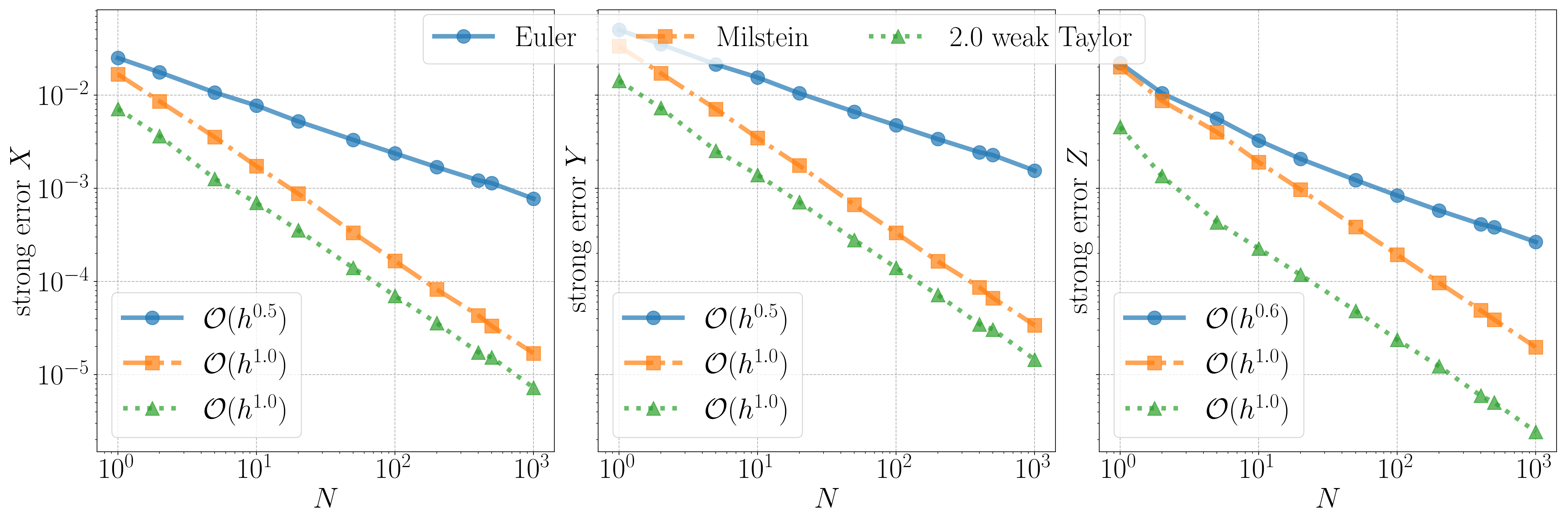}
        \caption{strong convergence}
        \label{fig:example3:strong}
        \end{subfigure}
        
        \begin{subfigure}[t]{\textwidth}
        \centering
        \includegraphics[width=\sizeonebyone\linewidth]{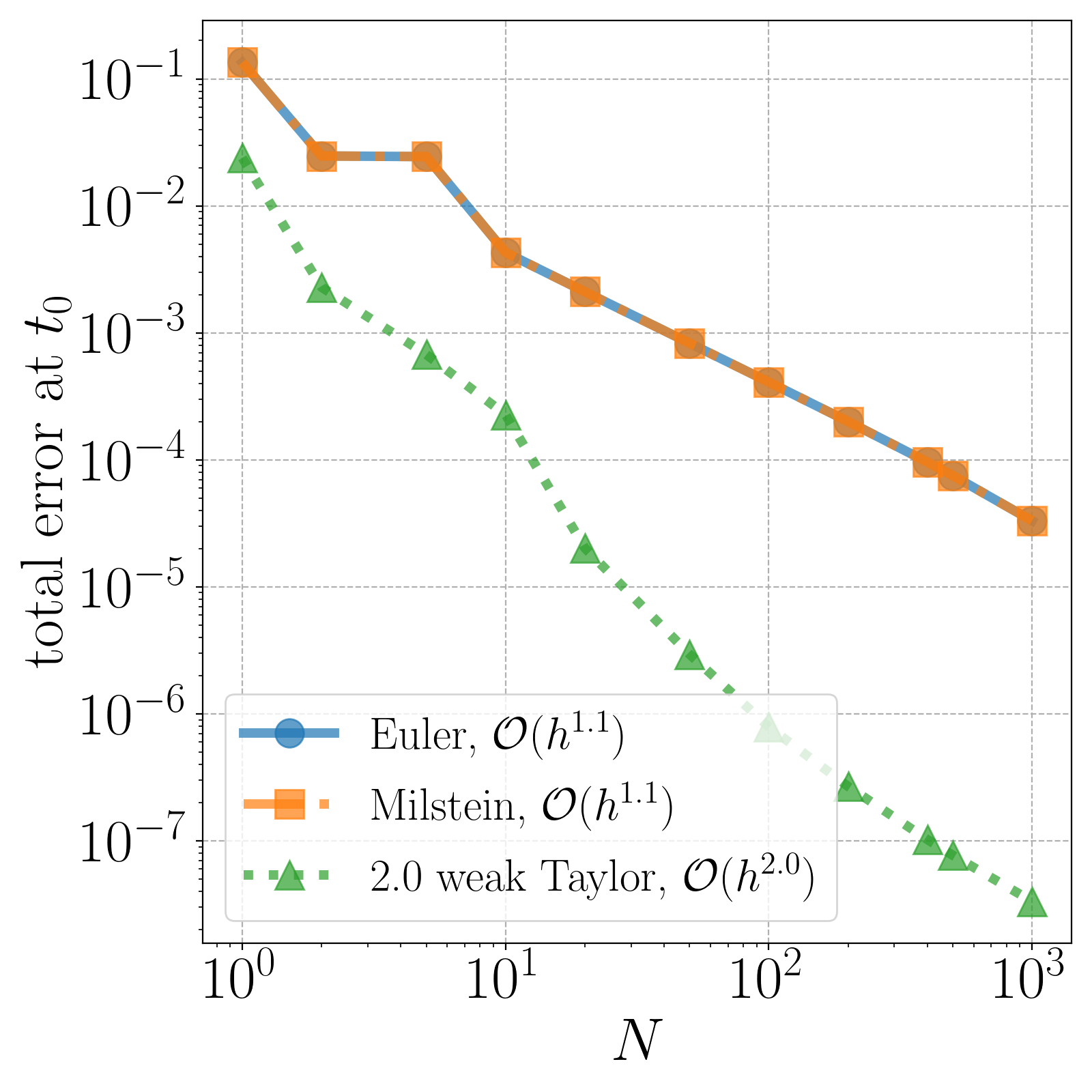}\includegraphics[width=\sizetwobyone\linewidth]{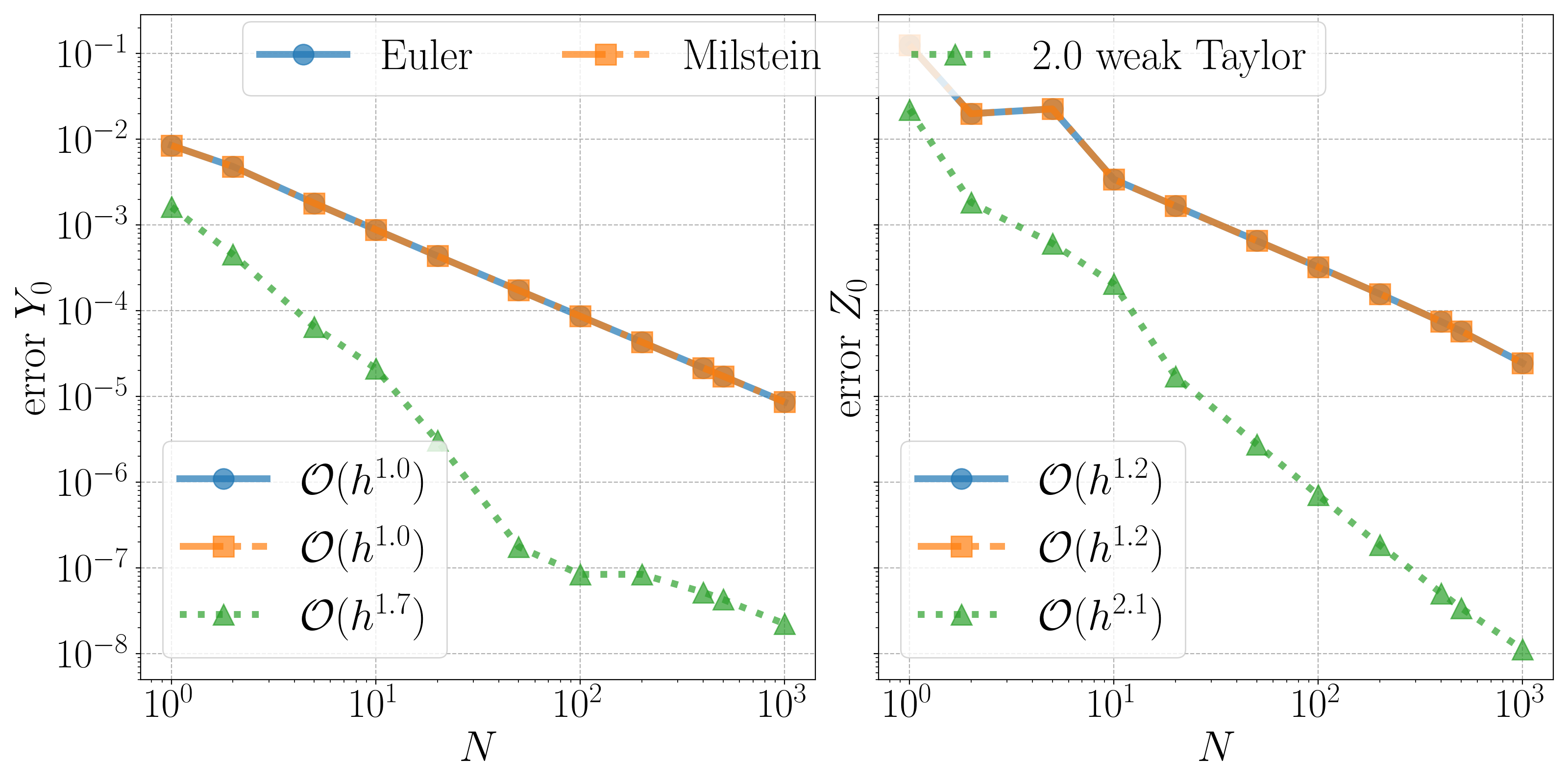}
        \caption{weak convergence at $t_0$}
        \label{fig:example3:weak}
        \end{subfigure}
        
        \caption{Example 3 in \eqref{eq:example3}. ($\theta_1=\theta_2=\theta_3=1/2$, $\theta_4=-1/2$, $K=2^{10}$.)}
        \label{fig:example3}
    \end{figure}
    
    \section{Conclusion}
    In this article, we extended the BCOS method of \cite{ruijter_fourier_2015, ruijter_numerical_2016, huijskens_efficient_2016} to second-order Taylor schemes approximating the Markov transition between two time steps in the fully-coupled FBSDE setting. We presented an algorithmic framework that unifies second-order Taylor schemes for fully-coupled equations, including the Euler-, Milstein- and simplified order 2.0 weak Taylor approximations for the forward SDE; and the generalized theta-scheme of \cite{zhao_generalized_2012} for the BSDE. Building on the closed-form expression for the characteristic function of the corresponding Markov transitions in lemma \ref{lemma:chf}, we extended the coupled BCOS method and gave an implementable, higher-order numerical method for the fully-coupled FBSDEs in algorithm \ref{algorithm}. We demonstrated the robustness and accuracy of our algorithm on a wide range of equations, spanning from the decoupled to the fully-coupled case, and found that the hereby proposed second-order Taylor methods bring an improved first-order strong convergence to the total approximation error of the FBSDE compared to the Euler scheme in \cite{huijskens_efficient_2016}, if the theta parameters of the backward discretization are chosen accordingly. Additionally, we found that the 2.0 weak Taylor discretization further improves the convergence of the approximation errors at $t_0$ to second-order, which is crucial in applications such as stochastic optimal control. On top of the improved accuracy, the methods also proved to be robust with respect to the number of terms in the finitely truncated Fourier expansion.

    \paragraph{Acknowledgments} B.N. acknowledges financial support from the Peter Paul Peterich Foundation via the TU Delft University Fund.
	
	\printbibliography[heading=bibintoc, title={References}]

@book{zhang_backward_2017,
	address = {New York, NY},
	series = {Probability {Theory} and {Stochastic} {Modelling}},
	title = {Backward {Stochastic} {Differential} {Equations}},
	volume = {86},
	url = {http://link.springer.com/10.1007/978-1-4939-7256-2},
	urldate = {2023-01-31},
	publisher = {Springer},
	author = {Zhang, Jianfeng},
	year = {2017},
	doi = {10.1007/978-1-4939-7256-2},
	keywords = {Backward Stochastic Differential Equations, Economic Theory, Quantitative Economics, Mathematical Methods, Game Theory, Economics, Social and Behavioral Science, Mathematical Finance, Nonlinear Expectation, Numerical Analysis, Parabolic Partial Differential Equations, Partial Differential Equations, Path Dependent Partial Differential Equations, Probability Theory and Stochastic Processes, Quantitative Finance, Second Order Backward Stochastic Differential Equations, Stochastic Controls, Stochastic Differential Equations, Viscosity Solutions, Weak Formulation},
	file = {Zhang_2017_Backward Stochastic Differential Equations.pdf:/home/balint/Insync/bnegyesi1@tudelft.nl/OneDrive Biz/Zotero/Zhang_2017_Backward Stochastic Differential Equations.pdf:application/pdf},
}

@book{kloeden_numerical_1992,
	title = {Numerical {Solution} of {Stochastic} {Differential} {Equations}},
	isbn = {978-3-662-12616-5},
	publisher = {Springer},
	author = {Kloeden, Peter E and Platen, Eckhard},
	year = {1992},
	doi = {10.1007/978-3-662-12616-5},
	keywords = {\_tablet},
	file = {Kloeden_Platen_1992_Numerical Solution of Stochastic Differential Equations.pdf:/home/balint/Zotero/storage/G86YQTPK/Kloeden_Platen_1992_Numerical Solution of Stochastic Differential Equations.pdf:application/pdf},
}

@article{ruijter_numerical_2016,
	title = {Numerical {Fourier} method and second-order {Taylor} scheme for backward {SDEs} in finance},
	volume = {103},
	issn = {0168-9274},
	url = {https://www.sciencedirect.com/science/article/pii/S0168927416000040},
	doi = {10.1016/j.apnum.2015.12.003},
	abstract = {We develop a Fourier method to solve quite general backward stochastic differential equations (BSDEs) with second-order accuracy. The underlying forward stochastic differential equation (FSDE) is approximated by different Taylor schemes, such as the Euler, Milstein, and Order 2.0 weak Taylor schemes, or by exact simulation. A θ-time-discretization of the time-integrands leads to an induction scheme with conditional expectations. The computation of the conditional expectations appearing relies on the availability of the characteristic function for these schemes. We will use the characteristic function of the discrete forward process. The expected values are approximated by Fourier cosine series expansions. Numerical experiments show rapid convergence of our efficient probabilistic numerical method. Second-order accuracy is observed and also proved. We apply the method to, among others, option pricing problems under the Constant Elasticity of Variance and Cox–Ingersoll–Ross processes.},
	language = {en},
	urldate = {2021-05-28},
	journal = {Applied Numerical Mathematics},
	author = {Ruijter, M. J. and Oosterlee, C. W.},
	month = may,
	year = {2016},
	keywords = {\_tablet, Backward stochastic differential equations, CEV process, Characteristic function, CIR process, European and Bermudan options, Fourier cosine expansion method, Local volatility, Milstein scheme, Order 2.0 weak Taylor scheme},
	pages = {1--26},
	file = {Ruijter_Oosterlee_2016_Numerical Fourier method and second-order Taylor scheme for backward SDEs in.pdf:/home/balint/Zotero/storage/RBYGBNUC/Ruijter_Oosterlee_2016_Numerical Fourier method and second-order Taylor scheme for backward SDEs in.pdf:application/pdf;Ruijter_Oosterlee_2016_Numerical Fourier method and second-order Taylor scheme for backward SDEs in.pdf:/home/balint/Zotero/storage/6DZDLSRC/Ruijter_Oosterlee_2016_Numerical Fourier method and second-order Taylor scheme for backward SDEs in.pdf:application/pdf;ScienceDirect Snapshot:/home/balint/Zotero/storage/KZHJGFAQ/S0168927416000040.html:text/html},
}

@article{milstein_numerical_2006,
	title = {Numerical {Algorithms} for {Forward}-{Backward} {Stochastic} {Differential} {Equations}},
	volume = {28},
	issn = {1064-8275},
	url = {http://epubs.siam.org/doi/10.1137/040614426},
	doi = {10.1137/040614426},
	abstract = {Efficient numerical algorithms are proposed for a class of forward-backward stochastic differential equations (FBSDEs) connected with semilinear parabolic partial differential equations. As in [J. Douglas, Jr., J. Ma, and P. Protter, Ann. Appl. Probab., 6 (1996), pp. 940-968], the algorithms are based on the known four-step scheme for solving FBSDEs. The corresponding semilinear parabolic equation is solved by layer methods which are constructed by means of a probabilistic approach. The derivatives of the solution u of the semilinear equation are found by finite differences. The forward equation is simulated by mean-square methods of order 1/2 and 1. Corresponding convergence theorems are proved. Along with the algorithms for FBSDEs on a fixed finite time interval, we also construct algorithms for FBSDEs with random terminal time. The results obtained are supported by numerical experiments.},
	number = {2},
	urldate = {2021-09-01},
	journal = {SIAM Journal on Scientific Computing},
	author = {Milstein, G. N. and Tretyakov, M. V.},
	month = jan,
	year = {2006},
	note = {Publisher: Society for Industrial and Applied Mathematics},
	keywords = {60H10, \_tablet, 65C30, forward-backward stochastic differential equations, 60H35; Secondary, 62P05, mean-square convergence, numerical integration, Primary, semilinear partial differential equations of parabolic type},
	pages = {561--582},
	file = {Milstein_Tretyakov_2006_Numerical Algorithms for Forward-Backward Stochastic Differential Equations.pdf:/home/balint/Zotero/storage/GZIFZ8LG/Milstein_Tretyakov_2006_Numerical Algorithms for Forward-Backward Stochastic Differential Equations.pdf:application/pdf},
}

@article{huijskens_efficient_2016,
	title = {Efficient numerical {Fourier} methods for coupled forward–backward {SDEs}},
	volume = {296},
	issn = {0377-0427},
	url = {https://www.sciencedirect.com/science/article/pii/S0377042715005191},
	doi = {10.1016/j.cam.2015.10.019},
	abstract = {We develop three numerical methods to solve coupled forward–backward stochastic differential equations. We propose three different discretization techniques for the forward stochastic differential equation. A theta-discretization of the time-integrands is used to arrive at schemes with conditional expectations. These conditional expectations are approximated by using the COS method, which relies on the availability of the conditional characteristic function of the discrete forward process. The numerical methods are applied to different problems, including a financial problem. Richardson extrapolation is used to obtain more accurate results, resulting in the observation of second-order convergence in the number of time steps. Advantages and disadvantages of each method are compared against each other.},
	language = {en},
	urldate = {2021-07-19},
	journal = {Journal of Computational and Applied Mathematics},
	author = {Huijskens, T. P. and Ruijter, M. J. and Oosterlee, C. W.},
	month = apr,
	year = {2016},
	keywords = {\_tablet, Characteristic function, Coupled forward–backward stochastic differential equations, Cross-hedging, Fourier-cosine expansion method, Richardson extrapolation, Second-order convergence},
	pages = {593--612},
	file = {Huijskens et al_2016_Efficient numerical Fourier methods for coupled forward–backward SDEs.pdf:/home/balint/Zotero/storage/PVSBDRTW/Huijskens et al_2016_Efficient numerical Fourier methods for coupled forward–backward SDEs.pdf:application/pdf;ScienceDirect Snapshot:/home/balint/Zotero/storage/ZB7RGI2M/S0377042715005191.html:text/html},
}

@inproceedings{pardoux_backward_1992,
	address = {Berlin, Heidelberg},
	series = {Lecture {Notes} in {Control} and {Information} {Sciences}},
	title = {Backward stochastic differential equations and quasilinear parabolic partial differential equations},
	isbn = {978-3-540-47015-1},
	doi = {10.1007/BFb0007334},
	language = {en},
	booktitle = {Stochastic {Partial} {Differential} {Equations} and {Their} {Applications}},
	publisher = {Springer},
	author = {Pardoux, E. and Peng, S.},
	editor = {Rozovskii, Boris L. and Sowers, Richard B.},
	year = {1992},
	keywords = {\_tablet, Stochastic Differential Equation, Order Partial Derivative, Stochastic Calculus, Stochastic Partial Differential Equation, Viscosity Solution},
	pages = {200--217},
	file = {Pardoux_Peng_1992_Backward stochastic differential equations and quasilinear parabolic partial.pdf:/home/balint/Zotero/storage/WZ378Z3P/Pardoux_Peng_1992_Backward stochastic differential equations and quasilinear parabolic partial.pdf:application/pdf},
}

@article{fang_novel_2009,
	title = {A {Novel} {Pricing} {Method} for {European} {Options} {Based} on {Fourier}-{Cosine} {Series} {Expansions}},
	volume = {31},
	issn = {1064-8275},
	url = {https://epubs.siam.org/doi/abs/10.1137/080718061},
	doi = {10.1137/080718061},
	abstract = {Here we develop an option pricing method for European options based on the Fourier-cosine series and call it the COS method. The key insight is in the close relation of the characteristic function with the series coefficients of the Fourier-cosine expansion of the density function. In most cases, the convergence rate of the COS method is exponential and the computational complexity is linear. Its range of application covers underlying asset processes for which the characteristic function is known and various types of option contracts. We will present the method and its applications in two separate parts. The first one is this paper, where we deal with European options in particular. In a follow-up paper we will present its application to options with early-exercise features.},
	number = {2},
	urldate = {2021-07-19},
	journal = {SIAM Journal on Scientific Computing},
	author = {Fang, F. and Oosterlee, C. W.},
	month = jan,
	year = {2009},
	note = {Publisher: Society for Industrial and Applied Mathematics},
	keywords = {option pricing, \_tablet, 91B28, 62P05, 42A10, 60E10, 65T40, European options, Fourier-cosine expansion},
	pages = {826--848},
	file = {Fang_Oosterlee_2009_A Novel Pricing Method for European Options Based on Fourier-Cosine Series.pdf:/home/balint/Zotero/storage/Y2PE9SD9/Fang_Oosterlee_2009_A Novel Pricing Method for European Options Based on Fourier-Cosine Series.pdf:application/pdf},
}

@article{zhao_generalized_2012,
	title = {A generalized theta-scheme for solving backward stochastic differential equations},
	volume = {17},
	doi = {10.3934/dcdsb.2012.17.1585},
	number = {1531-3492},
	journal = {Discrete \& Continuous Dynamical Systems - B},
	author = {Zhao, Weidong and Li, Yang and Zhang, Guannan},
	year = {2012},
	keywords = {\_tablet},
	pages = {1585--1585},
	file = {Zhao et al_2012_A generalized \$θ\$-scheme for solving backward stochastic differential equations.pdf:/home/balint/Zotero/storage/4WD8GWSK/Zhao et al_2012_A generalized \$θ\$-scheme for solving backward stochastic differential equations.pdf:application/pdf},
}

@article{ruijter_fourier_2015,
	title = {A {Fourier} {Cosine} {Method} for an {Efficient} {Computation} of {Solutions} to {BSDEs}},
	volume = {37},
	doi = {10.1137/130913183},
	number = {2},
	journal = {SIAM Journal on Scientific Computing},
	author = {Ruijter, M J and Oosterlee, C W},
	year = {2015},
	keywords = {60H35, 91G60, \_tablet, backward stochastic differential equations, 65C30, Fourier cosine expansion method, 60E10, European options, 65T50, jump-diffusion process, market imperfections, utility indifference pricing},
	pages = {A859--A889},
	file = {Ruijter_Oosterlee_2015_A Fourier Cosine Method for an Efficient Computation of Solutions to BSDEs.pdf:/home/balint/Zotero/storage/SZL9E6LJ/Ruijter_Oosterlee_2015_A Fourier Cosine Method for an Efficient Computation of Solutions to BSDEs.pdf:application/pdf},
}

@article{delarue_forward_2006,
	title = {A forward - {Backward} stochastic algorithm for quasi-linear {PDES}},
	doi = {10.1214/105051605000000674},
	abstract = {We propose a time-space discretization scheme for quasi-linear parabolic PDEs. The algorithm relies on the theory of fully coupled forward - backward SDKs, which provides an efficient probabilistic representation of this type of equation. The derivated algorithm holds for strong solutions defined on any interval of arbitrary length. As a bypass product, we obtain a discretization procedure for the underlying FBSDE. In particular, our work provides an alternative to the method described in [Douglas, Ma and Protter (1996) Ann. Appl. Probab. 6 940-968] and weakens the regularity assumptions required in this reference. © Institute of Mathematical Statistics, 2006.},
	journal = {The Annals of Applied Probability},
	author = {Delarue, François and Menozzi, Stéphane},
	year = {2006},
	keywords = {Quantization, 60H35, 60H10, \_tablet, 65C30, Discretization scheme, FBSDEs, Quasi-linear PDEs, 35K55, quantization, quasi-linear PDEs},
	file = {Delarue_Menozzi_2006_A forward–backward stochastic algorithm for quasi-linear PDEs.pdf:/home/balint/Insync/bnegyesi1@tudelft.nl/OneDrive Biz/Zotero/Delarue_Menozzi_2006_A forward–backward stochastic algorithm for quasi-linear PDEs.pdf:application/pdf;Delarue_Menozzi_2006_A forward–backward stochastic algorithm for quasi-linear PDEs.pdf:/home/balint/Insync/bnegyesi1@tudelft.nl/OneDrive Biz/Zotero/Delarue_Menozzi_2006_A forward–backward stochastic algorithm for quasi-linear PDEs2.pdf:application/pdf},
}

@article{bouchard_discrete-time_2004,
	title = {Discrete-time approximation and {Monte}-{Carlo} simulation of backward stochastic differential equations},
	volume = {111},
	issn = {0304-4149},
	url = {https://www.sciencedirect.com/science/article/pii/S0304414904000031},
	doi = {10.1016/j.spa.2004.01.001},
	abstract = {We suggest a discrete-time approximation for decoupled forward–backward stochastic differential equations. The Lp norm of the error is shown to be of the order of the time step. Given a simulation-based estimator of the conditional expectation operator, we then suggest a backward simulation scheme, and we study the induced Lp error. This estimate is more investigated in the context of the Malliavin approach for the approximation of conditional expectations. Extensions to the reflected case are also considered.},
	language = {en},
	number = {2},
	urldate = {2021-09-02},
	journal = {Stochastic Processes and their Applications},
	author = {Bouchard, Bruno and Touzi, Nizar},
	month = jun,
	year = {2004},
	keywords = {Malliavin calculus, \_tablet, Monte-Carlo methods for (reflected) forward–backward SDEs, Regression estimation},
	pages = {175--206},
	file = {Bouchard_Touzi_2004_Discrete-time approximation and Monte-Carlo simulation of backward stochastic.pdf:/home/balint/Zotero/storage/WR6KAXI9/Bouchard_Touzi_2004_Discrete-time approximation and Monte-Carlo simulation of backward stochastic.pdf:application/pdf;Bouchard_Touzi_2004_Discrete-time approximation and Monte-Carlo simulation of backward stochastic.pdf:/home/balint/Zotero/storage/6XCGX9RI/Bouchard_Touzi_2004_Discrete-time approximation and Monte-Carlo simulation of backward stochastic.pdf:application/pdf;ScienceDirect Snapshot:/home/balint/Zotero/storage/4RKV3WVS/S0304414904000031.html:text/html;ScienceDirect Snapshot:/home/balint/Zotero/storage/VCWI9TVC/S0304414904000031.html:text/html;ScienceDirect Snapshot:/home/balint/Zotero/storage/TPSAPJ8A/S0304414904000031.html:text/html},
}

@book{ma_forward-backward_2007,
	address = {Berlin, Heidelberg},
	series = {Lecture {Notes} in {Mathematics}},
	title = {Forward-{Backward} {Stochastic} {Differential} {Equations} and their {Applications}},
	volume = {1702},
	url = {http://link.springer.com/10.1007/978-3-540-48831-6},
	language = {en},
	urldate = {2021-11-23},
	publisher = {Springer Berlin Heidelberg},
	author = {Ma, Jin and Yong, Jiongmin},
	editor = {Morel, J.-M. and Takens, F. and Teissier, B.},
	year = {2007},
	doi = {10.1007/978-3-540-48831-6},
	keywords = {quantitative finance, Backward Stochastic Partial Differential Equations, Black's Consol Rate Conjecture, Boundary value problem, Forward-Backward Stochastic Differential Equations, Four Step Scheme, Nodal Solutions, partial differential equation},
	file = {Ma_Yong_2007_Forward-Backward Stochastic Differential Equations and their Applications.pdf:/home/balint/Insync/bnegyesi1@tudelft.nl/OneDrive Biz/Zotero/Ma_Yong_2007_Forward-Backward Stochastic Differential Equations and their Applications.pdf:application/pdf},
}

@article{ruijter_two-dimensional_2012,
	title = {Two-{Dimensional} {Fourier} {Cosine} {Series} {Expansion} {Method} for {Pricing} {Financial} {Options}},
	volume = {34},
	issn = {1064-8275},
	url = {http://epubs.siam.org/doi/10.1137/120862053},
	doi = {10.1137/120862053},
	abstract = {The COS method for pricing European and Bermudan options with one underlying asset was developed in [F. Fang and C. W. Oosterlee, SIAM J. Sci. Comput., 31 (2008), pp. 826--848] and [F. Fang and C. W. Oosterlee, Numer. Math., 114 (2009), pp. 27--62]. In this paper, we extend the method to higher dimensions, with a multidimensional asset price process. The algorithm can be applied to, for example, pricing two-color rainbow options but also to pricing under the popular Heston stochastic volatility model. For smooth density functions, the resulting method converges exponentially in the number of terms in the Fourier cosine series summations; otherwise we achieve algebraic convergence. The use of an FFT algorithm, for asset prices modeled by Lévy processes, makes the algorithm highly efficient. We perform extensive numerical experiments.},
	number = {5},
	urldate = {2021-11-18},
	journal = {SIAM Journal on Scientific Computing},
	author = {Ruijter, M. J. and Oosterlee, C. W.},
	month = jan,
	year = {2012},
	note = {Publisher: Society for Industrial and Applied Mathematics},
	keywords = {91G60, Lévy process, European and Bermudan options, Fourier cosine expansion method, 62P05, 60E10, 65T40, 65T50, basket options, Heston dynamics, two-color rainbow options},
	pages = {B642--B671},
	file = {Ruijter_Oosterlee_2012_Two-Dimensional Fourier Cosine Series Expansion Method for Pricing Financial.pdf:/home/balint/Insync/bnegyesi1@tudelft.nl/OneDrive Biz/Zotero/Ruijter_Oosterlee_2012_Two-Dimensional Fourier Cosine Series Expansion Method for Pricing Financial.pdf:application/pdf},
}

@article{ma_solving_1994,
	title = {Solving forward-backward stochastic differential equations explicitly — a four step scheme},
	volume = {98},
	issn = {1432-2064},
	url = {https://doi.org/10.1007/BF01192258},
	doi = {10.1007/BF01192258},
	abstract = {In this paper we investigate the nature of the adapted solutions to a class of forward-backward stochastic differential equations (SDEs for short) in which the forward equation is non-degenerate. We prove that in this case the adapted solution can always be sought in an “ordinary” sense over an arbitrarily prescribed time duration, via a direct “Four Step Scheme”. Using this scheme, we further prove that the backward components of the adapted solution are determined explicitly by the forward components via the solution of a certain quasilinear parabolic PDE system. Moreover the uniqueness of the adapted solutions (over an arbitrary time duration), as well as the continuous dependence of the solutions on the parameters, can all be proved within this unified framework. Some special cases are studied separately. In particular, we derive a new form of the integral representation of the Clark-Haussmann-Ocone type for functionals (or functions) of diffusions, in which the conditional expectation is no longer needed.},
	language = {en},
	number = {3},
	urldate = {2021-09-30},
	journal = {Probability Theory and Related Fields},
	author = {Ma, Jin and Protter, Philip and Yong, Jiongmin},
	month = sep,
	year = {1994},
	keywords = {\_tablet},
	pages = {339--359},
	file = {Ma et al_1994_Solving forward-backward stochastic differential equations explicitly — a four.pdf:/home/balint/Zotero/storage/BMM73QVT/Ma et al_1994_Solving forward-backward stochastic differential equations explicitly — a four.pdf:application/pdf},
}

@book{yong_stochastic_1999,
	address = {New York, NY},
	title = {Stochastic {Controls}},
	url = {http://link.springer.com/10.1007/978-1-4612-1466-3},
	urldate = {2023-02-20},
	publisher = {Springer},
	author = {Yong, Jiongmin and Zhou, Xun Yu},
	year = {1999},
	doi = {10.1007/978-1-4612-1466-3},
	keywords = {Stochastic processes, Martingale, Stochastic calculus, stochastic process, Variance},
	file = {Yong_Zhou_1999_Stochastic Controls.pdf:/home/balint/Insync/bnegyesi1@tudelft.nl/OneDrive Biz/Zotero/Yong_Zhou_1999_Stochastic Controls2.pdf:application/pdf},
}

@book{pham_continuous-time_2009,
	address = {Berlin, Heidelberg},
	series = {Stochastic {Modelling} and {Applied} {Probability}},
	title = {Continuous-time {Stochastic} {Control} and {Optimization} with {Financial} {Applications}},
	volume = {61},
	url = {https://link.springer.com/10.1007/978-3-540-89500-8},
	language = {en},
	urldate = {2023-02-21},
	publisher = {Springer},
	author = {Pham, Huyên},
	year = {2009},
	doi = {10.1007/978-3-540-89500-8},
	keywords = {dynamic programming, quantitative finance, Stochastic Differential Equations, backward stochastic differential equations, optimization, Martingale, duality, finance, Optimization Methods, stochastic Optimisation, Stochastic Optimization},
	file = {Pham_2009_Continuous-time Stochastic Control and Optimization with Financial Applications.pdf:/home/balint/Insync/bnegyesi1@tudelft.nl/OneDrive Biz/Zotero/Pham_2009_Continuous-time Stochastic Control and Optimization with Financial Applications.pdf:application/pdf},
}

@article{bender_time_2008,
	title = {Time discretization and {Markovian} iteration for coupled {FBSDEs}},
	volume = {18},
	issn = {1050-5164, 2168-8737},
	url = {https://projecteuclid.org/journals/annals-of-applied-probability/volume-18/issue-1/Time-discretization-and-Markovian-iteration-for-coupled-FBSDEs/10.1214/07-AAP448.full},
	doi = {10.1214/07-AAP448},
	abstract = {In this paper we lay the foundation for a numerical algorithm to simulate high-dimensional coupled FBSDEs under weak coupling or monotonicity conditions. In particular, we prove convergence of a time discretization and a Markovian iteration. The iteration differs from standard Picard iterations for FBSDEs in that the dimension of the underlying Markovian process does not increase with the number of iterations. This feature seems to be indispensable for an efficient iterative scheme from a numerical point of view. We finally suggest a fully explicit numerical algorithm and present some numerical examples with up to 10-dimensional state space.},
	number = {1},
	urldate = {2023-02-22},
	journal = {The Annals of Applied Probability},
	author = {Bender, Christian and Zhang, Jianfeng},
	month = feb,
	year = {2008},
	note = {Publisher: Institute of Mathematical Statistics},
	keywords = {60H10, 60H30, 65C05, Monte Carlo simulation, 65C30, Forward–backward SDE, numerics, time discretization},
	pages = {143--177},
	file = {Bender_Zhang_2008_Time discretization and Markovian iteration for coupled FBSDEs.pdf:/home/balint/Insync/bnegyesi1@tudelft.nl/OneDrive Biz/Zotero/Bender_Zhang_2008_Time discretization and Markovian iteration for coupled FBSDEs.pdf:application/pdf;Bender_Zhang_2008_Time discretization and Markovian iteration for coupled FBSDEs.pdf:/home/balint/Insync/bnegyesi1@tudelft.nl/OneDrive Biz/Zotero/Bender_Zhang_2008_Time discretization and Markovian iteration for coupled FBSDEs2.pdf:application/pdf},
}

@phdthesis{fromm_theory_2015,
	title = {Theory and applications of decoupling fields for forward-backward stochastic differential equations},
	abstract = {This thesis deals with the theory of so called forward-backward stochastic differential equations (FBSDE) which can be seen as a stochastic formulation and in some sense generalization of parabolic quasi-linear partial differential equations. The thesis consist of two parts: In the first we develop the theory of so called decoupling fields for general multidimensional fully coupled FBSDE in a Brownian setting. The theory consists of uniqueness and existence results for decoupling fields on the so called the maximal interval. It also provides tools to investigate well-posedness and regularity for particular problems. In total the theory is developed for three different classes of FBSDE: In the first Lipschitz continuity of the parameter functions is required, which at the same time are allowed to be random. The other two classes we investigate are based on the theory developed for the first one. In both of them all parameter functions have to be deterministic. However, two different types of local Lipschitz continuity replace the more restrictive Lipschitz continuity of the first class. In the second part we apply these techniques to three different problems: In the first application we demonstrate how well-posedness of FBSDE in the so called non-degenerate case can be investigated. As a second application we demonstrate the solvability of a system, which provides a solution to the so called Skorokhod embedding problem (SEP) via FBSDE. The solution to the SEP is provided for the case of general non-linear drift. The third application provides solutions to a complex FBSDE from which optimal trading strategies for a problem of utility maximization in incomplete markets are constructed. The FBSDE is solved in a relatively general setting, i.e. for a relatively general class of utility functions on the real line.},
	author = {Fromm, Alexander},
	month = jan,
	year = {2015},
	doi = {10.13140/RG.2.1.5056.9129},
	file = {Fromm_2015_Theory and applications of decoupling fields for forward-backward stochastic.pdf:/home/balint/Insync/bnegyesi1@tudelft.nl/OneDrive Biz/Zotero/Fromm_2015_Theory and applications of decoupling fields for forward-backward stochastic.pdf:application/pdf},
}

@article{ruijter_fourier_2013,
	title = {On the {Fourier} cosine series expansion method for stochastic control problems},
	volume = {20},
	issn = {1099-1506},
	url = {https://onlinelibrary.wiley.com/doi/abs/10.1002/nla.1866},
	doi = {10.1002/nla.1866},
	abstract = {SUMMARYWe develop a method for solving stochastic control problems under one-dimensional Lévy processes. The method is based on the dynamic programming principle and a Fourier cosine expansion method. Local errors in the vicinity of the domain boundaries may disrupt the algorithm. For efficient computation of matrix–vector products with Hankel and Toeplitz structures, we use a fast Fourier transform algorithm. An extensive error analysis provides new insights based on which we develop an extrapolation method to deal with the propagation of local errors. Copyright © 2013 John Wiley \& Sons, Ltd.},
	language = {en},
	number = {4},
	urldate = {2023-03-15},
	journal = {Numerical Linear Algebra with Applications},
	author = {Ruijter, M.j. and Oosterlee, C.w. and Aalbers, R.f.t.},
	year = {2013},
	note = {\_eprint: https://onlinelibrary.wiley.com/doi/pdf/10.1002/nla.1866},
	keywords = {Fourier cosine expansion method, dynamic programming principle, error analysis, extrapolation, portfolio-selection problem, stochastic control problems},
	pages = {598--625},
	file = {Ruijter et al_2013_On the Fourier cosine series expansion method for stochastic control problems.pdf:/home/balint/Insync/bnegyesi1@tudelft.nl/OneDrive Biz/Zotero/Ruijter et al_2013_On the Fourier cosine series expansion method for stochastic control problems.pdf:application/pdf;Snapshot:/home/balint/Zotero/storage/EZV7U9GR/nla.html:text/html},
}

@article{cvitanic_steepest_2005,
	title = {The {Steepest} {Descent} {Method} for {Forward}-{Backward} {SDEs}},
	volume = {10},
	issn = {1083-6489, 1083-6489},
	url = {https://projecteuclid.org/journals/electronic-journal-of-probability/volume-10/issue-none/The-Steepest-Descent-Method-for-Forward-Backward-SDEs/10.1214/EJP.v10-295.full},
	doi = {10.1214/EJP.v10-295},
	abstract = {This paper aims to open a door to Monte-Carlo methods for numerically solving Forward-Backward SDEs, without computing over all Cartesian grids as usually done in the literature. We transform the FBSDE to a control problem and propose the steepest descent method to solve the latter one. We show that the original (coupled) FBSDE can be approximated by decoupled FBSDEs, which further comes down to computing a sequence of conditional expectations. The rate of convergence is obtained, and the key to its proof is a new well-posedness result for FBSDEs. However, the approximating decoupled FBSDEs are non-Markovian. Some Markovian type of modification is needed in order to make the algorithm efficiently implementable.},
	number = {none},
	urldate = {2023-03-31},
	journal = {Electronic Journal of Probability},
	author = {Cvitanic, Jaksa and Zhang, Jianfeng},
	month = jan,
	year = {2005},
	note = {Publisher: Institute of Mathematical Statistics and Bernoulli Society},
	pages = {1468--1495},
	file = {Cvitanic_Zhang_2005_The Steepest Descent Method for Forward-Backward SDEs.pdf:/home/balint/Insync/bnegyesi1@tudelft.nl/OneDrive Biz/Zotero/Cvitanic_Zhang_2005_The Steepest Descent Method for Forward-Backward SDEs.pdf:application/pdf},
}

@article{huang_convergence_2024,
	title = {Convergence of the deep {BSDE} method for stochastic control problems formulated through the stochastic maximum principle},
	issn = {0378-4754},
	url = {https://www.sciencedirect.com/science/article/pii/S0378475424003045},
	doi = {10.1016/j.matcom.2024.08.002},
	abstract = {It is well-known that decision-making problems from stochastic control can be formulated by means of a forward–backward stochastic differential equation (FBSDE). Recently, the authors of Ji et al. (2022) proposed an efficient deep learning algorithm based on the stochastic maximum principle (SMP). In this paper, we provide a convergence result for this deep SMP-BSDE algorithm and compare its performance with other existing methods. In particular, by adopting a strategy as in Han and Long (2020), we derive a-posteriori estimate, and show that the total approximation error can be bounded by the value of the loss functional and the discretization error. We present numerical examples for high-dimensional stochastic control problems, both in the cases of drift- and diffusion control, which showcase superior performance compared to existing algorithms.},
	urldate = {2024-09-04},
	journal = {Mathematics and Computers in Simulation},
	author = {Huang, Zhipeng and Negyesi, Balint and Oosterlee, Cornelis W.},
	month = aug,
	year = {2024},
	keywords = {Stochastic maximum principle, Stochastic control, Deep SMP-BSDE, Vector-valued FBSDE},
	file = {ScienceDirect Snapshot:/home/bnegyesi1/Zotero/storage/NJ3VWHSF/S0378475424003045.html:text/html},
}

@misc{negyesi_generalized_2024,
	title = {Generalized convergence of the deep {BSDE} method: a step towards fully-coupled {FBSDEs} and applications in stochastic control},
	shorttitle = {Generalized convergence of the deep {BSDE} method},
	url = {http://arxiv.org/abs/2403.18552},
	doi = {10.48550/arXiv.2403.18552},
	abstract = {We are concerned with high-dimensional coupled FBSDE systems approximated by the deep BSDE method of Han et al. (2018). It was shown by Han and Long (2020) that the errors induced by the deep BSDE method admit a posteriori estimate depending on the loss function, whenever the backward equation only couples into the forward diffusion through the Y process. We generalize this result to fully-coupled drift coefficients, and give sufficient conditions for convergence under standard assumptions. The resulting conditions are directly verifiable for any equation. Consequently, unlike in earlier theory, our convergence analysis enables the treatment of FBSDEs stemming from stochastic optimal control problems. In particular, we provide a theoretical justification for the non-convergence of the deep BSDE method observed in recent literature, and present direct guidelines for when convergence can be guaranteed in practice. Our theoretical findings are supported by several numerical experiments in high-dimensional settings.},
	urldate = {2024-04-09},
	publisher = {arXiv},
	author = {Negyesi, Balint and Huang, Zhipeng and Oosterlee, Cornelis W.},
	month = mar,
	year = {2024},
	note = {arXiv:2403.18552 [cs, math]},
	keywords = {Mathematics - Numerical Analysis, Mathematics - Optimization and Control, 65C05, 65C30, 93E20},
	annote = {Comment: 25 pages, 3 figures, 1 table},
	file = {arXiv.org Snapshot:/home/bnegyesi/Zotero/storage/M8EYVJNP/2403.html:text/html;Negyesi et al_2024_Generalized convergence of the deep BSDE method.pdf:/home/bnegyesi/Insync/bnegyesi1@tudelft.nl/OneDrive Biz/Zotero/Negyesi et al_2024_Generalized convergence of the deep BSDE method.pdf:application/pdf},
}

@article{zhao_new_2014,
	title = {New {Kinds} of {High}-{Order} {Multistep} {Schemes} for {Coupled} {Forward} {Backward} {Stochastic} {Differential} {Equations}},
	volume = {36},
	issn = {1064-8275},
	url = {https://epubs.siam.org/doi/abs/10.1137/130941274},
	doi = {10.1137/130941274},
	abstract = {In this paper we propose a stable multistep scheme on time-space grids for solving backward stochastic differential equations. In our scheme, the integrands, which are conditional mathematical expectations derived from the original equations, are approximated by using Lagrange interpolating polynomials with values of the integrands at multiple time levels. They are then numerically evaluated using the Gauss–Hermite quadrature rules and polynomial interpolations on the spatial grids. Error estimates are rigorously proved for the semidiscrete version of the proposed scheme for backward stochastic differential equations with certain types of simplified generator functions. Finally, various numerical examples and comparisons with some other methods are presented to demonstrate high accuracy of the proposed multistep scheme.},
	number = {4},
	urldate = {2024-09-26},
	journal = {SIAM Journal on Scientific Computing},
	author = {Zhao, Weidong and Fu, Yu and Zhou, Tao},
	month = jan,
	year = {2014},
	note = {Publisher: Society for Industrial and Applied Mathematics},
	pages = {A1731--A1751},
	file = {Full Text PDF:/home/bnegyesi1/Zotero/storage/8VBAPABN/Zhao et al. - 2014 - New Kinds of High-Order Multistep Schemes for Coupled Forward Backward Stochastic Differential Equat.pdf:application/pdf},
}

@article{crisan_second_2014,
	title = {Second order discretization of backward {SDEs} and simulation with the cubature method},
	volume = {24},
	issn = {1050-5164, 2168-8737},
	url = {https://projecteuclid.org/journals/annals-of-applied-probability/volume-24/issue-2/Second-order-discretization-of-backward-SDEs-and-simulation-with-the/10.1214/13-AAP932.full},
	doi = {10.1214/13-AAP932},
	abstract = {We propose a second order discretization for backward stochastic differential equations (BSDEs) with possibly nonsmooth boundary data. When implemented, the discretization method requires essentially the same computational effort with the Euler scheme for BSDEs of Bouchard and Touzi [Stochastic Process. Appl. 111 (2004) 175–206] and Zhang [Ann. Appl. Probab. 14 (2004) 459–488]. However, it enjoys a second order asymptotic rate of convergence, provided that the coefficients of the equation are sufficiently smooth. In the second part of the paper, we combine this discretization with higher order cubature formulas on Wiener space to produce a fully implementable second order scheme.},
	number = {2},
	urldate = {2024-09-26},
	journal = {The Annals of Applied Probability},
	author = {Crisan, Dan and Manolarakis, Konstantinos},
	month = apr,
	year = {2014},
	note = {Publisher: Institute of Mathematical Statistics},
	keywords = {60H10, 60H35, Backward SDEs, cubature methods, numerical analysis, second order discretization},
	pages = {652--678},
	file = {Full Text PDF:/home/bnegyesi1/Zotero/storage/QT5GK5LL/Crisan and Manolarakis - 2014 - Second order discretization of backward SDEs and simulation with the cubature method.pdf:application/pdf},
}

@article{chassagneux_linear_2014,
	title = {Linear {Multistep} {Schemes} for {BSDEs}},
	volume = {52},
	issn = {0036-1429},
	url = {https://epubs.siam.org/doi/abs/10.1137/120902951},
	doi = {10.1137/120902951},
	abstract = {In this paper, we study the qualitative behavior of approximation schemes for backward stochastic differential equations (BSDEs) by introducing a new notion of numerical stability. For the Euler scheme, we provide sufficient conditions in the one-dimensional and multidimensional cases to guarantee the numerical stability. We then perform a classical Von Neumann stability analysis in the case of a linear driver \$f\$ and exhibit necessary conditions to get stability in this case. Finally, we illustrate our results with numerical applications.},
	number = {6},
	urldate = {2024-09-26},
	journal = {SIAM Journal on Numerical Analysis},
	author = {Chassagneux, Jean-François},
	month = jan,
	year = {2014},
	note = {Publisher: Society for Industrial and Applied Mathematics},
	pages = {2815--2836},
	file = {Full Text PDF:/home/bnegyesi1/Zotero/storage/NWQ7ARGQ/Chassagneux - 2014 - Linear Multistep Schemes for BSDEs.pdf:application/pdf},
}

@article{chassagneux_rungekutta_2014,
	title = {Runge–{Kutta} schemes for backward stochastic differential equations},
	volume = {24},
	issn = {1050-5164, 2168-8737},
	url = {https://projecteuclid.org/journals/annals-of-applied-probability/volume-24/issue-2/RungeKutta-schemes-for-backward-stochastic-differential-equations/10.1214/13-AAP933.full},
	doi = {10.1214/13-AAP933},
	abstract = {We study the convergence of a class of Runge–Kutta type schemes for backward stochastic differential equations (BSDEs) in a Markovian framework. The schemes belonging to the class under consideration benefit from a certain stability property. As a consequence, the overall rate of the convergence of these schemes is controlled by their local truncation error. The schemes are categorized by the number of intermediate stages implemented between consecutive partition time instances. We show that the order of the schemes matches the number \$p\$ of intermediate stages for \$p{\textbackslash}le3\$. Moreover, we show that the so-called order barrier occurs at \$p=3\$, that is, that it is not possible to construct schemes of order \$p\$ with \$p\$ stages, when \$p{\textgreater}3\$. The analysis is done under sufficient regularity on the final condition and on the coefficients of the BSDE.},
	number = {2},
	urldate = {2024-09-26},
	journal = {The Annals of Applied Probability},
	author = {Chassagneux, Jean-François and Crisan, Dan},
	month = apr,
	year = {2014},
	note = {Publisher: Institute of Mathematical Statistics},
	keywords = {60H10, 65C30, Backward SDEs, high order discretization, Runge–Kutta methods},
	pages = {679--720},
	file = {Full Text PDF:/home/bnegyesi1/Zotero/storage/SRBTX7HG/Chassagneux and Crisan - 2014 - Runge–Kutta schemes for backward stochastic differential equations.pdf:application/pdf},
}

@article{zhao_new_2006,
	title = {A {New} {Kind} of {Accurate} {Numerical} {Method} for {Backward} {Stochastic} {Differential} {Equations}},
	volume = {28},
	issn = {1064-8275},
	url = {https://epubs.siam.org/doi/abs/10.1137/05063341X},
	doi = {10.1137/05063341X},
	abstract = {In this paper we propose a stable multistep scheme on time-space grids for solving backward stochastic differential equations. In our scheme, the integrands, which are conditional mathematical expectations derived from the original equations, are approximated by using Lagrange interpolating polynomials with values of the integrands at multiple time levels. They are then numerically evaluated using the Gauss–Hermite quadrature rules and polynomial interpolations on the spatial grids. Error estimates are rigorously proved for the semidiscrete version of the proposed scheme for backward stochastic differential equations with certain types of simplified generator functions. Finally, various numerical examples and comparisons with some other methods are presented to demonstrate high accuracy of the proposed multistep scheme.},
	number = {4},
	urldate = {2024-09-26},
	journal = {SIAM Journal on Scientific Computing},
	author = {Zhao, Weidong and Chen, Lifeng and Peng, Shige},
	month = jan,
	year = {2006},
	note = {Publisher: Society for Industrial and Applied Mathematics},
	pages = {1563--1581},
	file = {Full Text PDF:/home/bnegyesi1/Zotero/storage/Y43X7UT6/Zhao et al. - 2006 - A New Kind of Accurate Numerical Method for Backward Stochastic Differential Equations.pdf:application/pdf},
}

@article{zhao_error_2009,
	title = {Error estimates of the \${\textbackslash}theta\$-scheme for backward stochastic differentialequations},
	volume = {12},
	issn = {1553-524X},
	url = {http://aimsciences.org//article/doi/10.3934/dcdsb.2009.12.905},
	doi = {10.3934/dcdsb.2009.12.905},
	language = {en},
	number = {4},
	urldate = {2024-09-26},
	journal = {Discrete \& Continuous Dynamical Systems - B},
	author = {Zhao, Weidong and Wang, Jinlei and Peng, Shige and {,School of Mathematics, Shandong University, Jinan, Shandong}},
	year = {2009},
	pages = {905--924},
	file = {PDF:/home/bnegyesi1/Zotero/storage/VKXPEEUH/Zhao et al. - 2009 - Error estimates of the \$theta\$-scheme for backward stochastic differentialequations.pdf:application/pdf},
}

@article{zhao_stable_2010,
	title = {A {Stable} {Multistep} {Scheme} for {Solving} {Backward} {Stochastic} {Differential} {Equations}},
	volume = {48},
	issn = {0036-1429},
	url = {https://epubs.siam.org/doi/abs/10.1137/09076979X},
	doi = {10.1137/09076979X},
	abstract = {In this work, we are concerned with the high-order numerical methods for coupled forward-backward stochastic differential equations (FBSDEs). Based on the FBSDEs theory, we derive two reference ordinary differential equations (ODEs) from the backward SDE, which contain the conditional expectations and their derivatives. Then, our high-order multistep schemes are obtained by carefully approximating the conditional expectations and the derivatives, in the reference ODEs. Motivated by the local property of the generator of diffusion processes, the Euler method is used to solve the forward SDE; however, it is noticed that the numerical solution of the backward SDE is still of high-order accuracy. Such results are obviously promising: on one hand, the use of the Euler method (for the forward SDE) can dramatically simplify the entire computational scheme, and on the other hand, one might be only interested in the solution of the backward SDE in many real applications such as option pricing. Several numerical experiments are presented to demonstrate the effectiveness of the numerical method.},
	number = {4},
	urldate = {2024-09-26},
	journal = {SIAM Journal on Numerical Analysis},
	author = {Zhao, Weidong and Zhang, Guannan and Ju, Lili},
	month = jan,
	year = {2010},
	note = {Publisher: Society for Industrial and Applied Mathematics},
	pages = {1369--1394},
	file = {Full Text PDF:/home/bnegyesi1/Zotero/storage/SPBDF6GW/Zhao et al. - 2010 - A Stable Multistep Scheme for Solving Backward Stochastic Differential Equations.pdf:application/pdf},
}

@article{zhao_numerical_2014,
	title = {A {Numerical} {Method} and its {Error} {Estimates} for the {Decoupled} {Forward}-{Backward} {Stochastic} {Differential} {Equations}},
	volume = {15},
	issn = {1815-2406, 1991-7120},
	url = {https://www.cambridge.org/core/journals/communications-in-computational-physics/article/abs/numerical-method-and-its-error-estimates-for-the-decoupled-forwardbackward-stochastic-differential-equations/7CA95272A60C497A1B623CB26AEF6DDF},
	doi = {10.4208/cicp.280113.190813a},
	abstract = {In this paper, a new numerical method for solving the decoupled forward-backward stochastic differential equations (FBSDEs) is proposed based on some specially derived reference equations. We rigorously analyze errors of the proposed method under general situations. Then we present error estimates for each of the specific cases when some classical numerical schemes for solving the forward SDE are taken in the method; in particular, we prove that the proposed method is second-order accurate if used together with the order-2.0 weak Taylor scheme for the SDE. Some examples are also given to numerically demonstrate the accuracy of the proposed method and verify the theoretical results.},
	language = {en},
	number = {3},
	urldate = {2024-09-26},
	journal = {Communications in Computational Physics},
	author = {Zhao, Weidong and Zhang, Wei and Ju, Lili},
	month = mar,
	year = {2014},
	keywords = {60H10, 60H35, 65C20, 65C30, Decoupled forward-backward stochastic differential equations, error estimates, numerical scheme},
	pages = {618--646},
}

@article{teng_high-order_2021,
	title = {High-order {Combined} {Multi}-step {Scheme} for {Solving} {Forward} {Backward} {Stochastic} {Differential} {Equations}},
	volume = {87},
	issn = {1573-7691},
	url = {https://doi.org/10.1007/s10915-021-01505-z},
	doi = {10.1007/s10915-021-01505-z},
	abstract = {In this work, in order to obtain higher-order schemes for solving forward backward stochastic differential equations, we propose a new multi-step scheme by adopting the high-order multi-step method in Zhao et al. (SIAM J. Sci. Comput., 36(4): A1731-A1751, 2014) with the combination technique. Two reference ordinary differential equations containing the conditional expectations and their derivatives are derived from the backward component. These derivatives are approximated by using the finite difference methods with multi-step combinations. The resulting scheme is a semi-discretization in the temporal direction involving conditional expectations, which are solved by using the Gaussian quadrature rules and polynomial interpolations on the spatial grids. Our new proposed multi-step scheme allows for higher convergence rate up to ninth order, and are more efficient. Finally, we provide a numerical illustration of the convergence of the proposed method.},
	language = {en},
	number = {3},
	urldate = {2024-09-26},
	journal = {Journal of Scientific Computing},
	author = {Teng, Long and Zhao, Weidong},
	month = apr,
	year = {2021},
	keywords = {60H10, 60H35, 65C20, Finite difference method, Forward backward stochastic differential equations, Gauss-Hermite quadrature rule, Multi-step scheme, Time-space grid},
	pages = {81},
	file = {Full Text PDF:/home/bnegyesi1/Zotero/storage/3HKDERLB/Teng and Zhao - 2021 - High-order Combined Multi-step Scheme for Solving Forward Backward Stochastic Differential Equations.pdf:application/pdf},
}

@article{junike_precise_2022,
	title = {Precise option pricing by the {COS} method—{How} to choose the truncation range},
	volume = {421},
	issn = {0096-3003},
	url = {https://www.sciencedirect.com/science/article/pii/S0096300322000212},
	doi = {10.1016/j.amc.2022.126935},
	abstract = {The Fourier cosine expansion (COS) method is used for pricing European options numerically very fast. To apply the COS method, a truncation range for the density of the log-returns need to be provided. Using Markov’s inequality, we derive a new formula to obtain the truncation range and prove that the range is large enough to ensure convergence of the COS method within a predefined error tolerance. We also show by several examples that the classical approach to determine the truncation range by cumulants may lead to serious mispricing. Usually, the computational time of the COS method is of similar magnitude in both cases.},
	urldate = {2025-01-16},
	journal = {Applied Mathematics and Computation},
	author = {Junike, Gero and Pankrashkin, Konstantin},
	month = may,
	year = {2022},
	keywords = {COS method, Cosine expansion, Markov’s inequality, Option pricing, Truncation range},
	pages = {126935},
	file = {ScienceDirect Snapshot:/home/balint/Zotero/storage/GBISILY2/S0096300322000212.html:text/html},
}
	
\end{document}